\newcounter{a}
\else\stepcounter{a}\fi
\newcommand{\cchi}{\mbox{\raise.48ex\hbox{\,$\chi$}}}
\newcommand{\R}{\mathbb{R}}
\newcommand{\N}{\mathbb N}
\newcommand{\Z}{\mathbb Z}
\newcommand{\YT}[3]{
	\vcenter{\hbox{
			\begin{tikzpicture}[x={(0in,-#1)},y={(#1,0in)}] 
			\foreach \rowi [count=\i] in {#3} {
				\foreach \e [count=\j] in \rowi {
					\draw (\i,\j) rectangle +(-1,-1);
					\draw (\i-0.5,\j-0.5) node {$#2\e$};
				}
			}
			\end{tikzpicture}
		}}
	}
\newcommand{\SYT}[3]{
	\vcenter{\hbox{
			\begin{tikzpicture}[x={(0in,-#1)},y={(#1,0in)}] 
			\foreach \rowi [count=\i] in {#3} {
				\foreach \e [count=\j] in \rowi {
					\draw (\i,-\j) rectangle +(-1,-1);
					\draw (\i-0.5,-\j-0.5) node {$#2\e$};
				}
			}
			\end{tikzpicture}
		}}
	}
\newtheorem*{question*}{Question}
\newtheorem{theorem}{Theorem}
\newtheorem{corollary}[theorem]{Corollary}
\newtheorem{lemma}[theorem]{Lemma}
\newtheorem{proposition}[theorem]{Proposition}
\newtheorem*{remark}{Remark}
\newtheorem{example}[theorem]{Example}
\theoremstyle{definition}
\newtheorem{algorithm}[theorem]{Algorithm}
\newtheorem{definition}[theorem]{Definition}
\title{Symplectic keys and Demazure atoms in type $C$}
\author[J. M. Santos]{João Miguel Santos}
\address{CMUC, Department of Mathematics, University of Coimbra, Apartado 3008,
	3001--454 Coimbra, Portugal}\email{jmsantos@mat.uc.pt}
\keywords{Keys, Demazure atom, Demazure character and  Demazure crystal graph in type C}
\subjclass[2000]{05E05, 05E10, 17B37}
\begin{document}
	\thispagestyle{empty}

\begin{abstract}
	We compute, mimicking the Lascoux-Schützenberger type $A$ combinatorial  procedure, left and right keys for a Kashiwara-Nakashima tableau in type $C$. These symplectic keys have a similar role as the keys for semistandard Young tableaux. More precisely, our symplectic keys give a tableau criterion for the Bruhat order on the hyperoctahedral group and cosets, and describe Demazure atoms and characters in type $C$. The right and the left symplectic keys are related through the Lusztig involution. A type $C$ Schützenberger evacuation is defined to realize that involution.
\end{abstract}
\maketitle
\section{Introduction}

The irreducible characters of the general linear group $GL_n$, over $\mathbb{C}$, the Schur polynomials,
are combinatorially expressed as sums on semistandard Young tableaux with entries $\leq n$ \cite{You 52}. When restricting
to the symplectic group $Sp_{2n}$, two different types of symplectic tableaux have been proposed.  King showed that the irreducible symplectic characters, the symplectic Schur polynomials,  can be seen as a sum on a family of tableaux that are known as King tableaux \cite{King 75}, and De Concini has proposed    the ones known as De Concini tableaux \cite{DeC 79}.
Kashiwara and Nakashima \cite{Kash 95} described symplectic tableaux, which are just a variation of  De Concini tableaux, with a crystal graph structure. That crystal structure allows a plactic monoid compatible with insertion and sliding algorithms, and Robinson-Schensted type correspondence, studied by Lecouvey in terms of crystal isomorphisms \cite{Lec 02, Lec 07}. The generalization of the notion of plactic monoid for finite Cartan types was first introduced by Littelmann using his path model \cite{Litt 96}. Symplectic  Kashiwara-Nakashima tableaux are the ones that we work with, in the  corresponding ambient plactic monoid. We  however note that very recently  Lee    has endowed  King tableaux with a crystal structure \cite{Lee 19}.

Kashiwara \cite{Kash 93} and Littelmann \cite{ Litt 95} have shown that Demazure characters \cite{Dem 74}, for any Weyl group, can be lifted to certain subsets of the crystal $\mathfrak{B}^\lambda$ for a given dominant weight $\lambda$, a normal crystal with highest weight $\lambda$ \cite{BSch 17}, called Demazure crystal.
That is, a Demazure character (also known as key polynomials) is the generating function of the weights over a Demazure crystal. In type $C_n$, we consider $\mathfrak{B}^\lambda$ to be the crystal of $C_n$-Kashiwara-Nakashima tableaux of shape $\lambda$, and Demazure characters are  indexed by integer vectors in the orbit of the partition $\lambda$ under the action of the Weyl group, the hyperoctahedral group $B_n$. They are certain non symmetric Laurent polynomials, with respect to the action of the Weyl group, which can be seen as \emph{partial} symplectic characters, {\em i.e.},   sums of  a certain  portion  of  monomials in a symplectic Schur polynomial.

In type $A_{n-1}$, the Demazure crystals are certain subsets of the crystal $\mathfrak{B}^\lambda$, the crystal of all  semistandard Young tableaux of shape $\lambda$, with entries $\leq n$. Lascoux and Sch\"{u}tzenberger \cite{LasSchu 90} identified the tableaux with nested columns as key tableaux, and defined the right key map that sends tableaux to key tableaux. Their right key map gives a decomposition  of  $\mathfrak{B}^\lambda$ into non intersecting subsets $\mathfrak{U}(v)$, each containing a unique key, in bijection with the vectors $v$ in the orbit of $\lambda$, under the action of the Weyl  group, $\mathfrak{S}_n$ \cite[Theorem 3.8]{LasSchu 90}.
They called \emph{standard bases} the sum of monomial weights over $\mathfrak{U}(v)$, which, after Mason \cite{Mas 09}, are coined Demazure atoms.
The decomposition describes what  tableaux contribute to the Demazure crystal $\mathfrak{B}_v$, 
as a union of Demazure atoms,  over an interval in the Bruhat order, on the classes modulo the stabilizer of $\lambda$. This order, induced on the orbit of $\lambda$, gives $\mathfrak{B}_v=\displaystyle \biguplus_{\lambda\le u\le v} \mathfrak{U}(u)$. 


Our work has been motivated by the questions raised  in a presentation by Azenhas \cite{AzM 12}, in {\em The 69th Séminaire Lotharingien de Combinatoire}. In those questions, Azenhas identified some type $C_n$ Kashiwara-Nakashima tableaux as key tableaux, which match our identification, but it lacks a construction of the right key map, thus lacking a proof of the combinatorial description of type $C$ Demazure characters. Note also that, during the preparation of this paper, Jacon and Lecouvey informed us about their paper \cite{JaLec 20}, where they find the same key in type $C$, but their  approach is different from ours.

Inspired by the Lascoux-Schützenberger's construction of the left and right keys  of a semistandard Young tableau \cite{LasSchu 90}, we give a similar construction in type $C_n$. Our construction of the left and right keys of a Kashiwara-Nakashima tableau, in type $C_n$, is based on frank words in type $C$, that we introduce in Section \ref{SecKeys}, and Sheats symplectic \emph{jeu de taquin}. Our Theorem \ref{DemKey} is the type $C$ analogue of \cite[Theorem $3.8$]{LasSchu 90}. We also show, in Section \ref{SecLusztig}, that both keys, left and right, are related via the Schützenberger involution in type $C$, or Lusztig involution, realized here in an explicit way, using symplectic insertion or sliding operations. 

In \cite{Len 07}, using the model of alcove paths, Lenart defined an initial key and a final key, for any Lie type, related via the Lusztig involution,  which, in type $C$, have a similar behaviour to the left and right keys defined here. There is a crystal isomorphism between the alcove path model and the Kashiwara-Nakashima tableau  model in types $A$ and $C$ \cite{Len 12,LenLub 15}.  Since right an left keys in type $C$ are explicitly related through the Sch\"{u}tzenberger
involution in type $C$, or Lusztig involution, the left and right key maps in types $A$ and $C$ coincide in the aforesaid approaches or models.

The paper is organized as follows. In Section \ref{SecBruhat}, we discuss the Weyl group of type $C$, the signed permutation group $B_n$, the Bruhat order on $B_n$ and on its cosets, modulo the stabilizer of $\lambda$, the Kashiwara-Nakashima tableaux and the symplectic key tableaux. Those key tableaux are used in Proposition \ref{minimalBn} to explicitly construct the minimal length coset representatives.
We recall some results from Bjorner and Brenti's book  \cite{BB05} and Proctor \cite{Pro 81}, that lead to a tableau criterion, in theorems \ref{BruhatKey} and \ref{keycoset}, for the Bruhat order on $B_n$ and on its cosets, using the symplectic key tableaux. In Section \ref{SecRS}, we recall the Baker-Lecouvey insertion, the Sheats symplectic \textit{jeu de taquin} and use them to discuss the plactic and coplactic equivalences and the Robinson-Schensted type $C$ correspondence. These equivalences have a natural interpretation in the type $C_n$ Kashiwara crystal, for a $U_q(sp_{2n})$-module, in terms of connected components and crystal isomorphic connected components.
In Section \ref{SecKeys}, we extend the concept of frank word, in type $A$, to type $C$ and, with the help of symplectic \textit{jeu de taquin}, we present, in Theorem \ref{rightkeymap}, our  right and left key maps. Using the right key map, we describe the tableaux that contribute to a Demazure atom and to a Demazure crystal, which is our main result, Theorem \ref{DemKey}. In Section \ref{SecLusztig}, we develop a type $C$ evacuation within the plactic monoid, an analogue of the $J$-operation discussed by Schützenberger for semistandard Young tableaux in \cite{Schu 76}. This is an explicit realization of Lusztig involution using insertion and sliding operations in type $C$. Proposition \ref{key&Lusz} shows that the evacuation of the right key of a Kashiwara-Nakashima tableau is the left key of the evacuation of the same tableau. 

Note: An extended abstract of part of this work was accepted in the conference FPSAC 2020 \cite{San20}.

\section{Weyl group of type C, Bruhat order and symplectic key \\ tableaux}\label{SecBruhat}

Fix $n\in \N_{>0}$. Define the sets $[n]=\{1, \dots, n\}$ and $[\pm n]=\{1,\dots, n, \overline{n}, \dots, \overline{1}\}$ where $\overline{i}$ is just another way of writing $-i$. In the second set we will consider the following order of its elements: $1<\dots< n< \overline{n}< \dots< \overline{1}$ instead of the usual order.

Consider the group $B_n$, with generators $s_i,\, 1\leq i\leq n$, having the following presentation, regarding the relations among the generators,
\begin{align*}
	B_n := \langle s_1, \dots, s_{n}\mid&s_i^2 = 1, \, 1 \le i \le n;\; (s_is_{i+1})^3 = 1, 1 \le i \le n - 2;\;(s_{n-1}s_n)^4=1;\\&(s_is_j)^2 = 1, \,1 \le i < j \le n,\, |i - j| > 1\rangle,
\end{align*}
known as hyperoctahedral group or signed symmetric group. This group is a Coxeter group and we consider the (strong) Bruhat order on its elements \cite{BB05}. 
The elements of $B_n$ can be seen as odd bijective maps from $[\pm n]$ to itself, i.e., for all $\sigma\in B_n$ we have $\sigma(i)=-\sigma(-i),\, i\in [\pm n]$. The subgroup with the generators $s_1, \dots, s_{n-1}$ is the symmetric group $\mathfrak{S}_n$ and its elements can be seen as bijections from $[n]$ to itself. Both groups can also be seen as groups of $n\times n$ matrices. The elements of the symmetric group can be identified with the permutation matrices, and if we allow the non-zero entries to be either $1$ or $-1$, we have the elements of $B_n$. Hence $B_n$ has $2^nn!$ elements. The groups $\mathfrak{S}_n$ and $B_n$ are the Weyl groups for the root systems of types $A_{n-1}$ and $C_n$, respectively.

Let $\sigma\in B_n$. 
We denote $[a_1\, a_2\,\dots \,a_n]$, where $a_i=\sigma(i)$ for $i \in [n]$, the window notation of $\sigma$, and write $\sigma=[a_1\,a_2\,\dots\,a_n]$. 
The elements of $B_n$, or $\mathfrak{S}_n$, 
act on vectors in $\Z^n$ on the left. Given a vector $v\in \Z^n$, we have that $s_i$, with $i\in [n-1]$, acts on $v$ swapping the $i$-th and the $(i+1)$-th entries, and $s_n$ acts on $v$, $s_nv$, changing the sign of the last entry. Note that the window notation of $\sigma s_i $ is obtained after applying $s_i$ to the window notation of $\sigma$, if we see it as a vector. 
Ignoring signs, $\sigma v  =(v_{\sigma^{-1}(1)}, \dots, v_{\sigma^{-1}(n)})$, with $v=(v_1, \dots, v_n)$. The $i$-th letter of $\sigma v $ changes its sign if and only if $\overline{i}$ appears in $\sigma$. Hence $\sigma v  =(sgn(\sigma^{-1}(1)) v_{\left|\sigma^{-1}(1)\right|}, \dots, sgn(\sigma^{-1}(n))v_{\left|\sigma^{-1}(n)\right|})$, where $sgn(x)=1$ if $x$ is positive and $-1$ if $x$ is negative, for $x\in [\pm n]$.
\begin{example}
	Consider $v=(1,2,3)\in \Z^3$ and  $\sigma=[2\,\overline{3}\,1]=[s_1s_3s_2(1), s_1s_3s_2(2),s_1s_3s_2(3)]$ $=s_1s_3s_2$ $\in B_3$. So  \begin{align*}
		\sigma(1,2,3)&=s_1s_3s_2(1,2,3)=s_1s_3(1,3,2)=s_1(1,3,\overline{2})=(3,1,\overline{2})\\
		&= (sgn(\sigma^{-1}(1)) v_{\left|\sigma^{-1}(1)\right|}, sgn(\sigma^{-1}(2))v_{\left|\sigma^{-1}(2)\right|}, sgn(\sigma^{-1}(3))v_{\left|\sigma^{-1}(3)\right|})\\&=(1\cdot 3, 1\cdot 1, -1\cdot 2).
	\end{align*}
\end{example}
%

\subsection{Bruhat order on $B_n$}
The \emph{length} of $\sigma\in B_n$, $\ell(\sigma)$, is the least number of generators of $B_n$ needed to go from $[1\,2\,\dots\,n]$, the identity map, to $\sigma$. Any expression of $\sigma$ as a product of $\ell(\sigma)$ generators of $B_n$ is called reduced. We say that two letters of the window notation of $\sigma$ form an inversion if the bigger letter appears first. 
%
The next proposition gives a way to compute $\ell(\sigma)$ that only requires to look at the window notation of $\sigma$. This is a variation of the length formula presented on \cite[Proposition 8.1.1]{BB05}, where the authors consider the usual ordering of the alphabet $[\pm n]$ and the generator that changes the sign of an entry of the window notation acts on the first entry instead of the last one.

\begin{proposition}\label{length} Consider $\sigma\in B_n$. Then
	\begin{align*}
		\ell(\sigma)=\#\{\text{inversions of $\sigma$}\}+\sum\limits_{\text{$\overline{i}$ appears in $\sigma$}}(n+1-i).
	\end{align*}
\end{proposition}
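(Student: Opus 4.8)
The plan is to prove the formula by induction on the right-hand side quantity, call it $L(\sigma) := \#\{\text{inversions of }\sigma\} + \sum_{\overline{i}\text{ in }\sigma}(n+1-i)$, by showing two things: (i) $L(\sigma) = 0$ if and only if $\sigma$ is the identity, and (ii) if $\sigma$ is not the identity, then there is a generator $s_k$ such that $L(\sigma s_k) = L(\sigma) - 1$. Since right-multiplication by $s_k$ corresponds (by the remark in the text) to applying $s_k$ to the window notation viewed as a vector, and since $\ell(\sigma s_k) = \ell(\sigma) \pm 1$ always holds in a Coxeter group, together (i) and (ii) give $\ell(\sigma) \le L(\sigma)$ by repeatedly descending to the identity; the matching lower bound $\ell(\sigma) \ge L(\sigma)$ follows by checking that each generator changes $L$ by exactly $\pm 1$ (so $L$ cannot drop by more than one per step) and $L(\mathrm{id}) = 0$. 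Thus $L$ is a statistic satisfying the defining property of length, hence equals $\ell$.

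For step (i): if $\sigma$ has no inversions and no barred entries in its window, the window is a strictly increasing sequence of unbarred letters from $\{1,\dots,n\}$, which forces $\sigma = [1\,2\,\dots\,n]$. For the crucial step (ii), I would do a case analysis on the window $[a_1\,\dots\,a_n]$ of $\sigma \neq \mathrm{id}$. \emph{Case A:} some $a_j$ is barred. Pick the barred entry and track what $s_n$ (acting on the last slot) and adjacent transpositions $s_i$ do. Concretely, if $a_n = \overline{m}$ is barred, then $\sigma s_n$ has window $[a_1\,\dots\,a_{n-1}\,m]$; one checks the barred-sum term drops by $(n+1-m)$ and the inversion count changes by a controlled amount — here I would instead argue more cleanly by first using adjacent transpositions to move a suitable letter into the last position so that applying $s_n$ strictly decreases $L$ by $1$, or directly exhibit the right descent. \emph{Case B:} no entry is barred but there is an inversion $a_j > a_{j+1}$; then $s_j$ on the right swaps $a_j, a_{j+1}$ and removes exactly that inversion while creating none, so $L(\sigma s_j) = L(\sigma) - 1$, exactly as in the symmetric group. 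The bookkeeping in Case A is the delicate part.

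The main obstacle I anticipate is precisely the interaction in Case A between the sign-change generator $s_n$ and the inversion count under the nonstandard order $1 < \cdots < n < \overline{n} < \cdots < \overline{1}$: changing $a_n$ from $\overline{m}$ to $m$ (or vice versa) simultaneously alters the barred-sum term and re-sorts $a_n$ relative to all the $a_i$ before it, so one must verify the net change is exactly $\pm 1$ and choose the descent direction correctly. The cleanest route is probably to reduce to \cite[Proposition 8.1.1]{BB05} by an explicit order-reversing relabeling of the alphabet together with the change of which coordinate the "sign" generator acts on: one exhibits a group automorphism (or an identification of Coxeter systems) carrying the setup of \emph{loc. cit.} to ours, checks it preserves length, and transports their formula. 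I would present the self-contained induction as the main argument and mention the reduction to \cite{BB05} as an alternative, since the text already flags this proposition as "a variation" of that result.
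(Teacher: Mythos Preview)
The paper does not actually prove this proposition: it is stated as a variation of \cite[Proposition~8.1.1]{BB05} and left at that, with the translation between the two conventions spelled out later in the Remark following Theorem~\ref{BruhatKey}. So your second suggested route---transport the Bj\"orner--Brenti formula through that order-reversing relabeling of the alphabet together with the reindexing $s_i\leftrightarrow s_{n-i}$ of generators---is precisely the argument the paper is implicitly invoking.

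Your self-contained induction also works, and the Case~A bookkeeping you flag as delicate is in fact clean; no preliminary shuffling of letters into the last slot is needed. Suppose $a_n=m$ is unbarred and apply $s_n$, turning the last entry into $\overline{m}$. The barred-sum term gains $n+1-m$. For inversions, a pair $(i,n)$ is an inversion before iff $a_i>m$ and after iff $a_i>\overline{m}$; since $\overline{m}>m$ in the given order, no new inversions appear, and those destroyed are exactly the $i<n$ with $a_i\in\{m+1,\dots,n,\overline{n},\dots,\overline{m+1}\}$. For each $j\in\{m+1,\dots,n\}$ exactly one of $j,\overline{j}$ occurs among $a_1,\dots,a_{n-1}$, and both lie in that interval, so exactly $n-m$ inversions are destroyed, giving a net change of $(n+1-m)-(n-m)=+1$. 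Symmetrically, if $a_n$ is barred then $L(\sigma s_n)=L(\sigma)-1$. Thus every generator changes $L$ by exactly $\pm1$, and whenever $\sigma\neq\mathrm{id}$ you have a right $L$-descent: either some $a_j>a_{j+1}$ (use $s_j$), or the window is increasing with some barred entry, forcing $a_n$ barred (use $s_n$). The induction then goes through directly.
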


The (signed) permutation $\sigma=[2\,\overline{3}\,1]$ has two inversions: $2,\,1$ and $\overline{3},\,1$ and $\ell(\sigma)=3$.
\pagebreak
\begin{remark}\indent
	\begin{itemize}
		\item If $\overline{i}$ does not appear in the window presentation of $\sigma$, for all $i\in [n]$, we may identify $\sigma$, in one-line notation, with $\sigma(1)\dots\sigma(n) \in \mathfrak{S}_n$ and $\ell(\sigma)=\#\{\text{inversions of }\sigma\}$ \cite[Proposition 1.5.2]{BB05}.
		\item We have that $\ell(\sigma s_i)>\ell(\sigma)$ if $i=n$ and $\sigma(n)$ is positive, or, $i\neq n$ and $\sigma(i)<\sigma(i+1)$.
	\end{itemize}
\end{remark}
The Bruhat order on the set of the elements of $B_n$ can be defined in the following way:
\begin{definition}\cite{BB05}\label{BOdef}
	Let $w=\sigma_1\dots \sigma_{\ell(w)}$, where $\sigma_i\in \{s_1,\dots,s_n\}$ are generators of $B_n$, and $u$ be two elements in $B_n$. Then $u\leq w$ in the Bruhat order if
	\begin{align*}
		\exists 1\leq i_1<i_2\dots<i_{\ell(u)}\leq \ell(w) \,\text{such that } u=\sigma_{i_1}\sigma_{i_2}\dots \sigma_{i_{\ell(u)}}.
	\end{align*}
\end{definition}
By definition, if $u\leq w$ then $\ell(u)\leq \ell(w)$, but the reverse is not true. If $\sigma(n)$ is positive and $i=n$, or, $\sigma(i)<\sigma(i+1)$ and $i\neq n$, we can also say that $\sigma s_i>\sigma$. 

The combinatorics of crystal graphs in type $C$ and the Bruhat order combinatorics on $B_n$ and cosets are  strongly related.
In subsections \ref{keyBruhat} and \ref{SsecBruhatcosets}, we give a tableau criterion for the Bruhat order on $B_n$ and on cosets, respectively. For this aim, we recall Kashiwara-Nakashima (KN) tableaux in type $C$ and define symplectic key tableaux.

\subsection{Kashiwara-Nakashima tableaux in type $C$}
This subsection focuses on the notion of symplectic tableaux introduced by
Kashiwara and Nakashima to label the vertices of the type $C$ crystal graphs \cite{NK 94}, which are a variation of the De Concini tableaux \cite{DeC 79}. (See \cite{She 99} for more details.)

A vector $\lambda=(\lambda_1,\dots, \lambda_n)\in\Z^n$ is a partition of $|\lambda|=\sum\limits_{i=1}^n \lambda_i$ if $\lambda_1\geq \lambda_2\geq\dots\geq\lambda_n\geq 0$.
The \emph{Young diagram} of shape $\lambda$ is an array of boxes, left justified, in which the $i$-th row, from top to bottom, has $\lambda_i$ boxes. We identify a partition with its Young diagram.
For example, the Young diagram of shape $\lambda=(2,2,1)$ is $\YT{0.17in}{}{
	{{},{}},
	{{},{}},
	{{}}}$.

Given $\mu$ and $\nu$ two partitions with $\nu\leq \mu$ entrywise, we write $\nu\subseteq \mu$. The Young diagram of shape $\mu/\nu$ is obtained after removing the boxes of the Young diagram of $\nu$ from the Young diagram of $\mu$.
For example, the Young diagram of shape $\mu/\nu=(2,2,1)/(1,0,0)$ is $\begin{tikzpicture}[scale=.4, baseline={([yshift=-.8ex]current bounding box.center)}]
\draw (1,0) rectangle +(-1,-1);
\draw (1,-1) rectangle +(-1,-1);
\draw (0,-2) rectangle +(-1,-1);
\draw (0,-1) rectangle +(-1,-1);
\end{tikzpicture}$\,.
\begin{definition}
	Let $\nu\subseteq \mu$ be two partitions and $A$ a completely ordered alphabet. A \emph{semistandard skew tableau} of shape $\mu/\nu$ on the alphabet $A$ is a filling of the diagram $\mu/\nu$ with letters from $A$, such that the entries are strictly increasing in each column and weakly increasing in each row. When $|\nu|=0$ then we obtain a semistandard Young tableau of shape $\mu$.
\end{definition}

Denote by $SSYT(\mu/\nu, A)$
the set of all semistandard Young skew tableaux $T$ of shape $\mu/\nu$, with entries in $A$. When $A=[n]$ we write $SSYT(\mu/\nu, n)$.

When considering tableaux with entries in $[\pm n]$, it is usual to have some extra conditions besides being semistandard. We will use a family of tableaux known as \emph{Kashiwara-Nakashima} tableaux.
From now on we consider tableaux on the alphabet $[\pm n]$.

A \emph{column} is a strictly increasing sequence of numbers in $[\pm n]$ and it is usually displayed vertically.
A column is said to be \emph{admissible} if the following \emph{one column condition} (1CC) holds for that column:

\begin{definition}[1CC]\label{1CC}
	Let $C$ be a column. The $1CC$ holds for $C$ if for all pairs $i$ and $\overline{i}$ in $C$, where $i$ is in the $a$-th row counting from the top of the column, and $\overline{i}$ in the $b$-th row counting from the bottom, we have $a+b\leq i$.
\end{definition}
If a column $C$ satisfies the $1CC$ then $C$ has at most $n$ letters.

If $1CC$ doesn't hold for $C$ we say that $C$ \emph{breaks the $1CC$ at $z$}, where $z$ is the minimal positive integer such that $z$ and $\overline{z}$ exist in $C$ and there are more than $z$ numbers in $C$ with absolute value less or equal than $z$. 

\begin{example}
	The column $\YT{0.17in}{}{
		{{1}},
		{{2}},
		{{\overline{1}}}}$ breaks the $1CC$ at $1$.
\end{example}


The following definition states conditions to when $C$ can be \emph{split}:
\begin{definition}
	Let $C$ be a column and let $I = \{z_1 > \dots > z_r\}$ be the
	set of unbarred letters $z$ such that the pair $(z, \overline{z})$ occurs in $C$. The column
	$C$ can be split when there exists a set of $r$ unbarred
	letters $J = \{t_1 > \dots > t_r\} \subseteq [n]$ such that:
	\begin{enumerate}
		\item $t_1$ is the greatest letter of $[n]$ satisfying $t_1 < z_1$,  $t_1 \not\in C$, and $\overline{t_1}\not\in C$,
		\item for $i=2, \dots, r$, we have that  $t_i$ is the greatest letter of $[n]$ satisfying $t_i <\! \min(t_{i-1},   z_i)$,  $t_i \not\in C$, and $\overline{t_i} \not\in C$.
	\end{enumerate}\end{definition}
	
	The $1CC$ holds for a column $C$ if and only if $C$ can be split  \cite[Lemma 3.1]{She 99}.
	If $C$ can be split then we define right column of $C$, $rC$, and the left column of $C$, $\ell C$, as follows:
	
	\begin{enumerate}
		\item $rC$ is the column obtained by changing in $C$,  $\overline{z_i}$ into $\overline{t_i}$ for each letter $z_i \in I $ and by reordering if necessary,	
		\item $\ell C$ is the column obtained by changing in $C$, $z_i$ into $t_i$ for each letter $z_i \in I $ and by reordering if necessary.
	\end{enumerate}
	
	If $C$ is admissible then $\ell C\leq C \leq rC$ by entrywise comparison. If $C$ doesn't have symmetric entries, then $C$ is admissible and  $\ell C= C =rC$.
	In the next definition we give conditions for a column $C$ to be \emph{coadmissible}.
	\begin{definition}
		We say that a column $C$ is coadmissible if for every pair $i$ and $\overline{i}$ on $C$, where $i$ is on the $a$-th row counting from the top of the column, and $\overline{i}$ on the $b$-th row counting from the top, then $b-a\leq n-i$.
	\end{definition}
	Note that, unlike in Definition \ref{1CC}, in the last definition $b$ is counted from the top of the column.

	Given an admissible column $C$, consider the function $\Phi$ that sends $C$ to the column of the same size in which the unbarred entries are taken from $\ell C$ and the barred entries are taken from $rC$. The column $\Phi(C)$ is a coadmissible column and the algorithm to form $\Phi(C)$ from $C$ is reversible \cite[Section 2.2]{Lec 02}. In particular, every column on the alphabet $[n]$ is simultaneously admissible and coadmissible. 
	
	\begin{example} Let
		$C=\YT{0.17in}{}{
			{{2}},
			{{3}},
			{{\overline{3}}}}$ be an admissible column. Then $\ell C=\YT{0.17in}{}{
			{{1}},
			{{2}},
			{{\overline{3}}}}$ and $rC=\YT{0.17in}{}{
			{{2}},
			{{3}},
			{{\overline{1}}}}$. So $\Phi(C)=\YT{0.17in}{}{
			{{1}},
			{{2}},
			{{\overline{1}}}}$ is coadmissible.
	\end{example}
	
	Let $T$ be a skew tableau with all of its columns admissible. The split form of a skew tableau $T$, $spl(T)$, is the skew tableau obtained after replacing each column $C$ of $T$ by the two columns $\ell C\,rC$. The tableau $spl(T)$ has double the amount of columns of $T$.
	
	\begin{definition}
		A semistandard skew tableau $T$ is a \emph{Kashiwara-Nakashima (KN) skew tableau} if its split form is a semistandard skew tableau. We define $\mathcal{KN}(\mu/\nu, n)$ to be the set of all KN tableaux of shape $\mu/\nu$ in the alphabet $[\pm n]$. When $\nu=0$ we obtain $\mathcal{KN}(\mu, n)$. 
	\end{definition}

	\begin{example} The split of the tableau 
		$T=\YT{0.17in}{}{
			{{2},{2}},
			{{3},{3}},
			{{\overline{3}}}}$ is the tableau $spl(T)=\YT{0.17in}{}{
			{{1}, {2},{2},{2}},
			{{2},{3},{3},{3}},
			{{\overline{3}},{{\overline{1}}}}}$. Hence $T\in \mathcal{KN}((2,2,1),3)$.
	\end{example}
	
	If $T$ is a tableau without symmetric entries in any of its columns, i.e., for all $i\in[n]$ and for all columns $C$ in $T$, $i$ and $\overline{i}$ do not appear simultaneously in the entries of $C$, then in order to check whether $T$ is a KN tableau it is enough to check whether $T$ is semistandard in the alphabet $\left[\pm n\right]$. In particular $SSYT(\mu/\nu, n)\subseteq \mathcal{KN}(\mu/\nu, n)$.
	
	The \emph{weight} of a word $w$ on the alphabet $[\pm n]$ is defined to be the vector $\text{wt}(w)\in \Z^n$ where the entry $i$ is obtained by adding the multiplicity of the letter $i$ and subtracting the multiplicity of the letter $\overline{i}$, for $i\in [n]$.
	If $T$ is a skew tableau, the \emph{column reading} of $T$, $cr(T)$, is
	the word read in $T$ in the Japanese way, column reading top to bottom and right to left.
	The \emph{length} of $w$ is the total number of letters in $w$. The weight of a KN tableau $T$ is the vector $\text{wt}\, T:=\text{wt}(cr(T))=(t_1-t_{\overline{1}}, t_2-t_{\overline{2}},\dots, t_n-t_{\overline{n}})\in \Z^n$, where $t_\alpha$ is the number of $\alpha$'s in $T$, with $\alpha \in [\pm n]$.
	
	\begin{example} Let
		$T=\YT{0.17in}{}{
			{{2},{2}},
			{{3},{3}},
			{{\overline{3}}}}$ and $n=3$. Then $cr(T)=23\,23\overline{3}$ and $\text{\emph{wt}}(T)=\text{\emph{wt}}(cr(T))=(0,2,1)$.\end{example}
	
	In Section \ref{BakerIns}, we recall a way to go from a word in the alphabet $[\pm n]$ to a KN tableau, the Baker-Lecouvey insertion.
	
	\subsection{Key tableaux in type $C$ and the Bruhat order on $B_n$}\label{keyBruhat}
	\begin{definition}
		A \emph{key tableau} in type $C_n$ is a KN tableau in $\mathcal{KN}(\lambda, n)$, in which the set of elements of each column is contained in the set of elements of the previous column and the letters $i$ and $\overline{i}$ do not appear simultaneously as entries, for any $i\in [n]$.
	\end{definition}
	\begin{example}
		The KN tableau $T=\YT{0.17in}{}{
			{{2},{2}},
			{{3},{\overline{1}}},
			{{\overline{1}}}}$ is a key tableau.
	\end{example}
	
	The set of key tableaux in type $A$ is the subset of the key tableaux in type $C$ consisting of the tableaux having only positive entries, hence they are SSYT for the alphabet $[n]$.
	
	Every vector $v$ of  $\Z^n$ is in the $B_n$-orbit of exactly one partition,  $\lambda_v$, which is the one obtained  by sorting the absolute values of all entries of $v$. Given a partition $\lambda\in\Z^n$, the \emph{$B_n$-orbit of $\lambda$} is the set $B_n\lambda :=\{\sigma\lambda\mid \sigma\in B_n\}$.
	For instance,	the vector $v=(1,\overline{3},0, 3, \overline{2})$ is in the $B_5$-orbit of $\lambda=(3,3,2,1,0)$.

	
	
	\begin{proposition}\label{UniKw}
		Let $\lambda$ be a partition and $v\in B_n\lambda $. There is exactly one key tableau $K(v)$ whose weight is $v$. In addition, the shape of the key tableau $K(v)$ is $\lambda$. When $v=\lambda$, $K(\lambda)$ is the only KN tableau of weight and shape $\lambda$, also called Yamanouchi tableau of shape $\lambda$.
	\end{proposition}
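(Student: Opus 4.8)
The plan is to prove the proposition by an explicit construction of $K(v)$ together with a uniqueness argument, both based on the remark that a key tableau is entirely encoded by the nested chain of subsets of $[\pm n]$ forming its columns.

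First I would set up a dictionary between key tableaux and weight vectors. If $K$ is a key tableau of shape $\mu$, let $S_1\supseteq S_2\supseteq\dots\supseteq S_{\mu_1}$ be the content sets of its columns, so $|S_j|=\mu'_j$. Because the columns are nested, each letter $\alpha\in[\pm n]$ occurs in $K$ with multiplicity $k_\alpha:=\#\{j:\alpha\in S_j\}=\max\{j:\alpha\in S_j\}$, and the hypothesis that $i$ and $\overline i$ never occur together forces $k_i\,k_{\overline i}=0$ for every $i\in[n]$. Hence $\mathrm{wt}(K)_i$ equals $k_i$, or $-k_{\overline i}$, or $0$, according to whether $i$, $\overline i$, or neither appears in $K$; in particular $|\mathrm{wt}(K)_i|$ is the number of columns of $K$ containing the unique signed letter $\pm i$ that occurs, if any. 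Since $\#\{\alpha:k_\alpha\ge j\}=|S_j|=\mu'_j$, the nonzero values among the $|\mathrm{wt}(K)_i|$, sorted decreasingly and padded with zeros, reconstruct $\mu$; thus $\mathrm{wt}(K)\in B_n\mu$ and $\mu=\lambda_{\mathrm{wt}(K)}$ is forced by the weight.

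For existence, given $v\in B_n\lambda$ I would put, for $1\le j\le\lambda_1$, $S_j=\{\,\mathrm{sgn}(v_i)\,i:|v_i|\ge j\,\}$ (reading $\mathrm{sgn}(v_i)\,i$ as $i$ when $v_i>0$ and as $\overline i$ when $v_i<0$), and let the $j$-th column of $K(v)$ be $S_j$ written in increasing order for $1<\dots<n<\overline n<\dots<\overline 1$. As sorting $(|v_1|,\dots,|v_n|)$ gives $\lambda$, we get $|S_j|=\lambda'_j$, so $K(v)$ has shape $\lambda$, the sets are nested by construction, and no column contains a pair $\{i,\overline i\}$. Columns are strictly increasing automatically, and rows weakly increase by the elementary fact that the $r$-th smallest element of a subset of a totally ordered set is at least the $r$-th smallest of the ambient set; this makes $K(v)$ semistandard, and since no column has symmetric entries each column $C$ satisfies $\ell C=C=rC$, so $spl(K(v))$ merely doubles each column and is again semistandard. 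Hence $K(v)\in\mathcal{KN}(\lambda,n)$ is a key tableau, and a direct count gives $\mathrm{wt}(K(v))=v$. Uniqueness now follows from the dictionary: a key tableau $K$ with $\mathrm{wt}(K)=v$ must have its signed letter $\pm i$ occurring exactly in the first $|v_i|$ columns, hence content sets exactly the $S_j$ above, and each column, being a set, admits a single increasing arrangement; so $K=K(v)$, and its shape is $\lambda$.

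For the Yamanouchi statement I would show that any $T\in\mathcal{KN}(\lambda,n)$ with $\mathrm{wt}(T)=\lambda$ has no barred letters: writing $m_\alpha$ for the multiplicity of $\alpha$, the identities $|\lambda|=\sum_i(m_i-m_{\overline i})$ (the weight) and $|\lambda|=\sum_i(m_i+m_{\overline i})$ (the number of boxes) force $m_{\overline i}=0$ for all $i$. Then $T\in SSYT(\lambda,n)$ with $\mathrm{wt}(T)=\lambda$, and the classical type $A$ argument applies: column-strictness puts every row-$i$ entry $\ge i$, and comparing box counts shows that rows $1,\dots,k$ contain exactly the letters $\le k$ for every $k$, which forces row $i$ to consist solely of $i$'s; thus $T=K(\lambda)$, the Yamanouchi tableau. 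I do not anticipate a real obstacle; the two points needing some care are checking that $spl(K(v))$ is semistandard (handled by the no-symmetric-entries remark giving $\ell C=C=rC$) and making the box-counting argument for the Yamanouchi uniqueness fully rigorous.
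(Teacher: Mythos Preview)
Your proof is correct and follows essentially the same approach as the paper: both construct $K(v)$ by placing the letter $\mathrm{sgn}(v_i)\,i$ in the first $|v_i|$ columns and then argue uniqueness from the nesting condition, which forces each letter into exactly those columns. Your presentation is more explicit (the dictionary via the content sets $S_j$, the check that $\ell C=C=rC$ makes $spl(K(v))$ semistandard), but the underlying idea is identical.

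The one place where you genuinely go further is the Yamanouchi claim. The paper's proof simply describes $K(\lambda)$ as having $i$'s in row $i$ and does not argue that it is the \emph{only} KN tableau of shape and weight $\lambda$; you supply this via the box-count $\sum(m_i-m_{\overline i})=|\lambda|=\sum(m_i+m_{\overline i})$ forcing $m_{\overline i}=0$, followed by the standard type~$A$ argument. This is a welcome addition rather than a different route.
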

	\begin{proof} \underline{Existence:} Given $v=(v_1, \dots, v_n)\in \Z^n$ there exists a key tableau $K$ of weight $v$ by putting in the first $\left|v_i\right|$ columns the letter $i$ if $v_i\geq 0$ or $\overline{i}$ if $v_i\leq 0$, and then sorting the columns properly. Clearly the columns of $K$ are nested and it is a KN tableau without symmetric entries, hence it is a key tableau. Also, its shape is $\lambda_v=\lambda$.

		\underline{Uniqueness:}
		Since the key tableaux don't have symmetric entries then, for all $i\in[n]$, we have that in $K$ the letter $sgn(v_i)i$ appears $\left|v_i\right|$ times in its entries. In order to the columns of $K$ be nested we have that these $\left|v_i\right|$ entries appear in the first  $\left|v_i\right|$ columns, hence we have determined exactly which letters appear in each column of $K$ and now we just have to order them correctly. So the key tableau $K$ with weight $v$ is unique. When $v=\lambda$, $K(\lambda)$ has only $i$'s in the row $i$, for $i \in [n]$.
	\end{proof}
	
	\begin{example}
		Let $v=(1,\overline{3},0, 3, \overline{2})$. Then $K(v)=~\YT{0.2in}{}{
			{{1},{4},{4}},
			{{4},{\overline{5}},{\overline{2}}},
			{{\overline{5}},{\overline{2}}},
			{{\overline{2}}}}$.
	\end{example}
	
	Hence there is a bijection between vectors in  $B_n\lambda $ and the key tableaux in type $C$ on the alphabet $[\pm n]$ with shape $\lambda$, given by the map $v\mapsto K(v)$. If $\sigma\in B_n$ we put $K(\sigma):=K(\sigma\Delta^n)$, where $\Delta^n=(n,n-1,\dots, 1)$ is the staircase partition in $\Z^n$. One has a natural bijection between $B_n$ and the $B_n$-orbit of $\Delta^n$.
	
	\begin{proposition}\label{actionv}
		If $\sigma \in B_n$ has the letter $\alpha$ in the $j$-th position then $\alpha$ appears in the first $n+1-j$ columns of the corresponding key tableau, $K(\sigma)$.
	\end{proposition}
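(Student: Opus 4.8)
The plan is to combine two ingredients already in hand: the explicit description of $K(v)$ in the proof of Proposition~\ref{UniKw}, and the closed formula for the action of $B_n$ on $\Z^n$ recalled in this section. Put $v=\sigma\Delta^n$, so $K(\sigma)=K(v)$ by definition. Recall that the construction of $K(v)$ places, for each $i\in[n]$, the letter $i$ (when $v_i\ge 0$) or the letter $\overline{i}$ (when $v_i<0$) exactly once in each of the first $|v_i|$ columns and in no other column. Hence it suffices, for the letter $\alpha=\sigma(j)$ occurring in position $j$ of the window notation of $\sigma$, to single out the index $i\in[n]$ with $\alpha\in\{i,\overline{i}\}$ and to check that the coordinate $v_i$ has absolute value $n+1-j$ and sign consistent with $\alpha$.

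First I would translate the hypothesis: ``$\alpha$ in the $j$-th position'' means $\sigma(j)=\alpha$, i.e.\ $\sigma^{-1}(\alpha)=j$ with $j\in[n]$ positive. Applying the recalled formula with input vector $\Delta^n=(n,n-1,\dots,1)$, i.e.\ $\Delta^n_k=n+1-k$, gives $v_i=sgn(\sigma^{-1}(i))\,(n+1-|\sigma^{-1}(i)|)$. If $\alpha=i>0$, then $\sigma^{-1}(i)=j>0$, so $v_i=n+1-j>0$ and, by the construction of $K(v)$, the letter $i=\alpha$ appears in exactly the first $n+1-j$ columns. If $\alpha=\overline{i}$ with $i>0$, then, using that $\sigma^{-1}$ is odd, $\sigma^{-1}(i)=-\sigma^{-1}(\overline{i})=-j<0$, so $v_i=-(n+1-j)<0$ and, again by the construction of $K(v)$, the letter $\overline{i}=\alpha$ appears in exactly the first $n+1-j$ columns. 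Since $1\le j\le n$ forces $1\le n+1-j\le n$, which is the number of columns of the shape $\Delta^n$, the phrase ``first $n+1-j$ columns'' is always meaningful, and the claim follows.

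There is no real obstacle here: the argument is pure bookkeeping of signs and absolute values. The only point needing a little care is the barred case, where one must invoke that $\sigma^{-1}$ (like $\sigma$) is an odd bijection of $[\pm n]$ in order to pass from $\sigma^{-1}(\overline{i})=j$ to $\sigma^{-1}(i)=-j$, and then correctly read the sign of $v_i$ off the formula.
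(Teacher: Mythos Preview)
Your proof is correct and follows essentially the same approach as the paper's: both compute the coordinate $v_{|\alpha|}$ of $v=\sigma\Delta^n$ via the action formula, split into the cases $\alpha$ unbarred versus barred, and then invoke the explicit construction of $K(v)$ from Proposition~\ref{UniKw}. Your version is slightly more explicit about the sign bookkeeping and the oddness of $\sigma^{-1}$, but the argument is the same.
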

	\begin{proof}
		Put $\Delta:=\Delta^n$. Remember that, ignoring signs, $\sigma\Delta  =(\Delta_{\sigma^{-1}(1)}, \dots, \Delta_{\sigma^{-1}(n)})$, with $\Delta=(n, \dots, 1)$. The $i$-th letter of $\sigma\Delta $ has negative sign if and only if $\overline{i}$ appears in $\sigma$.
		If $\alpha$ is positive, then in the position $\alpha$ of $\sigma\Delta $ will appear $\Delta_j=n+1-j$.
		If $\alpha$ is negative, then in the position $-\alpha$ will appear $\overline{\Delta_j}=\overline{n+1-j}$.
	\end{proof}
	
	We now append $0$ to the alphabet $\left[\pm n\right]$, obtaining $\left[\pm n\right]\cup \{0\}$, where $n<0<\overline{n}$, and, for all $\sigma\in B_n$, we put $\sigma(0):=0$. Given an element $\sigma \in B_n$ consider the map 
	\begin{align*}
		\left[\pm n\right]\cup \{0\}\times\left[\pm n\right]\cup \{0\}&\rightarrow \mathbb{N}_0\\
		(i,j)&\mapsto \left|\{a \leq i:  \sigma(a)\geq j\}\right|:=\sigma[i,j].
	\end{align*}
	
	This map, originally defined in \cite{BB05}, produces a table which is related to key tableaux in type $C$. See example below:
	\begin{example}
		Let $\sigma =[\overline{3}\,\overline{1}\,2\,4]$. Then $\sigma(4,3,2,1)=(\overline{3},2,\overline{4},1)$ and
		
		$K(\sigma)=~\YT{0.2in}{}{
			{{2},{2},{\overline{3}},{\overline{3}}},
			{{4},{\overline{3}},{\overline{1}}},
			{{\overline{3}},{\overline{1}}},
			{{\overline{1}}},}$

		The family of numbers $\sigma [i,j]$ where $(i,j)\in [\pm n]\cup \{0\}\times [\pm n]\cup \{0\}$ originates the following table, where $i$ indexes the columns, left to right, and $j$ indexes the rows, top to bottom. We add a row of zeros at the bottom for convenience:
		
		\begin{table}[h]
			\begin{tabular}{llllllllll}
				& 1                      & 2                      & 3                      & 4   &0	& $\overline{4}$                      & $\overline{3}$                      &$\overline{2}$                      &$\overline{1}$                  \\ \cline{2-10} 
				\multicolumn{1}{l|}{1} & \multicolumn{1}{l|}{1} & \multicolumn{1}{l|}{2} & \multicolumn{1}{l|}{3} & \multicolumn{1}{l|}{4}& \multicolumn{1}{l|}{5} & \multicolumn{1}{l|}{6} & \multicolumn{1}{l|}{7} & \multicolumn{1}{l|}{8}& \multicolumn{1}{l|}{9} \\ \cline{2-10} 
				\multicolumn{1}{l|}{2} & \multicolumn{1}{l|}{1} & \multicolumn{1}{l|}{2} & \multicolumn{1}{l|}{3} & \multicolumn{1}{l|}{4}& \multicolumn{1}{l|}{5} & \multicolumn{1}{l|}{6} & \multicolumn{1}{l|}{7} & \multicolumn{1}{l|}{7}& \multicolumn{1}{l|}{8} \\ \cline{2-10}
				\multicolumn{1}{l|}{3} & \multicolumn{1}{l|}{1} & \multicolumn{1}{l|}{2} & \multicolumn{1}{l|}{2} & \multicolumn{1}{l|}{3}& \multicolumn{1}{l|}{4} & \multicolumn{1}{l|}{5} & \multicolumn{1}{l|}{6} & \multicolumn{1}{l|}{6}& \multicolumn{1}{l|}{7} \\ \cline{2-10} 
				\multicolumn{1}{l|}{4} & \multicolumn{1}{l|}{1} & \multicolumn{1}{l|}{2} & \multicolumn{1}{l|}{2} & \multicolumn{1}{l|}{3}& \multicolumn{1}{l|}{4} & \multicolumn{1}{l|}{5} & \multicolumn{1}{l|}{6} & \multicolumn{1}{l|}{6}& \multicolumn{1}{l|}{6} \\ \cline{2-10}
				\multicolumn{1}{l|}{0} & \multicolumn{1}{l|}{1} & \multicolumn{1}{l|}{2} & \multicolumn{1}{l|}{2} & \multicolumn{1}{l|}{2}& \multicolumn{1}{l|}{3} & \multicolumn{1}{l|}{4} & \multicolumn{1}{l|}{5} & \multicolumn{1}{l|}{5}& \multicolumn{1}{l|}{5} \\ \cline{2-10}
				\multicolumn{1}{l|}{$\overline{4}$} & \multicolumn{1}{l|}{1} & \multicolumn{1}{l|}{2} & \multicolumn{1}{l|}{2} & \multicolumn{1}{l|}{2}& \multicolumn{1}{l|}{2} & \multicolumn{1}{l|}{3} & \multicolumn{1}{l|}{4} & \multicolumn{1}{l|}{4}& \multicolumn{1}{l|}{4} \\ \cline{2-10}
				\multicolumn{1}{l|}{$\overline{3}$} & \multicolumn{1}{l|}{1} & \multicolumn{1}{l|}{2} & \multicolumn{1}{l|}{2} & \multicolumn{1}{l|}{2}& \multicolumn{1}{l|}{2} & \multicolumn{1}{l|}{2} & \multicolumn{1}{l|}{3} & \multicolumn{1}{l|}{3}& \multicolumn{1}{l|}{3} \\ \cline{2-10}
				\multicolumn{1}{l|}{$\overline{2}$} & \multicolumn{1}{l|}{0} & \multicolumn{1}{l|}{1} & \multicolumn{1}{l|}{1} & \multicolumn{1}{l|}{1}& \multicolumn{1}{l|}{1} & \multicolumn{1}{l|}{1} & \multicolumn{1}{l|}{2} & \multicolumn{1}{l|}{2}& \multicolumn{1}{l|}{2} \\ \cline{2-10}
				\multicolumn{1}{l|}{$\overline{1}$} & \multicolumn{1}{l|}{0} & \multicolumn{1}{l|}{1} & \multicolumn{1}{l|}{1} & \multicolumn{1}{l|}{1}& \multicolumn{1}{l|}{1} & \multicolumn{1}{l|}{1} & \multicolumn{1}{l|}{1} & \multicolumn{1}{l|}{1}& \multicolumn{1}{l|}{1} \\ \cline{2-10}
				\multicolumn{1}{l|}{} & \multicolumn{1}{l|}{0} & \multicolumn{1}{l|}{0} & \multicolumn{1}{l|}{0} & \multicolumn{1}{l|}{0}& \multicolumn{1}{l|}{0} & \multicolumn{1}{l|}{0} & \multicolumn{1}{l|}{0} & \multicolumn{1}{l|}{0}& \multicolumn{1}{l|}{0} \\ \cline{2-10}
			\end{tabular}
		\end{table}
		
		
		To go from the table to the key tableau note that, for $i\in [n]$, the $i$-th column of the table encodes the  $(n+1-i)$-th column of the tableau, in the sense that if we look to the the $i$-th column of the table, from bottom to top, if the entry of the table increases in one unity then the index of the row associated to that entry exists in the $(n+1-i)$-th column of the tableau. Knowing the entries in a column of a tableau, its ordering is unique. 
		The columns of the tableau constructed this way are nested because the indexes in which the column $i$ increases are $\sigma(j)$, for $j\leq i$. So the tableau taken from the table is the key tableau $K(\sigma)$. It is also possible to construct the table from the key tableau and then we only need the first $n$ columns of the table. 
		
	\end{example}
	
	
	
	

	We then have the following result:
	\begin{proposition}
		Consider $\sigma, \rho\in B_n$. $K(\sigma )\ge K(\rho)$ entrywise if and only if $\sigma [i,j]\ge \rho[i,j]$, where $i \in [n], $ and $j\in [\pm n]$.
	\end{proposition}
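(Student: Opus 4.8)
The plan is to reduce the entrywise comparison $K(\sigma)\ge K(\rho)$ to a column-by-column statement and then to identify each entry $\sigma[i,j]$ of the table as a ``tail count'' along one column of $K(\sigma)$. The essential input is Proposition~\ref{actionv}: if $\sigma(k)=\alpha$ then $\alpha$ occurs in exactly the first $n+1-k$ columns of $K(\sigma)$. Since $K(\sigma)$ has shape $\Delta^n=(n,n-1,\dots,1)$, its $c$-th column has $n+1-c$ boxes; since the $n$ letters $\sigma(1),\dots,\sigma(n)$ are distinct and a key tableau displays the entries of each column in the unique increasing order (Proposition~\ref{UniKw}), it follows that the entries of column $n+1-i$ of $K(\sigma)$ are exactly $\sigma(1),\dots,\sigma(i)$ arranged increasingly. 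Writing them top to bottom as $a_1<a_2<\cdots<a_i$, we get for every $j\in[\pm n]$
$$\#\{r\in[i]:a_r\ge j\}=\#\{a\le i:\sigma(a)\ge j\}=\sigma[i,j],$$
and likewise the top-to-bottom entries $b_1<\cdots<b_i$ of column $n+1-i$ of $K(\rho)$ satisfy $\#\{r\in[i]:b_r\ge j\}=\rho[i,j]$.

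First I would isolate the elementary fact that drives everything: for strictly increasing finite sequences $a_1<\cdots<a_m$ and $b_1<\cdots<b_m$ in a totally ordered set, one has $a_r\ge b_r$ for all $r$ if and only if $\#\{r:a_r\ge j\}\ge\#\{r:b_r\ge j\}$ for all $j$. For the forward implication, $\{r:b_r\ge j\}$ is an upper segment $\{r_0,\dots,m\}$ on which $a_r\ge b_r\ge j$, so the $a$-count is at least the $b$-count. For the converse, choosing $j=b_r$ yields $\#\{k:a_k\ge b_r\}\ge\#\{k:b_k\ge b_r\}=m-r+1$, and since the $a_k$ are increasing this can only happen if $a_r\ge b_r$.

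With these two ingredients the proposition is immediate. Both $K(\sigma)$ and $K(\rho)$ have shape $\Delta^n$, so entrywise $K(\sigma)\ge K(\rho)$ means exactly that for every column, i.e.\ for every $i\in[n]$ with the column being $c=n+1-i$, and every row $r\in[i]$, the $r$-th entry $a_r$ of that column of $K(\sigma)$ dominates the $r$-th entry $b_r$ of the corresponding column of $K(\rho)$. Applying the lemma in each column, this is equivalent to $\#\{r:a_r\ge j\}\ge\#\{r:b_r\ge j\}$ for all $j$, that is, to $\sigma[i,j]\ge\rho[i,j]$ for all $j\in[\pm n]$. Letting $i$ (equivalently $c$) run over $[n]$ yields both implications.

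The only delicate points are bookkeeping: column $c$ of the tableau corresponds to the index $i=n+1-c$ in the table, and the rows of the table are ordered with the reversed alphabet $1<\cdots<n<0<\overline n<\cdots<\overline1$ fixed throughout the paper, so one must check in that order that $\sigma[i,j]$ is indeed the number of entries of column $n+1-i$ of $K(\sigma)$ that are $\ge j$ (the value $j=0$ being redundant since $\sigma[i,0]=\sigma[i,\overline n]$). I expect no genuine obstacle beyond keeping this translation straight, as the mathematical content is just the elementary counting lemma together with the column description of $K(\sigma)$ given by Proposition~\ref{actionv}.
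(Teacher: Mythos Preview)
Your proof is correct and follows essentially the same idea the paper leaves implicit in the example preceding the proposition: column $n+1-i$ of $K(\sigma)$ consists exactly of $\{\sigma(1),\dots,\sigma(i)\}$, so $\sigma[i,j]$ counts the entries $\ge j$ in that column, and the elementary tail-count lemma converts this into entrywise comparison. The paper states the proposition without formal proof, relying on the table--tableau correspondence described in the example; your argument simply makes that correspondence and the counting step explicit.
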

	
	In \cite[Theorem 8.1.8]{BB05} it is proved that, for $\sigma, \rho \in B_n$, $\sigma \leq \rho$ in the Bruhat order if and only if $\sigma[i,j]\leq \rho[i,j]$ for all $i,j\in[\pm n]$. But the result in \cite[Theorem 8.1.7]{BB05} implies that we only need to compare $\sigma [i,j]$ and $\rho[i,j]$ for $i\in [n]$. Henceforth, we have the following criterion for the Bruhat order on $B_n$:
	
	\begin{theorem}\label{BruhatKey}
		Consider $\sigma, \rho\in B_n$. $K(\sigma )\ge K(\rho)$ entrywise if and only if $\sigma \ge \rho$ in the Bruhat order.
	\end{theorem}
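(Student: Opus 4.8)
The plan is to obtain Theorem \ref{BruhatKey} as the concatenation of two facts that are already in place by this point in the paper: the Proposition stated immediately above, which translates entrywise domination of staircase‑shaped key tableaux into domination of the arrays $\sigma[\,\cdot\,,\cdot\,]$ over the restricted range $i\in[n]$, $j\in[\pm n]$; and the Bj\"orner--Brenti tableau criterion for the Bruhat order on $B_n$, namely \cite[Theorem 8.1.8]{BB05} sharpened by \cite[Theorem 8.1.7]{BB05} to that same restricted range of indices. First I would record the chain of equivalences, being careful about the orientation of the inequalities: for $\sigma,\rho\in B_n$,
\[
K(\sigma)\ge K(\rho)\ \text{entrywise}\iff \sigma[i,j]\ge\rho[i,j]\ \text{for all }i\in[n],\,j\in[\pm n]\iff \rho\le\sigma\ \text{in the Bruhat order},
\]
where the first equivalence is the preceding Proposition and the second is \cite[Theorems 8.1.7 and 8.1.8]{BB05} (reading $\sigma[i,j]\ge\rho[i,j]$ as $\rho[i,j]\le\sigma[i,j]$); the rightmost condition is just $\sigma\ge\rho$, which is the claim.

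The one step that will require genuine attention is verifying that the Bj\"orner--Brenti criterion, which in \cite{BB05} is formulated for the \emph{usual} order on $[\pm n]$ and with the sign‑changing generator acting on the \emph{first} window entry, applies verbatim in the conventions adopted here (the order $1<\dots<n<\overline n<\dots<\overline 1$, and $s_n$ acting on the last entry). I would dispatch this in the same spirit in which the text has already been proceeding around Proposition \ref{length} and the definition of $\sigma[i,j]$: the two realizations of $B_n$ are identified by the isomorphism of Coxeter systems that reverses the order of the window positions — equivalently, relabels the simple reflections by $s_i\leftrightarrow s_{n-i}$ — which is length‑preserving, hence Bruhat‑order preserving, and which carries the array of \cite{BB05} to the array $\sigma[i,j]$ defined above after the corresponding relabeling of the pair $(i,j)$. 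Once this dictionary is fixed, \cite[Theorems 8.1.7 and 8.1.8]{BB05} yield exactly the second equivalence displayed above.

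I do not expect a substantial obstacle here: the real content has been front‑loaded into the preceding Proposition (whose proof is the table‑to‑key‑tableau reconstruction already illustrated by the worked example) and into the cited results of \cite{BB05}, so the theorem itself is essentially a one‑line deduction. The only place demanding care is the bookkeeping of conventions described in the previous paragraph, together with keeping the inequalities oriented consistently on the two sides of the ``if and only if''. A self‑contained alternative would avoid \cite{BB05} by comparing $K(\sigma)$ and $K(\rho)$ directly along a chain of covers in the Bruhat order, but this would amount to reproving the Bj\"orner--Brenti criterion and is not worth the detour.
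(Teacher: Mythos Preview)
Your proposal is correct and matches the paper's approach exactly: the paper presents Theorem \ref{BruhatKey} as the immediate concatenation of the preceding Proposition with \cite[Theorems 8.1.7 and 8.1.8]{BB05}, and handles the convention discrepancy in the Remark that follows (via the same relabeling $s_i\leftrightarrow s_{n-i}$ you identify). There is nothing to add.
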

	
	\begin{remark}
		In \cite[Chapter 8.1]{BB05} the authors use the same alphabet as here, but with the usual ordering on the integers. So, to translate the results from there to here, it is needed to apply the ordering isomorphism defined by: $i\mapsto \overline{n-i+1}$ if $i\in [n]$; $i\mapsto n+i+1$ if $i\in -[n]$; $0\mapsto 0$. Using the usual ordering, the authors give a tableau criterion for the Bruhat order in Exercise $6$, pp. $287$--$288$, which is effectively the transpose version of the tableau criterion presented here.	
		Also note that the generators used in \cite[Chapter 8.1]{BB05} are the same used here, although with different indexation. Our generator $s_i$ corresponds to the generator $s_{n-i}$ in \cite[Chapter 8.1]{BB05}, for all $i \in [n]$.
	\end{remark}
	
	\subsection{The Bruhat order on cosets of $B_n$}\label{SsecBruhatcosets}

	Consider a partition $\lambda\in \Z^n$.
	Let $W_\lambda=\{\rho\in B_n\mid \rho\lambda=\lambda\}$ be the stabilizer of $\lambda$, under the action of $B_n$. Since $\lambda$ is a partition, $W_\lambda$ is a subgroup of $B_n$ generated by some of the generators of $B_n$. Let $J\subseteq [n]$ be the set of the indices of the generators of $W_\lambda$, i.e. $W_\lambda=\langle s_j, j\in J \rangle$, and $J^c$ the complement of this set in $[n]$.
	Let $ B_n/W_\lambda=\{\sigma W_\lambda:\sigma\in B_n\}$ be the set of left cosets of $B_n$ determined by the subgroup $W_\lambda$. Each coset $\sigma W_\lambda$ returns a unique vector $v$ when acting on $\lambda$, and has a unique minimal length element $\sigma_v$, such that $v=\sigma_v\lambda$. Reciprocally, given a vector $v\in B_n\lambda$,  there is a unique minimal length element $\sigma_v\in B_n$ such that $v=\sigma_v\lambda$. We have then a bijection between the vectors in $B_n\lambda $  and the left cosets of $B_n$, determined by the subgroup $W_\lambda$, given by 
	$v\mapsto \sigma_v W_\lambda $.
	The set $J^c$ detects the minimal length coset representatives of $B_n/W_\lambda$: $\sigma$ is a minimal coset representative of $B_n/W_\lambda$ if and only if all its reduced decompositions end with a generator $s_i\in J^c$ \cite{BB05}. However key tableaux, $K(v)$, $v\in B_n\lambda $, may be used
	to explicitly construct the minimal length coset representatives of  $B_n/W_\lambda$.
	Given a vector $v\in  B_n\lambda$, we show that there is a unique minimal length element $\sigma_v\in B_n$ such that $v=\sigma_v\lambda$ and we show how to obtain $\sigma_v$ explicitly. The next proposition is a generalization of what Lascoux does in \cite{Las 03} for vectors in $\N^n$ (hence $\sigma_v \in \mathfrak{S}_n$).

	
	\begin{proposition}\label{minimalBn}
		Let $v\in B_n\lambda $ and $T$ the tableau obtained after adding the column $C=\YT{0.17in}{}{
			{{1}},
			{{2}},
			{{\vdots}},
			{{n}}}$ to the left of $K(v)$. The minimal length element $\sigma\in B_n$, modulo $W_\lambda$, is given by the reading word of $T$ where entries with the same absolute value are read just once.
	\end{proposition}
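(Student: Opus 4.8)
The plan is to show that the word produced by the procedure, say $\sigma=[a_1\cdots a_n]$, is the window notation of an element of $B_n$ with $\sigma\lambda=v$ and with no right descent $s_j$ lying in $W_\lambda$; by the characterization of minimal length coset representatives quoted from \cite{BB05}, this forces $\sigma$ to be the unique minimal length element of the coset sending $\lambda$ to $v$, that is, $\sigma=\sigma_v$.

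First I would fix notation for $T$ and for its column reading. For $i\in[n]$ write $\epsilon_i i$ for the letter $i$ when $v_i\ge 0$ and $\overline i$ when $v_i<0$, and put $L_c=\#\{i\in[n]:|v_i|\ge c\}=\#\{j\in[n]:\lambda_j\ge c\}$, the length of the $c$-th column of the diagram of $\lambda$. By the construction of $K(v)$ in Proposition \ref{UniKw}, the $c$-th column of $K(v)$ (for $1\le c\le\lambda_1$) has content $\{\epsilon_i i:|v_i|\ge c\}$ listed increasingly, and for each $i$ only the single letter $\epsilon_i i$ occurs in $K(v)$; the column prepended to $K(v)$ is $C=1\,2\cdots n$. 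Since $cr(T)$ runs through the columns of $K(v)$ from right to left and then through $C$, each top to bottom, and since every absolute value occurs in $C$, the procedure records exactly $n$ letters of pairwise distinct absolute values; so $\sigma\in B_n$.

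The crux is to locate the recorded letters. I would prove that, after deleting repeats, $cr(T)$ is the concatenation of increasing runs $\beta_{\lambda_1},\beta_{\lambda_1-1},\dots,\beta_1,\beta_0$, where for $c\ge 1$ the run $\beta_c$ is the increasing list of the letters $\epsilon_i i$ with $|v_i|=c$ --- a letter with $|v_i|>c$ having already been recorded when the earlier column $|v_i|$ was read --- and $\beta_0$ is the increasing list of the letters $i$ with $v_i=0$, contributed by $C$. Counting cardinalities, $\beta_c$ occupies positions $L_{c+1}+1,\dots,L_c$ (with the convention $L_{\lambda_1+1}=0$) and $\beta_0$ occupies positions $L_1+1,\dots,n$. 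Since $\lambda$ is weakly decreasing and $L_c=\#\{j:\lambda_j\ge c\}$, one has $\lambda_j=c$ precisely for $L_{c+1}<j\le L_c$, and $\lambda_j=0$ for $j>L_1$. Hence the letter $a_j$ at position $j$ has absolute value some $i$ with $|v_i|=\lambda_j$, and $a_j=\epsilon_i i$. Now using $(\sigma\lambda)_i=\mathrm{sgn}(\sigma^{-1}(i))\,\lambda_{|\sigma^{-1}(i)|}$: if $v_i\ge 0$ then $a_j=i$, so $\sigma^{-1}(i)=j$ and $(\sigma\lambda)_i=\lambda_j=|v_i|=v_i$; if $v_i<0$ then $a_j=\overline i$, so $\sigma^{-1}(i)=-j$ and $(\sigma\lambda)_i=-\lambda_j=-|v_i|=v_i$. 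Therefore $\sigma\lambda=v$.

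Finally, minimality. We have $s_j\in W_\lambda$ exactly when $j<n$ and $\lambda_j=\lambda_{j+1}$, or $j=n$ and $\lambda_n=0$. In the first case positions $j$ and $j+1$ lie in the same increasing run $\beta_{\lambda_j}$, so $\sigma(j)<\sigma(j+1)$; in the second case $\lambda_n=0$ gives $\beta_0\ne\emptyset$ containing position $n$, whose letter is positive, so $\sigma(n)>0$. Either way $\ell(\sigma s_j)>\ell(\sigma)$ by the criterion for when right multiplication by a generator increases length noted after Proposition \ref{length}, so $\sigma$ has no right descent in $W_\lambda$ and is the minimal length element of its coset; as $\sigma\lambda=v$, that coset is the one attached to $v$, so $\sigma=\sigma_v$. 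The one delicate point is the run decomposition in the third paragraph --- showing precisely that the right-to-left column reading together with the ``record each absolute value once'' rule chops $cr(T)$ into exactly the increasing runs $\beta_c$ with the stated position ranges, and matching $L_{c+1}+1,\dots,L_c$ to the indices $j$ with $\lambda_j=c$. Granting that, the identity $\sigma\lambda=v$ and the descent computation follow directly from the action formula on $\Z^n$, the length formula of Proposition \ref{length}, and the standard facts on parabolic quotients recalled in the text.
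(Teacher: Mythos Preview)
Your proof is correct. The verification that $\sigma\lambda=v$ is essentially the paper's argument in different dress: you package it as an explicit run decomposition $\beta_{\lambda_1},\dots,\beta_1,\beta_0$ with precise position ranges, while the paper argues inductively that the $j$-th recorded letter $\alpha_j$ occurs in exactly $\lambda_j$ columns of $K(v)$; the content is identical. Where you genuinely diverge is in the minimality step. The paper argues directly from the length formula of Proposition~\ref{length}: the coset $\sigma W_\lambda$ is obtained from $\sigma$ by permuting letters within blocks where $\lambda$ is constant and by flipping signs where $\lambda_j=0$, and the column-by-column increasing reading visibly minimizes both the inversion count and the barred contributions among these. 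You instead invoke the parabolic-quotient criterion the paper quotes from \cite{BB05} just before the proposition --- $\sigma$ is minimal in $\sigma W_\lambda$ iff it has no right descent $s_j$ with $j\in J$ --- and read off the absence of such descents from your run decomposition. Your route is tidier and uses exactly the tool the paper set up but did not itself exploit; the paper's route is more self-contained, relying only on its own length formula rather than the external Coxeter-theoretic characterization.
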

	\begin{proof}
		Consider $\lambda=(\lambda_1, \dots, \lambda_n)$. Let $a_i$ be the multiplicity of $i$ in $\lambda$, for $0\leq i\leq \lambda_1$. In this proof we will write $\lambda$ as $(\lambda_1^{a_{\lambda_1}},$ $(\lambda_1-1)^{a_{\lambda_1-1}},\dots ,1^{a_1},0^{a_0})$. 
		Note that $\sum\limits_{i=0}^{\lambda_1}a_i=n$.
		
		Let $\sigma=[\alpha_1\dots\alpha_n]\in B_n$ read from $T$. Let's prove that $\alpha_j$ appears $\lambda_j$ times in $K(v)$:
		If $j=1$ then $\alpha_1$ appears in all columns of $K(v)$, because it was the first letter read and the columns are nested. Hence it appears $\lambda_1$ times. Also, the $\left|\alpha_1\right|$-th entry of $\lambda\sigma$ is $sgn(\alpha_1)\lambda_1$ which is the weight of $\left|\alpha_1\right|$ in $K(v)$.
		For $j>1$, proceeding inductively, we have that $\alpha_j$ appears in all columns of $K(v)$ not fully occupied by $\alpha_i$, with $i<j$, hence it appears $\lambda_j$ times. Also, the $\left|\alpha_j\right|$-th entry of $\lambda\sigma$ is $sgn(\alpha_j)\lambda_j$, which is the weight of $\left|\alpha_j\right|$ in $K(v)$. This makes sense even if $\lambda_j=0$.
		So we have that $\sigma\lambda=v$.
		
		We only have to see that $\sigma$ is the minimal length element of the set $\{\rho\in B_n\mid \rho\lambda=v\}$.
		The subset of elements $B_n$ that applied to $\lambda$ returns $v$ is the coset $\sigma W_\lambda $. Looking at $\sigma$, this allows us to swap $\alpha_i$ and $\alpha_j$ in $\sigma$ if $\lambda_i=\lambda_j$ and to change the sign of $\alpha_i$ if $\lambda_i=0$. Since for each column the reading to obtain $\sigma$ is ordered from the least to the biggest, we have that between these elements of $B_n$, $\sigma$ has minimal number of inversions and the letter $\alpha_j$ is unbarred if $\lambda_j=0$ because $\alpha_j$ will only be added to $\sigma$ when reading the column $C$. Hence, by Proposition \ref{length}, 	$\sigma$ is the minimal length element of $\sigma W_\lambda $.
	\end{proof}
	Given a partition $\lambda\in \Z^n$ we identify each coset $\sigma W_\lambda$ with its minimal length representative $\sigma_v$, where $v=\sigma\lambda\in B_n\lambda $. Under this identification, we now induce the Bruhat order in the $B_n$-orbit of $\lambda$ and in the coset space of $ B_n/W_\lambda$.
	\begin{definition}\label{BruhatVec}
		Consider the vectors $v, w\in B_n\lambda$, where $\lambda$ is a partition. We say that $v\leq w$, in the Bruhat order, if $\sigma_v\leq \sigma_w$.
	\end{definition}
	Let $v\in B_n\lambda $. If $K:=K(v)$ is the key tableau with weight $v$, consider the tableau $\widetilde{K}$ obtained from $K$ after erasing the minimal number of columns in order to have a tableau with no duplicated columns. Let $\widetilde{v}$ and $\widetilde{\lambda}$ be the weight and  the shape of $\widetilde{K}$, respectively. If $K$ and $K'$ are two key tableaux with shape $\lambda$, we have that $K\geq K'$ (by entrywise comparison) if and only if $\widetilde{K}\geq \widetilde{K'}$. Note that to recover $K$ from $\widetilde{K}$ we just have to know $\lambda$, and that $\widetilde{K}=K(\widetilde{v})$.
	
	It is also possible to obtain $\widetilde{v}$ from $v$ without having to look to key tableaux. If $i$ is positive, $i$ and $\overline{i}$ do not appear in $v$ but $i+1$ or $\overline{i+1}$ appears then change all appearances of $i+1$ and $\overline{i+1}$ to $i$ and $\overline{i}$, respectively, and repeat this as many times as possible, obtaining the vector $\widetilde{v}$. The set of the absolute values of its entries is a set of consecutive integers starting either in  $0$ or $1$. Hence the key tableau associated to it doesn't have repeated columns.
	
	Due to Proposition \ref{minimalBn} we have that $\sigma_{\tilde{v}}=\sigma_v$ and  $\widetilde{v}=\sigma_{\tilde{v}}\widetilde{\lambda_v}=\widetilde{\sigma_v\lambda_v}$.
	\begin{example}
		Consider $v=(1,0,\overline{3},3,\overline{5})\in B_5(5,3,3,1,0)$. Hence $K(v)=\YT{0.17in}{}{
			{{1},{4},{4},{\overline{5}},{\overline{5}}},
			{{4},{\overline{5}},{\overline{5}}},
			{{\overline{5}},{\overline{3}},{\overline{3}}},
			{{\overline{3}}}}$ has shape $\lambda=(5,3,3,1,0)$, weight $v$ and $\sigma_v=[\overline{5}\,4\,\overline{3}\,1\,2]$.
		Now note that $\widetilde{v}=(1,0,\overline{2},2,\overline{3})$, hence $K(\widetilde{v})=\YT{0.17in}{}{
			{{1},{4},{\overline{5}}},
			{{4},{\overline{5}}},
			{{\overline{5}},{\overline{3}}},
			{{\overline{3}}}}=\widetilde{K(v)}$ has shape $(3,2,2,1,0)=\widetilde{\lambda}$ and $\sigma_{\tilde{v}}=[\overline{5}\,4\,\overline{3}\,1\,2]=\sigma_v$.
	\end{example}
	
	Recall $J$ and $J^c$ defined above. Note that the set $J$ is the same for $\lambda$ and $\widetilde{\lambda}$. If $i\in J^c$ and $i=n$ then all entries of $\lambda$ are different from $0$, which implies $K(v)$ (and $\widetilde{K(v)}$) having columns of length $n$; if $i\in J^c$ and $i<n$ then $\lambda_i> \lambda_{i+1}$, hence $K(v)$ will have exactly $i$ rows with length greater then $\lambda_{i+1}$, hence $K(v)$ (and $\widetilde{K(v)}$) will have columns of length $i$.
	Since $\widetilde{K(v)}$ doesn't have repeated columns, $J^c$ have exactly the information of what column lengths exist in $\widetilde{K(v)}$.
	Theorem $3BC$ of Proctor's Ph.D. thesis \cite{Pro 81} states that given a partition $\lambda$ there is a poset isomorphism between the poset formed by the key tableaux of shape $\widetilde{\lambda}$ (ordered by entrywise comparison) and the poset formed by the Bruhat order in the vectors of the orbit $B_n\tilde{\lambda} =\{\sigma\tilde{\lambda}:\sigma\in B_n \}$.
	
	The following theorem gives a tableau criterion for the Bruhat order on vectors in the same $B_n$-orbit and for the corresponding $B_n$-coset space.
	
	\begin{theorem}\label{keycoset}
		Let $v,\,u\in B_n\lambda $. Then $\sigma_v\leq\sigma_u$ if and only if $K(v)\leq K(u)$.
	\end{theorem}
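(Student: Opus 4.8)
The plan is to reduce to the case of shapes with no repeated columns and then quote Proctor's poset isomorphism. Recall from the discussion preceding the statement that, for a key tableau $K$ of shape $\lambda$, deleting duplicated columns produces $\widetilde K$, of shape $\widetilde\lambda$, whose column lengths are exactly the elements of $J^c$ (so $\widetilde\lambda$ has pairwise distinct columns), that $\widetilde{K(v)}=K(\widetilde v)$, that two key tableaux of shape $\lambda$ satisfy $K\le K'$ entrywise if and only if $\widetilde K\le\widetilde{K'}$ entrywise (the multiplicities of the columns being fixed by $\lambda$), and that $\sigma_{\widetilde v}=\sigma_v$ with $\widetilde v=\sigma_v\widetilde\lambda$. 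Thus passing from $v$ to $\widetilde v$ changes neither the coset representative nor the truth of either side of the asserted equivalence.

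Granting these reductions, the proof is a chain of equivalences. By $\sigma_{\widetilde v}=\sigma_v$, $\sigma_{\widetilde u}=\sigma_u$ and Definition \ref{BruhatVec}, the relation $\sigma_v\le\sigma_u$ is the same as $\widetilde v\le\widetilde u$ in the Bruhat order induced on the orbit $B_n\widetilde\lambda$. Since $\widetilde\lambda$ has no repeated columns, Theorem $3BC$ of \cite{Pro 81} applies and says that the bijection $\widetilde v\mapsto K(\widetilde v)$ of Proposition \ref{UniKw} between $B_n\widetilde\lambda$ and the key tableaux of shape $\widetilde\lambda$ is an isomorphism of posets; hence $\widetilde v\le\widetilde u$ if and only if $K(\widetilde v)\le K(\widetilde u)$ entrywise. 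Finally $K(\widetilde v)=\widetilde{K(v)}$, $K(\widetilde u)=\widetilde{K(u)}$, and $\widetilde{K(v)}\le\widetilde{K(u)}$ is equivalent to $K(v)\le K(u)$. Composing the equivalences gives $\sigma_v\le\sigma_u\iff K(v)\le K(u)$.

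The step I expect to carry the weight is the interface with Theorem $3BC$ of \cite{Pro 81}: one must make sure that the partial order Proctor places on the orbit $B_n\widetilde\lambda$ is exactly the order of Definition \ref{BruhatVec}, transported from the minimal-length coset representatives, and that his isomorphism is the one realized by the key-tableau map $\widetilde v\mapsto K(\widetilde v)$. These are standard facts about the induced Bruhat order on $W/W_J$, but everything else in the argument ($\widetilde{K(v)}=K(\widetilde v)$, $\sigma_{\widetilde v}=\sigma_v$, and the fact that erasing and restoring duplicated columns is both order preserving and order reflecting) has already been recorded in the text.

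If one prefers to avoid Proctor and lean only on Theorem \ref{BruhatKey}, the alternative is to compare $K(v)=K(\sigma_v\lambda)$ with the staircase key tableau $K(\sigma_v)=K(\sigma_v\Delta^n)$. Propositions \ref{minimalBn} and \ref{actionv} show that the column of $K(v)$ of length $\ell$ (for $\ell\in J^c$) is the increasing rearrangement of $\{\sigma_v(1),\dots,\sigma_v(\ell)\}$, which is literally one of the columns of $K(\sigma_v)$; so restricting the entrywise inequality $K(\sigma_v)\le K(\sigma_u)$ to those columns gives the forward implication via Theorem \ref{BruhatKey}. The converse is the real obstacle on this route: one must show that, for minimal-length coset representatives, the prefix comparisons at the lengths in $J^c$ already force the prefix comparisons at all lengths in $[n]$, which uses that $\sigma_v$ is increasing on each maximal block where $\lambda$ is constant and is positive on the block of zeros when $\lambda_n=0$.
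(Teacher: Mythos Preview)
Your proof is correct and follows the same chain of equivalences as the paper: pass from $v$ to $\widetilde v$ (using $\sigma_{\widetilde v}=\sigma_v$ and the order-preserving/reflecting nature of erasing duplicated columns), invoke Proctor's Theorem~3BC for the shape $\widetilde\lambda$, and then restore the duplicated columns. Your discussion of the interface with Proctor and the alternative route via Theorem~\ref{BruhatKey} are not in the paper's proof, but they are sound remarks that do not affect the correctness of the main argument.
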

	\begin{proof} We have that 
		\begin{align*}
			\sigma_v\leq\sigma_u \stackrel{(1)}{\Leftrightarrow} v \leq u\stackrel{(2)}{\Leftrightarrow} \widetilde{v} \leq \widetilde{u} \stackrel{(3)}{\Leftrightarrow} K(\widetilde{v}) \leq K(\widetilde{u})\Leftrightarrow \widetilde{K(v)} \leq \widetilde{K(u)}\stackrel{(4)}{\Leftrightarrow} K(v) \leq K(u),
		\end{align*}
		where $(1)$ holds by Definition \ref{BruhatVec}. Note that in $(2)$ we also need to record $\lambda$, because it is needed in $(4)$ to recover the shape of $K(v)$ from the shape $\widetilde{K(v)}$. Finally the equivalence $(3)$ is an application of Theorem 3BC of Proctor's Ph.D. thesis \cite{Pro 81}.
	\end{proof}
	
	The following example illustrates Theorem \ref{keycoset}.
	
	\begin{example} Here we have two vectors with the respective key tableaux, ordered by entrywise comparison. The corresponding minimal coset representatives, calculated using Proposition \ref{minimalBn}, preserve this order.

		$K(3,\overline{3},0, 0, \overline{2})=\YT{0.15in}{}{
			{{1},{1},{1}},
			{{\overline{5}},{\overline{5}},{\overline{2}}},
			{{\overline{2}},{\overline{2}}}}$ $
		\le$ $ K(\overline{3},2,0,\overline{3},0)=\YT{0.15in}{}{
			{{2},{2},{\overline{4}}},
			{{\overline{4}},{\overline{4}},{\overline{1}}},
			{{\overline{1}},{\overline{1}}}}
		$ and $\sigma_v=[1\overline{2}\overline{5}34]\le \sigma_u=[\overline{4}\overline{1}235]$.
	\end{example}
	
	\section{Crystal graphs in type $C$ and symplectic plactic monoid}\label{SecRS}
	We recall two equivalence relations of words in the alphabet $\left[\pm n\right]$, the type $C$ Knuth equivalence, or (symplectic) plactic equivalence, and the (symplectic) coplactic equivalence. On the basis of these two equivalence relations is the Robinson-Schensted type $C$ correspondence, in which each word is uniquely parametrized by a KN tableau and an oscillating tableau of the same final shape. This bijection has a natural interpretation in terms of crystal connectivity and crystal isomorphic connected components in Kashiwara theory of crystal graphs \cite{BSch 17, Kash 95, Lec 02, Lec 07}. 
	For this aim and reader convenience, we begin to recall the Sheats symplectic \emph{jeu de de taquin} and Baker-Lecouvey insertion.
	\subsection{Sheats symplectic \textit{jeu de taquin}}
	The symplectic \textit{jeu de taquin} \cite{Lec 02, She 99} is a procedure that allows us to change the shape of a  KN skew tableau and eventually rectify it.  
	
	To explain how the symplectic \textit{jeu de taquin} behaves, we need to look to how it works on $2$-column KN skew tableaux. Let $T$ be a $2$-column KN skew tableau with splittable columns $C_1$ and $C_2$ such that $C_1$ has an empty cell. 
	
	Consider the tableau $spl(T)$ such that the columns $\ell C_1$ and $rC_1$ have an empty cell in the same row as $C_1$. Let $\alpha$ be the entry under the empty cell of $rC_1$ and $\beta$ to the entry right of the empty cell of $rC_1$.
	
	If $\alpha\leq \beta$ or $\beta$ does not exist,  then the empty cell of $T$ will change its position with the cell beneath it. This is a vertical slide.

	If the slide is not vertical, then it is horizontal. So we have $\alpha> \beta$ or $\alpha$ does not exist. Let $C_1'$ and $C_2'$ be the columns after the slide. In this case we have two subcases, depending on the sign of $\beta$:
	\begin{enumerate}
		\item If $\beta$ is barred we are moving a barred letter from $\ell C_2$ to $rC_1$. Remember that $\ell C_2$ has the same barred part as $C_2$ and that $rC_1$ has the same barred part as $\Phi(C_1)$. So, looking at $T$, we will have an horizontal slide of the empty cell, $C_2'=C_2\setminus \{\beta\}$ and $C_1'=\Phi^{-1}(\Phi(C_1)\cup \{\beta\})$. In a sense, $\beta$ went from $C_2$ to $\Phi(C_1)$.
		
		\item If $\beta$ is unbarred we have a similar story, but this time $\beta$ will go from $\Phi(C_2)$ to $C_1$, hence $C_1'=C_1\cup \{\beta\}$ and $C_2'=\Phi^{-1}(\Phi(C_2)\setminus \{\beta\})$. Although in this case it may happen that $C_1'$ is no longer admissible. In this case, if the 1CC breaks at $i$, we erase both $i$ and $\overline{i}$ from the column and remove a cell from the bottom and from the top column, and place all the remaining cells orderly.
		
	\end{enumerate}
	
	Eventually the empty cell will be a cell such that $\alpha$ and $\beta$ do not exist. In this case we redefine the shape to not include this cell and  the \textit{jeu de taquin} ends. A box of the diagram of shape $\lambda$ such that boxes under it and to the right are not in that shape is called an \emph{inner corner}.
	
	Given a KN skew tableau $T$ of shape $\mu/\nu$, the rectification of $T$ consists in playing the \textit{jeu de taquin} until we get a tableau of shape $\lambda$, for some partition $\lambda$. The rectification is a dynamic process, in which the inner shape, and its inner corners, gets redefined after each iteration of the \textit{jeu de taquin}.  The rectification is independent of the order in which the inner corners are filled \cite[Corollary 6.3.9]{Lec 02}. 
	
	\begin{example} Consider the KN skew tableau
		$T=\SYT{0.145in}{}{
			{{2}},
			{{3},{1}},
			{{\overline{1}},{2}}}$.
		We want to rectify it via symplectic \textit{jeu taquin}. We start by splitting and conclude that the first two slides are vertical, obtaining	$\SYT{0.17in}{}{
			{{2}, {2},{1},{1}},
			{{3},{3},{2},{2}},
			{{\overline{1}},{\overline{1}}}}$. Now we do an horizontal slide in which we take $\overline{1}$ from the second column of $T$ and add it to the coadmissible column of the first column of $T$, obtaining the tableau
		$\YT{0.145in}{}{
			{{2},{2}},
			{{3},{3}},
			{{\overline{3}}}}$.
	\end{example}
	
	\begin{remark}
		If the columns $C_1$ and $C_2$ do not have negative entries then the symplectic \textit{jeu de taquin} coincides with the \textit{jeu de taquin} known for SSYT.
	\end{remark}

	\subsection{Baker-Lecouvey insertion}\label{BakerIns}
	
	The Baker-Lecouvey insertion \cite{Ba 00,Lec 02} is a bumping algorithm that given a word in the alphabet $[\pm n]$ returns a KN tableau. Let $w$ be a word in the alphabet $\left[\pm n\right]$, we call $P(w)$ to the tableau obtained after inserting $w$. This insertion is similar to the usual column insertion for SSYT. In fact both have the same behavior unless one the following three cases happens:
	
	Suppose that we are inserting the letter $\alpha$ in the column $C$ of the KN tableau and
	\begin{enumerate}
		\item[(I)] $\overline{y} \in C$ is the smallest letter bigger or equal then $\alpha$ and $y\in C$, for some $y\in [n]$:
		there is in $C$ a maximal string of consecutive decreasing integers $y, y-1, \dots, u+1$ starting in the entry $y$ in the column $C$. Then the bump consists of replacing the entry $\overline{y}$ with $\alpha$ and subtracting $1$ to the entries $y, y-1,\dots,  u+1$. The entry $\overline{u}$ is then inserted in the next column to the right. This is known as the \emph{Type I special bump}.
		
		\item[(II)] if $\alpha=x$ and $\overline{x} \in C$, for some $x\in [n]$: there is a maximal string of consecutive decreasing entries $\overline{x}, \overline{x+1},\dots,\overline{v-1}$ starting in the entry $\overline{x}$ in $C$. Let $\beta$ be the next entry above $\overline{v-1}$. Then we have two subcases: \begin{enumerate}
			\item If $v\leq\beta\leq \overline{v+1}$ then suppose $\delta$ is the smallest entry
			in $C$ which is bigger or equal than $v$. Then this bump consists
			of deleting the entry $\overline{x}$, shifting the entries
			$\overline{x+1}, \dots,  \overline{v-1}$ down one position, inserting $\overline{v}$ where $\overline{v - 1}$ was, and replacing $\delta$
			with $v$. The entry $\delta$ is then bumped into the
			next column. This is known as the \emph{Type IIa special bump}.
			\item If $\beta \leq v-1$ or $\beta$ doesn't exist then there is a maximal string (possibly empty)
			of consecutive integers $v-1, \dots, u+1$
			above the entry $\overline{v-1}$.
			The string is not empty only when $\beta=v-1$, or else the string is empty and $u=v-1$. The bump consists
			of deleting the entry $\overline{x}$, shifting the entries
			$\overline{x + 1}, \dots,  u+1$ down one position,
			and inserting an entry $u$ where $u+1$ (or $\overline{v-1}$, if $\beta \neq v-1$) was. The entry $\overline{u}$ is then bumped into the
			next column. This is known as the \emph{Type IIb special bump}.
		\end{enumerate}
		
		\item[(III)] after adding $\alpha$ in the bottom of the column $C$, the 1CC breaks at $i$: then we will slide out the cells that contain $\overline{i}$ and $i$ via symplectic \textit{jeu de taquin}.
	\end{enumerate}   
	
	In the case III of the Baker-Lecouvey insertion we will be removing a cell from the tableau instead of adding. The length of $cr(P(w))$ might be less than the length of $w$ and the weight is preserved during Baker-Lecouvey insertion, $\text{wt}(w)=\text{wt}(P(w))$. 
	\begin{remark}
		The Baker-Lecouvey insertion is different from what we would have if we use the SSYT column insertion. However, if the word $w$ doesn't have symmetric letters, then the insertion works just like the column insertion for SSYT. Apart from this case, if we were to use SSYT column insertion, the final tableau may not even be a KN tableau. For instance, consider the word $w=2\overline{1}1$.
		The Baker-Lecouvey insertion of $w$ creates the sequence of tableaux
		$\YT{0.17in}{}{
			{{{2}}}}
		\YT{0.17in}{}{
			{{2}},
			{{\overline{1}}}}
		\YT{0.17in}{}{
			{{2},{2}},
			{{\overline{2}}}}=P(2\overline{1}1)$.		
		The SSYT column insertion of $w$ results in the tableau $\YT{0.17in}{}{
			{{1},{2}},
			{{\overline{1}}}}$, which is not a KN tableau because the first column is not admissible.
	\end{remark}
	\begin{example}\label{ExBakIns}
		Consider the word $w=23\overline{2}\overline{3}1$. We now insert all five letters of $w$, obtaining the following sequence of tableaux:	
		$\YT{0.17in}{}{
			{{{2}}}}
		\YT{0.17in}{}{
			{{2}},
			{{3}}}
		\YT{0.17in}{}{
			{{2}},
			{{3}},
			{{\overline{2}}}}
		\YT{0.17in}{}{
			{{1},{\overline{1}}},
			{{3}},
			{{\overline{3}}}}
		\YT{0.17in}{}{
			{{1},{1},{\overline{1}}},
			{{3}},
			{{\overline{3}}}}=P(w).$	
		Note that the insertion of the fourth letter, $\overline{3}$, causes a type I special bump on the first column and  the insertion of the fifth letter, $1$, causes a type IIb special bump on the second column.
	\end{example}

	
	\begin{proposition}\cite[Corollary 6.3.9]{Lec 02}\label{rr}
		Let $T\in \mathcal{KN}(\mu/\nu, n)$. Then the tableau obtained after rectifying $T$ via symplectic \textit{jeu de taquin} coincides with $P(cr(T))$. Moreover, the insertion of $w=w_1\dots w_k$, $P(w)$, is the rectification of the tableau with diagonal shape $\Delta^n/\Delta^{n-1}$ and column reading $w$.
	\end{proposition}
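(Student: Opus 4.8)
The plan is to factor the statement through two structural facts about the symplectic plactic (type $C$ Knuth) equivalence $\equiv$ on words in $[\pm n]$, which underlies Section \ref{SecRS}: (P1) a single symplectic \textit{jeu de taquin} slide applied to a KN skew tableau $T$ yields a KN skew tableau $T'$ with $cr(T')\equiv cr(T)$; and (P2) each $\equiv$-class contains the column reading of exactly one KN tableau, namely $P(w)$ for any word $w$ in the class, so that in particular $P(w)\equiv w$ for every word $w$. These are the type $C$ analogues of the two pillars of the classical theory; (P2) is Lecouvey's cross-section theorem for the symplectic plactic monoid, with Baker--Lecouvey insertion computing the canonical representative.

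Granting (P1) and (P2), both assertions follow quickly. For the first: starting from $T\in\mathcal{KN}(\mu/\nu,n)$ and performing slides until a straight shape $\lambda$ is reached, repeated use of (P1) gives a KN tableau $R$ of shape $\lambda$ with $cr(R)\equiv cr(T)$; since $P(cr(T))$ is also a KN tableau whose reading word is $\equiv cr(T)$, (P2) forces $R=P(cr(T))$. Note this simultaneously re-proves independence of the rectification from the order in which inner corners are filled, since any two outcomes are KN tableaux with $\equiv$-equivalent readings. For the ``moreover'' part: the diagonal-shape (antidiagonal strip) skew tableau $D_w$ whose successive columns, read right to left, carry $w_1,\dots,w_k$ has every column a single box, hence is trivially a KN skew tableau with $cr(D_w)=w$; applying the first assertion to $D_w$ shows its rectification equals $P(cr(D_w))=P(w)$.

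So the work is in (P1) (for (P2) one simply invokes the description of the symplectic plactic monoid from \cite{Lec 02}, matching the special bumps I, IIa, IIb and the 1CC-breaking case III of Section \ref{BakerIns} to the defining relations). For (P1) it suffices to treat a $2$-column KN skew tableau, since a slide alters only two adjacent columns of $T$ while the remaining columns contribute an unchanged prefix and suffix of the reading word, and the symplectic Knuth relations respect such concatenation. One then runs through the cases of the slide as described before the proposition: a vertical slide and the ``type $A$-like'' horizontal slide are handled exactly as in the Sch\"utzenberger argument via the split form, because on columns without symmetric pairs symplectic jdt is ordinary jdt, and ordinary jdt preserves Knuth equivalence; the genuinely new cases are the horizontal slides that move a barred letter between $C_2$ and $\Phi(C_1)$, or an unbarred letter between $\Phi(C_2)$ and $C_1$, possibly triggering a 1CC break. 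For these one checks directly that passing from $C_1\,C_2$ to $C_1'\,C_2'$ replaces the reading word by one obtained through a finite sequence of symplectic Knuth moves, reading off the split forms and using the reversibility of $\Phi$ (\cite[Section 2.2]{Lec 02}).

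The main obstacle is exactly this last verification: confirming that the special horizontal slides --- in particular the subcase where $C_1'=C_1\cup\{\beta\}$ ceases to be admissible, so that a pair $i,\overline{i}$ together with one box from the top and one from the bottom of the column is deleted --- are realized by symplectic Knuth moves on the column readings. This demands careful tracking of how the split form $\ell C\, rC$ and the map $\Phi$ interact with the barred and unbarred parts of the columns, and is the technical heart of \cite{Lec 02}; everything else in the argument is formal manipulation of (P1) and (P2).
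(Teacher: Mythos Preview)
The paper does not supply its own proof of this proposition: it is stated with a citation to \cite[Corollary 6.3.9]{Lec 02} and no argument follows. So there is nothing in the paper to compare against directly.

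Your sketch is a faithful reconstruction of the argument in the cited source. Fact (P1) is exactly \cite[Theorem 6.3.8]{Lec 02}, which the present paper also quotes later in Section \ref{SecRS} (``each symplectic \textit{jeu de taquin} slide preserves the Knuth class of the reading word of a tableau''); fact (P2) is Lecouvey's cross-section theorem for the type $C$ plactic monoid. Your reduction of (P1) to a two-column computation and your identification of the special horizontal slides (including the 1CC-breaking subcase) as the genuine technical content are both accurate, and the deduction of both assertions of the proposition from (P1) and (P2) is correct and standard. One small remark: in the ``moreover'' clause the shape should be $\Delta^k/\Delta^{k-1}$ for a word of length $k$, not $\Delta^n/\Delta^{n-1}$; this is a slip in the paper's statement rather than in your argument, and your description of $D_w$ as an antidiagonal strip of single boxes is the right object.
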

	

	In particular we have that if we insert $cr(T)$ we obtain $T$ again. This implies that during the insertion of $cr(T)$ the case III of the Baker-Lecouvey insertion cannot happen.
	In Example \ref{ExBakIns}, we may conclude that $P(23\overline{2}\overline{3}1)=P(cr(P(23\overline{2}\overline{3}1)))=P(\overline{1}113\overline{3})$.

	\subsection{Robinson-Schensted type $C$ correspondence, plactic and coplactic equivalence}
	
	Let $[\pm n]^\ast$ be the free monoid on the alphabet $[\pm n]$. The Robinson-Schensted type $C$ correspondence \cite[Theorem 5.2.2]{Lec 02} is a combinatorial bijection between words $w\in[\pm n]^\ast$ and pairs $(T, Q)$ where $T$ is a KN tableau and $Q$ is an oscillating tableau, a sequence of Young diagrams that record, by order, the shapes of the tableaux obtained while inserting $w$, whose final shape is the same as $T$. Every two consecutive shapes of the oscillating tableau differ in exactly one cell and its length is the same of $w$.
	Since both the symplectic \textit{jeu de taquin} and the Baker-Lecouvey insertion are reversible \cite{Ba 00, Lec 02}, we have that every pair $(T, Q)$, with the same final shape, is originated by exactly one word.
	The Robinson-Schensted type $C$ correspondence is the following map:
	\begin{align*}
		\left[\pm n\right]^\ast \rightarrow\bigsqcup_\lambda \mathcal{KN}(\lambda, n)\times \mathcal{O}(\lambda, n):\:w\mapsto (P(w), Q(w))
	\end{align*}
	where the union is over all partitions $\lambda$ with at most $n$ parts, and $\mathcal{O}(\lambda, n)$ is the set of all oscillating tableaux with final shape $\lambda$ and all shapes of the sequence have at most $n$ rows.

	\begin{example} In Example \ref{ExBakIns}, the word $w=23\overline{2}\overline{3}1$ is associated to the pair 
		
		$\left(
		\YT{0.17in}{}{
			{{1},{1},{\overline{1}}},
			{{3}},
			{{\overline{3}}}}, 
		\YT{0.17in}{}{
			{{{}}}}
		\YT{0.17in}{}{
			{{}},
			{{}}}
		\YT{0.17in}{}{
			{{}},
			{{}},
			{{}}}
		\YT{0.17in}{}{
			{{},{}},
			{{}},
			{{}}}
		\YT{0.17in}{}{
			{{},{},{}},
			{{}},
			{{}}}\right)$.
	\end{example}
	
	Given $w_1, w_2 \in [\pm n]^\ast$, the relation
	$w_1 \sim w_2\Leftrightarrow P(w_1)=P(w_2)$ defines an equivalence relation on $[\pm n]^\ast$ known as \emph{Knuth equivalence}. The type $C$ plactic monoid is the quotient $[\pm n]^\ast/\sim$ where each Knuth (plactic) class is uniquely identified with a KN tableau \cite{LLT 95, Lec 02}.
	The quotient $[\pm n]^\ast/\sim$ can also be described as the quotient of $[\pm n]^\ast$ by the \emph{elementary Knuth relations}:
	\begin{enumerate}
		\item[K1:] $\gamma \beta \alpha \sim \beta \gamma \alpha$, where $\gamma<\alpha\leq \beta$ and $(\beta, \gamma) \neq (\overline{x}, x)$ for all $x\in[n]$. 
		\item[K2:] $\alpha \beta \gamma \sim \alpha \gamma \beta$, where $\gamma\leq \alpha <\beta$ and  $(\beta, \gamma) \neq (\overline{x}, x)$ for all $x\in[n]$.
		\item[K3:] $y+1 \overline{y+1}\beta\sim \overline{y}y\beta$, where $y<\beta<\overline{y}$ and $y\in [n-1]$.
		\item[K4:] $\beta \overline{y}y\sim\beta y+1 \overline{y+1}$, where $y<\beta<\overline{y}$ and $y\in [n-1]$.
		\item[K5:] $w\sim w\setminus\{z,\overline{z}\}$, where $w\in[\pm n]^\ast$ and $z\in[n]$ are such that $w$ is a non-admissible column that the $1CC$ breaks at $z$, and any proper factor 
		of $w$ is an admissible column.
	\end{enumerate}

	\begin{remark} It can be proved that given a word $w\in [\pm n]^\ast$, any proper factor is admissible if and only if any proper prefix of $w$ is admissible. Thus, in order to be able to apply the Knuth relation $K5$ to a subword $w'$ of $w$, we only need to check that all proper prefixes of $w'$ are admissible, instead of all proper factors.\end{remark}
	
	When Knuth relations are applied to subwords of a word, the weight is preserved while the length may not. Knuth relations can be seen as \textit{jeu de taquin} moves on words or a diagonally shaped tableau, and each symplectic \textit{jeu de taquin} slide preserves the Knuth class of the reading word of a tableau \cite[Theorem 6.3.8]{Lec 02}.
	In Example \ref{ExBakIns} the words $23\overline{2}\overline{3}1$ and $\overline{1}113\overline{3}$ are Knuth related:
	$\overline{1}113\overline{3} \stackrel{K2}{\sim}\overline{1}131\overline{3}\stackrel{K2}{\sim}\overline{1}13\overline{3}1
	\stackrel{K3}{\sim}2\overline{2}3\overline{3}1\stackrel{K1}{\sim}23\overline{2}\overline{3}1$.
	
	
	%
	
	\subsection{Crystal graphs in type $C$ and coplactic equivalence}\label{crystal}
	

	Crystals were originally  defined for quantum groups. Here we define them axiomatically associated to a root system $\Phi$ and a weight lattice $\Lambda$ \cite{BSch 17}.
	Let $V$ be an Euclidean space with inner product $\langle\cdot,\cdot\rangle$.
	Fix a root system $\Phi$ with simple roots $\{\alpha_i \mid i\in I\}$ where $I$ is an indexing set and a weight lattice $\Lambda\supseteq\Z\text{-span}\{\alpha_i \mid i\in I\}$. A \emph{Kashiwara crystal} of type $\Phi$ is a nonempty set $\mathfrak{B}$ together with maps \cite{BSch 17}:
	\begin{align*}
		e_i, f_i:\mathfrak{B}\rightarrow \mathfrak{B}\sqcup\{0\}\quad
		\varepsilon_i, \varphi_i:\mathfrak{B}\rightarrow \Z\sqcup\{-\infty\}\quad
		\text{wt}: \mathfrak{B}\rightarrow \Lambda
	\end{align*}
	where $i\in I$ and $0\notin \mathfrak{B}$ is an auxiliary element, satisfying the following conditions:
	\begin{enumerate}
		\item if $a, b \in \mathfrak{B}$ then $e_i(a)=b\Leftrightarrow f_i(b)=a$. In this case, we also have $\text{wt}(b)=\text{wt}(a)+\alpha_i$, $\varepsilon_i(b)=\varepsilon_i(a)-1$ and $\varphi_i(b)=\varphi_i(a)+1$;
		\item  for all $a \in \mathfrak{B}$, we have $\varphi_i(a)=\langle \text{wt}(a), \frac{2\alpha_i}{\langle \alpha_i, \alpha_i\rangle}\rangle+\varepsilon_i(a)$.
	\end{enumerate}

	The crystals we deal with are the ones of a $U_q(sp_{2n})$-module. They are seminormal \cite{BSch 17}, i.e., $\varphi_i(a)=\max\{k\in \Z\geq 0\mid f_i^k(a)\neq 0\}$ and  $\varepsilon_i(a)=\max\{k\in \Z\geq 0\mid e_i^k(a)\neq 0\}$.
	An element $u\in \mathfrak{B}$ such that $e_i(u)=0$ for all $i\in I $ is called  a \emph{highest weight element}. A \emph{lowest weight element} is an element $u\in \mathfrak{B}$ such that $f_i(u)=0$ for all $i\in I $.
	We associate with $\mathfrak{B}$ a coloured oriented graph with vertices in $\mathfrak{B}$ and edges labeled by $i\in I$: $b\overset{i}{\rightarrow} b'$ iff $b'=f_i(b)$, $i\in I$, $b, b'\in\mathfrak{B}$. This is the \emph{crystal graph} of $\mathfrak{B}$.
	
	If $\mathfrak{B}$ and $\mathfrak{C}$ are two seminormal crystals associated to the same root system, the \emph{tensor product} $\mathfrak{B}\otimes \mathfrak{C}$ is also a seminormal crystal. As a set, we will have the Cartesian product $\mathfrak{B}\times\mathfrak{C}$, where its elements are denoted by $b\otimes c$, $b\in\mathfrak{B}$ and $c\in\mathfrak{C}$, with $\text{wt}(b\otimes c)=\text{wt}(b)+\text{wt}(c)$, $f_i(b\otimes c)=\begin{cases}
	f_i(b)\otimes c\; \text{if}\; \varphi_i(c)\leq\varepsilon_i(b)\\
	b\otimes f_i(c)\; \text{if}\; \varphi_i(c)>\varepsilon_i(b)
	\end{cases}$, $e_i(b\otimes c)=\begin{cases}
	e_i(b)\otimes c\; \text{if}\; \varphi_i(c)<\varepsilon_i(b)\\
	b\otimes e_i(c)\; \text{if}\; \varphi_i(c)\geq\varepsilon_i(b)
	\end{cases}$. 
	
	If $\mathfrak{B}$ and $\mathfrak{C}$ are finite, $\varphi_i(b\otimes c)=\varphi_i(b)+\max(0, \varphi_i(c)-\varepsilon_i(b))$ and  $\varepsilon_i(b\otimes c)=\varepsilon_i(b)+\max(0, \varepsilon_i(b)-\varphi_i(c))$.

	In type $C_n$, we consider $\{\boldsymbol{e_i}\}_{i=1}^n$ the canonical basis of $\R^n$.
	The root system is $\Phi_C=\{\pm\boldsymbol{e_i}\pm\boldsymbol{e_j}\mid i< j \}\cup \{\pm2\boldsymbol{e_i}\}$ and the simple roots are $\alpha_i=\boldsymbol{e_i}-\boldsymbol{e_{i+1}}$, if $i\in [n-1]$,  $\alpha_n=2\boldsymbol{e_n}$. The weight lattice $\Z^n$ has dominant weights $\lambda=(\lambda_1\ge\dots\ge\lambda_n\ge 0)$. 
	
	In type $C_{n}$, the standard crystal is seminormal and has the following  crystal graph:
	$1\xrightarrow{1} 2\xrightarrow{2} \dots\xrightarrow{n-1} n\xrightarrow{n} \overline{n}\xrightarrow{n-1}\dots \xrightarrow{1}\overline{1}$  with set
	$\mathfrak{B}=[\pm n]$, $ \text{wt}(\boxed{i})=\bf e_i$, $ \text{wt}(\boxed{\overline{i}})=-\bf e_i$. The highest weight element is the word $1$, and the highest weight $\bf e_1$. We denote the crystal by $\mathfrak{B}^{\bf e_1}$.
	
	
	
	
	The crystal $\mathfrak{B}^{\bf e_1}$ is the crystal on the words of $[\pm n]^\ast$ of a sole letter. The tensor product of crystals allows us to define the crystal $G_n=\bigoplus\limits_{k\ge 0} {(\mathfrak{B}^{\bf e_1})}^{\otimes k}$ of all words in $[\pm n]^\ast$, where the vertex $w_1\otimes \dots \otimes w_k$ is identified with the word $w_1\dots w_k\in[\pm n]^\ast$. The action of the operators $e_i$ and $f_i$ is easily given by the signature rule \cite{NK 94,Lec 02,BSch 17}. We substitute each letter $w_j$ by $+$ if $w_j\in \{i, \overline{i+1}\}$ or by $-$ if $w_j\in \{i+1, \overline{i}\}$, and erase it in any other case. Then successively erase any pair $+-$ until all the remaining letters form a word that looks like $-^a +^b$. Then $\varphi_i(w)=b$ and $\varepsilon_i(w)=a$, $e_i$ acts on the letter associated to the rightmost unbracketed $-$ (i.e., not erased), whereas $f_i$ acts on the letter $w_j$ associated to the leftmost unbracketed $+$, $f_i(w_j)=\begin{cases} 
	i+1\,\,\text{if } w_j=i \text{ and } i\neq n \\
	\overline{i}\,\,\text{if }  w_j=\overline{i+1}\\
	\overline{n}\,\,  \text{if } w_j=i \text{ and } i=n
	\end{cases}$, and the other letters of $w$ are unchanged, and $e_i$ is the inverse map. If $b=0$ then $f_i(w)=0$ and if $a=0$ then $e_i(w)=0$. 
	
	\begin{example}
		Consider $w = \overline{2}31\overline{2}2\overline{1}$ and $i = 1$.
		Using the signature rule we rewrite $w$ as $+++--$. Now we erase pairs $+-$ as many times as possible, obtaining only $+$, that came from the first $\overline{2}$ in $w$. 
		
		Given that $f_1(\overline{2})=\overline{1}$, we have that $f_1(w)=\overline{1}31\overline{2}2\overline{1}$. Also, since there are no $-$ after eliminating all $+-$ pairs, we have that $e_1(w)=0$.
	\end{example}	
	
	The  crystal $G_n$, as a graph, is the union of connected components where each component has a unique highest weight word. Two connected components are isomorphic if and only if they have the same highest weight \cite{Kash 95}. 
	Two words in $[\pm n]^\ast$ are said to be crystal connected or coplactic equivalent if and only if they belong to the same connected component of $G_n$. This means that both words are obtained  from the same highest weight word, through a  sequence  of crystal operators $f_i$, or one is obtained from  another by some sequence of crystal operators $f_i$ and $e_j$, $i, j \in [n]$.  
	
	The connected components of $G_n$ are the coplactic classes in the Robinson-Schensted correspondence that identify words with the same oscillating tableau \cite[Proposition 5.2.1]{Lec 02}.
	Also, two words $w_1,w_2\in [\pm n]^\ast$ are Knuth equivalent if and only if they occur in the same place in two isomorphic connected components of $G_n$, that is, they are obtained from two highest words with the same weight through a same sequence of crystal operators \cite{Lec 02}. Crystal operators are coplactic and commute with the \emph{jeu de taquin}. The next proposition identifies all highest weight words of $G_n$.

	%
	%
	%
	
	\begin{proposition}
		Let $w$ be a word in the alphabet $[\pm n]$. Then $w$ is a highest weight word if and only if the weight of all its prefixes (including itself) is a partition. In this case, one has that $P(w)=K(\lambda)$ the Yamanouchi tableau of shape $\lambda$, where $\lambda$ is the weight of $w$.
	\end{proposition}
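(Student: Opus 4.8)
The plan is to reduce both assertions to the signature rule for the crystal operators $e_i$ on $G_n$. Since $G_n$ is seminormal, $w$ is a highest weight word precisely when $\varepsilon_i(w)=0$ for every $i\in[n]$. Recall that $\varepsilon_i(w)$ is the number of unbracketed $-$ signs remaining after erasing all adjacent $+-$ pairs in the $i$-signature of $w$, and that a standard property of this bracketing is that this number equals the maximum, taken over all prefixes $u$ of $w$ (including the empty prefix), of the quantity $d_i(u):=\bigl(\text{number of }-\text{-letters of }u\bigr)-\bigl(\text{number of }+\text{-letters of }u\bigr)$; since the empty prefix gives $d_i=0$, this maximum is always $\ge 0$. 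Hence $e_i(w)=0$ if and only if $d_i(u)\le 0$ for every prefix $u$ of $w$.

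Next I would evaluate $d_i(u)$ in terms of $\mathrm{wt}(u)$. Writing $m_\alpha(u)$ for the multiplicity of $\alpha\in[\pm n]$ in $u$, for $i\in[n-1]$ the $+$-letters of the $i$-signature are $i$ and $\overline{i+1}$ and the $-$-letters are $i+1$ and $\overline{i}$, so $d_i(u)=\bigl(m_{i+1}(u)-m_{\overline{i+1}}(u)\bigr)-\bigl(m_i(u)-m_{\overline i}(u)\bigr)=\mathrm{wt}(u)_{i+1}-\mathrm{wt}(u)_i$; for $i=n$ the only $+$-letter is $n$ and the only $-$-letter is $\overline n$, so $d_n(u)=-\mathrm{wt}(u)_n$. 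Combining with the previous paragraph, $e_i(w)=0$ for all $i\in[n-1]$ is equivalent to $\mathrm{wt}(u)_1\ge\mathrm{wt}(u)_2\ge\dots\ge\mathrm{wt}(u)_n$ for every prefix $u$, while $e_n(w)=0$ is equivalent to $\mathrm{wt}(u)_n\ge 0$ for every prefix $u$. Putting these together, $w$ is a highest weight word if and only if $\mathrm{wt}(u)$ is a partition for every prefix $u$ of $w$, which is the first claim.

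For the second claim I would argue that insertion carries highest weight words to highest weight tableaux. The diagonally shaped tableau $D_w$ with column reading word $w$ has, via its reading word, the same crystal behaviour as $w$ itself, so it is a highest weight element; by Proposition \ref{rr} its rectification is $P(w)$, and since the crystal operators are coplactic and commute with rectification, rectification is a morphism of crystals and therefore sends $D_w$ to a highest weight element. Hence $P(w)$ is a highest weight element of $\mathcal{KN}(\mu,n)=\mathfrak{B}^\mu$, where $\mu$ is the shape of $P(w)$. This crystal is connected with highest weight $\mu$, so its unique highest weight element is the unique KN tableau of shape $\mu$ and weight $\mu$, namely the Yamanouchi tableau $K(\mu)$ of Proposition \ref{UniKw}. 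Thus $P(w)=K(\mu)$, and since Baker--Lecouvey insertion preserves the weight, $\mu=\mathrm{wt}(P(w))=\mathrm{wt}(w)=\lambda$; therefore $P(w)=K(\lambda)$.

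The only steps needing any care are the two elementary sign-count identities and the standard bracketing lemma relating $\varepsilon_i$ to the prefix maxima of $d_i$; the one conceptual point is that the word-level action of the $e_i$ descends through Baker--Lecouvey insertion to the tableau crystal, but this is exactly the coplacticity of the crystal operators together with the realization of $P(w)$ as a rectification in Proposition \ref{rr}, so I do not anticipate a genuine obstacle.
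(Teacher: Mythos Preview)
Your proof is correct and, for the equivalence, follows essentially the same route as the paper: both arguments reduce to the signature rule and the observation that an unbracketed $-$ in the $i$-signature exists if and only if some prefix has an excess of $-$-letters over $+$-letters, which you package as the prefix-maximum formula for $\varepsilon_i$ while the paper unpacks the two contrapositives directly. For the second claim the paper simply cites \cite[Proposition 3.2.6]{Lec 02}, whereas you supply the natural crystal-theoretic argument (coplacticity of the $e_i$ under rectification forces $P(w)$ to be the highest weight vertex of $\mathfrak{B}^\mu$, hence $K(\mu)$, and weight preservation gives $\mu=\lambda$); this is a perfectly good and more self-contained substitute.
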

	\begin{proof} Part "if": We will prove the contrapositive of the statement. There is a $i$ such that $e_i(w)\neq 0$. Let $k$ be the position of the leftmost $-$ of the signature rule of $w$, and consider the prefix $w_k$ with the first $k$ letters. Since the $k$-th letter of $w$ had an unbracketed $-$ in the signature rule then the last letter of $w_k$ will also be an unbracketed $-$. Hence there are more $-$ than $+$ in the signature rule of $w_k$. Let $t_\alpha$ be the number of $\alpha$ in $w_k$. We have that $t_i+t_{\overline{i+1}}<t_{i+1}+t_{\overline{i}}\Leftrightarrow t_i-t_{\overline{i}}<t_{i+1}-t_{\overline{i+1}}$, hence the weight of $w_k$ is not a partition.
		
		Part "only if": We will once again prove the contrapositive of the statement. Let $w_k$ be a prefix such that its weight is not a partition. Hence there is $i\in [n]$ such that $t_i-t_{\overline{i}}<t_{i+1}-t_{\overline{i+1}}\Leftrightarrow t_i+t_{\overline{i+1}}<t_{i+1}+t_{\overline{i}}$, hence for this $i$ there will be more $-$ than $+$ in the signature rule of $w_k$. So in the first $k$ letters of $w$ there will be more $-$ than $+$, so there is an unbracketed $-$ in $w$, hence $e_i(w)\neq 0$.
		Note that the argument works even if $i=n$. In this case we need to assume $t_{n+1}=t_{\overline{n+i}}=0$.
		
		It follows from \cite[Proposition 3.2.6]{Lec 02} that the insertion of the highest word $w$ of weight $\lambda$ is $K(\lambda)$.
	\end{proof}
	
	Choose a word $w\in [\pm n]^\ast$ such that the shape of $P(w)$ is $\lambda$. If we replace every word of its coplactic class with its insertion tableau we obtain the crystal of tableaux $\mathfrak{B}^\lambda$ that has all KN tableaux of shape $\lambda$ on the alphabet $[\pm n]$. The crystal $\mathfrak{B}^\lambda$ does not depend on the initial choice of word $w$, as long as $P(w)$ has shape $\lambda$. \cite[Theorem 6.3.8]{Lec 02}.

	\section{Right and Left Keys and Demazure atoms in type $C$}\label{SecKeys}
	In this section, we  define type $C$ frank words on the alphabet $[\pm n]$ and use them to create the right and left key maps, that send KN tableaux to key tableaux in type $C$.  The main result of this section is the type $C$ version  \cite[Theorem 3.8]{LasSchu 90}, due to Lascoux and Schützenberger, which, using the right key map, gives  a combinatorial description of  Demazure atoms in type $C$.
	\subsection{Frank words in type $C$}
	Frank words were introduced in type $A$ by Lascoux and Schützenberger in \cite{LasSchu 90}.  We start by defining frank words in the alphabet $[\pm n]$.
	
	Given a ordered alphabet and a word on that alphabet, a column of the word is a maximal factor whose letters are strictly increasing. Hence, we can  decompose a word into columns, and such decomposition is unique.
	
	\begin{definition}
		Let $w$ be word on the alphabet $[\pm n]$.
		We say that $w$ is a\emph{ type $C$ frank word} if the lengths of its columns form a multiset equal to the multiset formed by the lengths of the columns of the tableau $P(w)$.
	\end{definition}
	\begin{example} In Example \ref{ExBakIns} we have that $P(23\overline{2}\overline{3}1)=P(\overline{1}113\overline{3})=\YT{0.17in}{}{
			{{1},{1},{\overline{1}}},
			{{3}},
			{{\overline{3}}}}$. Since $23\overline{2}\overline{3}1$ and $\overline{1}113\overline{3}$ have one column of length $3$ and two columns of length $1$, they are frank words. 
	\end{example}
	
	Given a frank word $w$, the number of letters of $w$ is the same as the number of cells of $P(w)$, hence the case $3$ of the Baker-Lecouvey insertion doesn't happen.
	
	\begin{proposition}
		Let $w$ be frank word on the alphabet $[\pm n]$. All columns of $w$ are admissible.
	\end{proposition}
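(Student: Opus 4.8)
The plan is to prove the contrapositive. Decompose $w=C_1C_2\cdots C_s$ into its maximal strictly increasing factors, and suppose some $C_j$ is not admissible; I will then show that $w$ has strictly more letters than $P(w)$ has cells, so that $w$ cannot be frank, since (as noted just before the statement) a frank word has exactly as many letters as $P(w)$ has cells. The strategy is to exhibit, inside the block $C_j$, a subword to which the Knuth relation K5 applies, producing a word Knuth-equivalent to $w$ but two letters shorter.

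So, among the contiguous subwords of $C_j$ which fail the $1CC$, I would pick one, call it $C'$, of minimal length; this set is nonempty because $C_j$ itself lies in it. Being a factor of the strictly increasing word $C_j$, the word $C'$ is strictly increasing, hence a column; and by minimality of its length, every proper factor of $C'$ is an admissible column. Thus $C'$ satisfies exactly the hypotheses of K5: writing $z$ for the integer at which the $1CC$ breaks in $C'$ (so $z$ and $\overline z$ both occur in $C'$, and in particular $C'$ has length at least $2$), one has $C'\sim C'\setminus\{z,\overline z\}$. Writing $w=u\,C'\,v$ for the factorization of $w$ at this occurrence of $C'$ and applying the relation to that subword, one gets $w\sim w':=u\,(C'\setminus\{z,\overline z\})\,v$, so $P(w)=P(w')$ while $w'$ has two fewer letters than $w$.

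I would then combine this with the fact, recalled above, that Baker-Lecouvey insertion never increases length: the number of cells of $P(w)$ equals the number of cells of $P(w')$, which is at most the number of letters of $w'$, i.e.\ two fewer than the number of letters of $w$; this contradicts frankness. The one point that needs care is the choice of $C'$ together with the check that it meets the hypotheses of K5 --- that all of its proper factors are admissible columns --- but this is immediate once $C'$ has minimal length among the non-admissible contiguous subwords of $C_j$; the rest is bookkeeping. Conceptually, the whole argument is just the observation that a non-admissible column must contain a minimal non-admissible column, on which K5 exposes a redundant symmetric pair $z,\overline z$ that a frank word cannot afford to carry.
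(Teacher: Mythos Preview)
Your proof is correct and follows essentially the same approach as the paper: argue by contradiction, locate a minimal non-admissible column factor, apply K5 to shorten the word while preserving the Knuth class, and conclude that $P(w)$ has fewer cells than $w$ has letters. The only difference is that you spell out explicitly how to find the factor $C'$ (by taking a length-minimal non-admissible contiguous subword of the offending column $C_j$), whereas the paper simply asserts its existence; your version is a bit more careful about the length bookkeeping as well.
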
 
	\begin{proof}
		Suppose that the statement is false. So there is a factor of $w$ that is a non-admissible column with all of its proper factors admissible. Hence we can apply the Knuth relation $K5$, meaning that $w$ is Knuth related to a smaller word $w'$. But in this case, the number of letters of $w'$ is less then the number of cells of $P(w)=P(w')$, which is a contradiction.
	\end{proof}
	
	The following proposition is an extension of \cite[Proposition 7]{Ful 96} on SSYT to KN tableaux.
	
	\begin{proposition}\label{Fultonices}
		Let $T$ be a KN tableau of shape $\lambda$. Let $\mu/\nu$ be a skew diagram with same number of columns of each length as $T$. Then there is a unique KN skew tableau $S$ with shape $\mu/\nu$ that rectifies to $T$ and $cr(S)$ is a frank word. 
	\end{proposition}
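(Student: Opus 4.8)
The plan is to mimic Fulton's type $A$ argument (\cite[Proposition 7]{Ful 96}), replacing ordinary \emph{jeu de taquin} by Sheats symplectic \emph{jeu de taquin} throughout, and to exploit Proposition \ref{rr}, which tells us that rectification of a KN skew tableau is independent of the order in which inner corners are filled and coincides with Baker--Lecouvey insertion of the column reading. For the \textbf{existence} part I would induct on the number of columns $r$ of $\mu/\nu$ (equivalently, on the number of columns of $T$). If $r=1$ then $T$ itself is a single admissible column and $S=T$ is forced. For the inductive step, let $C$ be the rightmost column of $\mu/\nu$, of length $c$; by hypothesis $T$ has a column of length $c$, and I would peel off the rightmost column $D$ of $T$ — note $D$ has some length $d$, and using the column structure of KN tableaux one checks $T'=T\setminus D$ is a KN tableau whose column lengths match those of $\mu/\nu$ minus one column of length equal to $|C|$. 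Apply the inductive hypothesis to get a KN skew tableau $S'$ on the truncated skew shape rectifying to $T'$ with $cr(S')$ frank, then slide the leftover column into the column slot $C$ using reverse symplectic \emph{jeu de taquin}; Proposition \ref{rr} guarantees the result still rectifies to $T$, and the column-length multiset is preserved by construction, so $cr(S)$ is frank.

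For \textbf{uniqueness}, the cleanest route is again Fulton's: suppose $S_1,S_2$ are two KN skew tableaux of shape $\mu/\nu$ both rectifying to $T$ with frank column reading. Rectify $S_1$ to $T$ along a chosen sequence of inner-corner fillings; by Proposition \ref{rr} the same sequence applied to $S_2$ also yields $T$. The key point is that each symplectic \emph{jeu de taquin} slide is \emph{reversible} (stated in the excerpt, from \cite{Ba 00, Lec 02, She 99}), so running the rectification of $T$ \emph{backwards} along a fixed sequence of slides produces a unique skew tableau of the prescribed shape — hence $S_1=S_2$. The one subtlety that makes this more than a verbatim copy of type $A$ is that symplectic \emph{jeu de taquin} can \emph{delete} cells (the 1CC-breaking case, case III of Baker--Lecouvey insertion), so reversibility of the whole rectification is not automatic from reversibility of individual slides; this is exactly where frankness is used. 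Because $cr(S)$ is frank, the total number of letters equals the number of cells of $T$, so no cell is ever destroyed during rectification — case III never occurs — and therefore rectification of $S$ is a genuine bijection onto its image, which we can invert.

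The main obstacle, then, is the bookkeeping around the 1CC-breaking case: I must argue carefully that along \emph{any} rectification path from a frank skew tableau $S$ of shape $\mu/\nu$ to $T$, the cell count is constant, so that case III is never triggered, and hence every intermediate skew tableau has column-length multiset consistent with being invertible. This needs the observation that $|cr(S)| = |\mu/\nu| = |\lambda| = \#\{\text{cells of }T\}$ together with the weight-preservation and the fact that $\ell(cr(P(w)))\le \ell(w)$ with equality precisely when no case-III bump occurs. Once that is pinned down, existence follows by the sliding construction above and uniqueness by reversibility, completing the proof.
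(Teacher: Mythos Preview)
Your approach differs substantially from the paper's, and as written both halves have gaps.

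The paper does not re-run Fulton's argument in the symplectic setting. It first treats the Yamanouchi case $T = K(\lambda)$: if $S \in \mathcal{KN}(\mu/\nu, n)$ rectifies to $K(\lambda)$ with $|\mu/\nu| = |\lambda|$, weight preservation forces $\text{wt}(S) = \lambda$, and since $\sum_i \lambda_i$ equals the number of cells of $S$, every entry of $S$ must be unbarred. Hence $S$ is an ordinary semistandard skew tableau and \cite[Proposition 7]{Ful 96} applies verbatim. For a general $T$, the paper uses that symplectic \emph{jeu de taquin} commutes with the crystal operators \cite[Theorem 6.3.8]{Lec 02}: the unique $Y'_\lambda$ of shape $\mu/\nu$ rectifying to $K(\lambda)$ is transported to the desired $S$ by the same string of $f_i$'s that carries $K(\lambda)$ to $T$, and uniqueness follows because that string is invertible.

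In your existence argument the induction does not go through: peeling off the rightmost column $D$ of $T$ removes a column of the \emph{smallest} length $d$ (since $T$ has straight shape), whereas the rightmost column $C$ of $\mu/\nu$ has an arbitrary length $c$; unless $c=d$ the residual column-length multisets no longer match and the inductive hypothesis does not apply. In your uniqueness argument, reversibility of a single slide requires knowing the \emph{outer} corner at which it terminated, and that corner depends on the entries of the tableau, not just on its shape. Two tableaux $S_1,S_2$ of the same shape $\mu/\nu$, slid through the same sequence of inner corners, may vacate different outer corners along the way; reversing from $T$ along $S_1$'s outer-corner record returns $S_1$, and along $S_2$'s record returns $S_2$, so nothing forces $S_1=S_2$. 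Frankness rules out cell deletion, but it does not by itself make rectification injective on $\mathcal{KN}(\mu/\nu,n)$; the paper's crystal reduction bypasses this issue entirely.
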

	\begin{proof}
		If $T$ is a Yamanouchi tableau $K(\lambda)$ and $S\in \mathcal{KN}(\mu/\nu, n)$ rectifies to $K(\lambda)$,  then, since $S$ and $K(\lambda)$ have the same number of cells, all entries of $S$ are unbarred, hence $S$ is a semistandard skew tableau.  So, it follows from \cite[Proposition 7]{Ful 96} that $S$ exists and is unique. If $T$ is not a Yamanouchi tableau, note that $T$ is crystal connected to $K(\lambda)$ and from \cite[Theorem 6.3.8]{Lec 02} we have that the symplectic \textit{jeu de taquin} slides commutes with the action of the crystal operators. Consider $Y_\lambda'$ the only tableau on the skew-shape $\mu/\nu$ that rectifies to $Y_\lambda$, which exists due to \cite[Proposition 7]{Ful 96}. Since $S$ rectifies to $T$, which is crystal connected to $K(\lambda)$, and  $Y_\lambda'$ rectifies to $K(\lambda)$, $S$ is crystal connected to $Y_\lambda'$ and the path has same sequence of colours as the one from $T$ to $K(\lambda)$. Hence $S$ exists and is uniquely defined.
	\end{proof}
	
	\begin{corollary}\label{lastcolumn} Let $S$ be as in the previous proposition. 
		The last column of $S$ depends only on the length of that column.
	\end{corollary}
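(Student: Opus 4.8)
\textit{Sketch of proof.} The plan is to deduce this from Proposition \ref{Fultonices} and the classical type-$A$ statement for semistandard Young tableaux \cite[Proposition 7 and its corollary]{Ful 96}, following the crystal-theoretic reduction to the Yamanouchi tableau already used in the proof of Proposition \ref{Fultonices}. Fix the common length $\ell$ of the last columns; let $T$ have shape $\lambda$, and let $S$ (of shape $\mu/\nu$) and $S'$ (of shape $\mu'/\nu'$) be the skew tableaux provided by Proposition \ref{Fultonices}, with last columns $C$ and $C'$ both of length $\ell$. We must show $C=C'$.

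Consider first the Yamanouchi case $T=K(\lambda)$. By the proof of Proposition \ref{Fultonices}, $S$ and $S'$ then have only positive entries, so the claim is exactly the type-$A$ statement \cite{Ful 96}, and moreover $C=C'$ is the length-$\ell$ column $1\,2\cdots\ell$ of the key $K(\lambda)$. I would also record here that if $S^{-}$ denotes $S$ with its last column deleted, then $cr(S^{-})$ is again a frank word, so $S^{-}$ is a frank-form skew tableau and $P(S^{-})$ has column-length multiset that of $\lambda$ with one $\ell$ removed; from $K(\lambda)=P\big((1\,2\cdots\ell)\,cr(S^{-})\big)$ and the reversibility of the Baker--Lecouvey insertion it then follows that $P(S^{-})$ is forced by $\lambda$ and $\ell$ alone, and hence that $cr(S^{-})$ and $cr(S'^{-})$ are Knuth equivalent.

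For general $T$, fix — as in the proof of Proposition \ref{Fultonices} — a composite crystal operator $g=f_{i_m}\cdots f_{i_1}$ with $g\,K(\lambda)=T$; since jeu de taquin commutes with crystal operators \cite[Theorem 6.3.8]{Lec 02}, $g$ sends the Yamanouchi frank form of shape $\mu/\nu$ to $S$ and that of shape $\mu'/\nu'$ to $S'$. Decomposing each reading word as a tensor product $a\otimes b$, with $a$ the word of the last column and $b$ the rest, I would apply $g$ step by step to both tableaux at once: by the tensor product rule, whether $f_{i_t}$ acts on $a$ or on $b$ is decided by $\varepsilon_{i_t}(a)$ and $\varphi_{i_t}(b)$, and when it acts on $a$ the outcome depends only on $a$; since Knuth-equivalent words have equal $\varepsilon_i$ and $\varphi_i$ (a crystal isomorphism preserves them) and crystal operators commute with $P$, an induction on $t$ shows that the two ``$a$'' factors stay equal and the two ``$b$'' factors stay Knuth equivalent along the whole path. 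Evaluating at the end of the path yields $C=C'$.

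The step I expect to be the main obstacle is the base-case input that launches the induction: proving that $cr(S^{-})$ is a frank word and that $P(S^{-})$ — equivalently the ``$b$''-factor at $t=0$ — is pinned down by $\lambda$ and $\ell$; the whole inductive mechanism rests on the two ``$b$''-factors being Knuth equivalent at the start. Once these are in place, together with the type-$A$ corollary of \cite{Ful 96} and the reversibility of the symplectic insertion on the column $1\,2\cdots\ell$, the remainder is routine tensor-product bookkeeping.
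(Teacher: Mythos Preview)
Your crystal-theoretic reduction is valid in outline, but it is a substantial detour compared to the paper's argument, and the step you correctly flag as the obstacle is a real gap as written. Invoking ``reversibility of the Baker--Lecouvey insertion'' does not pin down $P(S^{-})$: reversibility recovers a word from the pair $(P,Q)$, and you have no control over the recording tableau of $cr(S^{-})$, so nothing a priori prevents two distinct Knuth classes from producing $K(\lambda)$ when inserted after the column $1\,2\cdots\ell$. The base case can, however, be secured directly: in the Yamanouchi case the weight of $S$ is $\lambda$, so $S$ contains exactly $\lambda_i$ copies of $i$; since the column-length multiset equals that of $\lambda$, a short counting argument forces every column of $S$ to read $1\,2\cdots(\text{its length})$. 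Hence $cr(S^{-})$ is itself a highest-weight word of weight $\lambda-(1^\ell,0^{n-\ell})$, giving $P(S^{-})=K(\lambda^{-})$ independently of the shape. With this in hand your tensor-product induction goes through.

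The paper's proof avoids all of this. It simply observes that any two admissible skew shapes with the given column-length multiset and the same last-column length are connected by forward and reverse symplectic \emph{jeu de taquin} slides confined to the first $k-1$ columns; such slides never touch the last column, and the uniqueness in Proposition~\ref{Fultonices} guarantees that the tableau reached is the one sought, so the last column is preserved. (No cells are lost along the way because $|S|=|T|$.) This is precisely the type-$A$ argument behind \cite[Proposition 7]{Ful 96}, transported verbatim to type $C$. Your route essentially re-imports that same statement via a crystal path; it works, but the direct sliding argument is both shorter and stays within the machinery already needed for the right-key construction.
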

	\begin{proof}
		All other skew tableaux with given last column length can be found from a given one by playing the symplectic \textit{jeu de taquin} or its reverse in all columns except the last one. Note that $S$ has the same number of cells of the tableau obtained after rectifying, hence we can't lose cells when applying the symplectic \textit{jeu de taquin} or its reverse.
	\end{proof}
	
	Fixed a KN tableau $T$, consider the set of all possible last columns taken from skew tableaux with same number of columns of each length as $T$. Corollary \ref{lastcolumn} implies that this set has one element for each distinct column length of $T$. For every column $C$ in this set, consider the columns $rC$, its right column. The next proposition implies that this set of right columns is nested, if we see each column as the set formed by its elements.
	
	\begin{proposition}\label{nestedcolumns}
		Consider $T$ a two-column KN skew tableau $C_1C_2$ with an empty cell in the first column. Slide that cell once via symplectic \textit{jeu de taquin}, obtaining a two-column KN skew tableau $C_1'C_2'$ with an empty cell. Then $rC_2'\subseteq rC_2$.
	\end{proposition}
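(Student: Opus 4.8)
The plan is to run through the dichotomy in the description of the Sheats symplectic \textit{jeu de taquin} recalled in Section~\ref{SecRS}: a single elementary slide of the empty cell of $C_1$ is either vertical or horizontal. If it is vertical, then by definition the empty cell merely exchanges position with the cell beneath it \emph{inside} $C_1$, the second column is not touched at all, $C_2'=C_2$, and hence $rC_2'=rC_2$; there is nothing to prove. So from now on I assume the slide is horizontal, and I let $\beta$ be the letter that crosses from the second column into the first, treating separately the two subcases (1) and (2) of Section~\ref{SecRS} according to the sign of $\beta$.

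Before splitting into those subcases I record three elementary facts from Section~\ref{SecBruhat}: (i) passing from an admissible column $C$ to $rC$ alters only the barred letters belonging to a symmetric pair $(z,\overline{z})$ of $C$, each such $\overline{z}$ being replaced by some $\overline{t}$; in particular $rC$ and $C$ have the \emph{same unbarred part}; (ii) by the very definition of $\Phi$, the barred part of $rC$ equals the barred part of $\Phi(C)$; (iii) $\Phi$ is a size-preserving bijection between admissible and coadmissible columns and, like $\Phi^{-1}$, is computed independently on the ``unbarred half'' and the ``barred half''. In the subcase $\beta$ unbarred we have $C_2'=\Phi^{-1}(\Phi(C_2)\setminus\{\beta\})$, so $\Phi(C_2')=\Phi(C_2)\setminus\{\beta\}$ has the same barred part as $\Phi(C_2)$; by (ii) the barred parts of $rC_2'$ and $rC_2$ therefore \emph{coincide}. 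Since those barred parts agree and $|C_2'|=|C_2|-1$, the column $C_2'$ has exactly one unbarred letter fewer than $C_2$, and from the reconstruction algorithm for $\Phi^{-1}$ (which merely restores the replaced letters $t_i$ to the larger values $z_i$ prescribed by the unchanged barred/pair data) one checks that this amounts to deleting one element of the unbarred part; together with (i) this gives $rC_2'\subseteq rC_2$. The loss of admissibility of $C_1'$ flagged in Section~\ref{SecRS} only affects the first column and so does not interfere.

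In the subcase $\beta=\overline{b}$ barred we have $C_2'=C_2\setminus\{\overline{b}\}$, so $C_2'$ and $C_2$ have the same unbarred part and, by (i), so do $rC_2'$ and $rC_2$; it remains to compare barred parts. Write $z_1>\dots>z_r$ for the unbarred letters of $C_2$ occurring in a symmetric pair and $t_1>\dots>t_r$ for the associated splitting letters, and use primes for the analogous data of $C_2'$. If $b$ is not an unbarred letter of $C_2$, then $\overline{b}$ lies outside every pair, the pair structure is unchanged, deleting $\overline{b}$ frees only the value $b$ for the splitting procedure, and since $\overline{b}$ itself already belongs to $rC_2$ one gets $J'\subseteq J\cup\{b\}$ and hence that every barred letter of $rC_2'$ already lies in $rC_2$. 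If $b=z_j$ is an unbarred letter of $C_2$, then deleting $\overline{b}$ destroys exactly the pair $(z_j,\overline{z_j})$ and frees no value at all (its partner $z_j$ remains), so a short induction on the greedy definition of the $t_i$ shows that $(t_1',\dots,t_{r-1}')$ is obtained from $(t_1,\dots,t_r)$ by deleting the entry attached to the broken pair and possibly lowering later entries; thus $\{\overline{t}:t\in J'\}\subseteq\{\overline{t}:t\in J\}$, and again $rC_2'\subseteq rC_2$.

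The delicate point, and where I expect to spend the most care, is this last induction: when a symmetric pair of $C_2$ is broken one must exclude that some splitting letter $t_i'$ becomes \emph{larger} than $t_i$ in a way that produces a barred letter $\overline{t_i'}\notin rC_2$. This is prevented by the maximality built into the definition of the $t_i$ together with the structural observation that the potentially dangerous configuration, in which the upper bound $\min(t_{j-1},z_{j+1})$ governing $t_j'$ exceeds $z_j$, cannot occur, since $z_{j+1}>t_{j-1}$ forces $t_j<z_{j+1}$ and hence $\min(t_{j-1},z_j)=t_{j-1}$, making $t_j'=t_j$; in every remaining case $t_j'$ is forced to equal $t_j$ or $t_{j+1}$, and the cascade propagates this through the later entries. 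Once this is in hand, combining the two subcases with the vertical case completes the proof.
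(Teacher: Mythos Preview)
Your proof follows exactly the paper's case split (vertical slide; horizontal slide with $\beta$ unbarred via $\Phi$; horizontal slide with $\beta=\overline{b}$ barred via the splitting data $(z_i,t_i)$) and reaches the same conclusions, so the approach is the same and the argument is correct. One small imprecision worth fixing: in the unbarred subcase your parenthetical asserts that the ``barred/pair data'' governing $\Phi^{-1}$ is unchanged, but if $\overline{b}\in\Phi(C_2)$ then removing $b$ does break a pair in the coadmissible column---the paper treats that subcase by the very same cascade you spell out carefully for the barred subcase, so the containment still holds.
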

	\begin{proof}
		If the sliding was vertical then $C_2'=C_2$, hence $rC_2'=rC_2$. If the sliding was horizontal, Let $\beta$ be the number on the cell right of the empty cell on $spl(T)$. Recall $\Phi$, the function that takes an admissible column to the associated coadmissible column. 
		
		If $\beta=b$ is unbarred then $C_2'=\Phi^{-1}(\Phi(C_2)\setminus\{b\})$. In this case $\Phi(C_2')=\Phi(C_2)\setminus\{b\}$, hence $rC_2$ and $rC_2'$ have the same barred part. Consider $z_1<\dots<z_\ell$ the unbarred letters that appear on $C_2$ and not on $\Phi(C_2)$. When we take $b$ from $\Phi(C_2)$, if $\overline{b}\in \Phi(C_2)$ our set of letters $z_1<\dots<z_\ell$ will lose an element, giving the inclusion of the unbarred part of $C_2'$ in $C_2$; if $\overline{b}\not\in \Phi(C_2)$, then $b \in C_2$ and in $C_2'$ the least $z_i>b$ may reduce to $b$, and subsequent $z_j$ may reduce to $z_{j-1}$. Hence we have the inclusion of the unbarred part of $C_2'$ in $C_2$.
		
		If $\beta=\overline{b}$ is barred then $C_2'=C_2\setminus\{\overline{b}\}$. In this case $rC_2$ and $rC_2'$ have the same unbarred part.  Consider $\overline{t_1}>\dots>\overline{t_\ell}$ the barred letters that appear on $\Phi(C_2)$ and not on $C_2$. When we take $\overline{b}$ from $C_2$, if $b\in C_2$ our set of $\overline{t_1}>\dots>\overline{t_\ell}$ letters will lose an element, giving the inclusion of the barred part of $rC_2'$ in $rC_2$;  if $b\not\in C_2$, then $\overline{b} \in \Phi(C_2)$ and in $C_2'$ the least $\overline{z_i}>\overline{b}$ may reduce to $\overline{b}$, and subsequent bigger $\overline{z_j}$'s may reduce to $\overline{z_{j+1}}$. Hence we have the inclusion of the barred part of $\Phi(C_2')$ in $\Phi(C_2)$.
	\end{proof}
	
	This proposition defines a map that sends a KN tableau to a key tableau in type $C$, identified as the (symplectic) right key of a given KN tableau.
	
	\begin{theorem}[Right key map]\label{rightkeymap}
		Given a KN tableau $T$, we can replace each column with a column of the same size taken from the right columns of the last columns of all skew tableaux associated to it. We call this tableau the right key tableau of $T$ and denote it by $K_+(T)$.
	\end{theorem}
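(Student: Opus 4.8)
The plan is to show that the procedure described actually produces a well-defined \emph{key tableau}, i.e.\ a KN tableau of shape $\lambda$ whose columns are nested (and with no symmetric entries). Fix a KN tableau $T$ of shape $\lambda$, and let $\ell_1 > \ell_2 > \dots > \ell_r$ be the distinct column lengths occurring in $\lambda$. For each such length $\ell_k$, Corollary \ref{lastcolumn} tells us there is a unique last column $C^{(k)}$ arising as the last column of \emph{some} KN skew tableau with the same multiset of column lengths as $T$ that rectifies to $T$ (existence and uniqueness of such skew tableaux is Proposition \ref{Fultonices}); moreover $C^{(k)}$ is admissible, being a column of a KN skew tableau, so its right column $rC^{(k)}$ is defined. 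The right key $K_+(T)$ is built by placing, in each column of shape $\lambda$ of length $\ell_k$, the column $rC^{(k)}$.

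First I would verify that the columns $rC^{(1)}, \dots, rC^{(r)}$ are nested, meaning $rC^{(k)} \supseteq rC^{(k+1)}$ as sets for each $k$. This is where Proposition \ref{nestedcolumns} does the work: given a skew tableau realizing $C^{(k)}$ as last column, one can reach a skew tableau realizing the shorter last column $C^{(k+1)}$ by sliding empty cells into the last column via symplectic \textit{jeu de taquin}, applied one slide at a time to a two-column configuration consisting of the penultimate column and the last column. Proposition \ref{nestedcolumns} gives $rC_2' \subseteq rC_2$ after each single slide; iterating and using that (by Corollary \ref{lastcolumn}) the resulting last column is independent of the order of slides, I get $rC^{(k+1)} \subseteq rC^{(k)}$. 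Thus the candidate tableau $K_+(T)$ has nested columns when we order them by weakly decreasing length, which matches the column order in the Young diagram of the partition $\lambda$.

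Next I would check that $K_+(T)$ has no pair $i,\overline{i}$ in any column, so that it is genuinely a key tableau and not merely a tableau with nested columns. Since the columns are nested, it suffices to check this for the longest column $rC^{(1)}$; but $C^{(1)}$ is admissible, and for an admissible column $C$ the right column $rC$ is obtained by replacing each $\overline{z_i}$ (where $(z_i,\overline{z_i})$ is a symmetric pair in $C$) by $\overline{t_i}$ with $t_i \notin C$, $\overline{t_i} \notin C$ and $t_i < z_i$; by construction these replacements destroy every symmetric pair and introduce no new one (the $t_i$ are chosen precisely to avoid clashes), so $rC$ has no symmetric entries. Nested-ness then propagates this to all columns. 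Finally, since a tableau whose columns are nested and free of symmetric pairs is automatically a KN tableau (by the observation in the excerpt that for tableaux without symmetric entries being KN is equivalent to being semistandard, and nested columns with strictly increasing entries are semistandard), $K_+(T)$ lies in $\mathcal{KN}(\lambda,n)$ and is a key tableau. Well-definedness — that the column of length $\ell_k$ inserted does not depend on \emph{which} skew shape or \emph{which} of its realizing skew tableaux we used — is exactly Corollary \ref{lastcolumn} together with the uniqueness in Proposition \ref{Fultonices}.

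The main obstacle I anticipate is the nesting step: one must be careful that the sequence of \textit{jeu de taquin} slides used to pass from the length-$\ell_k$ last column to the length-$\ell_{k+1}$ last column can indeed be organized as a sequence of single slides each affecting only a two-column window ending in the last column, so that Proposition \ref{nestedcolumns} applies verbatim at every step; the order-independence of rectification (Corollary 6.3.9 of \cite{Lec 02}, cf.\ Corollary \ref{lastcolumn}) is what licenses this reorganization, and it also guarantees the intermediate last columns we pass through are exactly the $C^{(k)}$ for the intervening lengths. Everything else is bookkeeping about admissible columns and the split form.
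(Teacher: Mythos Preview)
Your proposal is correct and takes essentially the same approach as the paper: invoke Proposition \ref{nestedcolumns} (iterated via \textit{jeu de taquin} slides between the last two columns) to get nestedness, and then observe that the resulting tableau is a key tableau. The paper's proof is a two-line appeal to that proposition; your version spells out the details the paper leaves implicit, in particular the separate verification that each $rC^{(k)}$ has no symmetric entries (which indeed follows directly from the splitting construction rather than from Proposition \ref{nestedcolumns} itself), and the check that nested columns without symmetric pairs yield a KN tableau.
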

	\begin{proof}
		The previous proposition implies that the columns of $K_+(T)$ are nested and do not have symmetric entries. So, it is indeed a KN key tableau.
	\end{proof}
	\begin{remark}
		Recall the set up of Proposition \ref{Fultonices}. If the shape of $S$, $\mu/\nu$, is such that every two consecutive columns have at least one cell in the same row, then each column of $S$ is a column of the word $cr(S)$, hence $cr(S)$ is a frank word. Moreover, the columns of $S$ appear in reverse order in $cr(S)$. Therefore, given a KN tableau $T$, the columns of $K_+(T)$ can be also found as the right columns of the first columns of frank words associated to $T$.
		
		If $T$ is a SSYT then this right key map coincides with the one defined by Lascoux and Schützenberger in \cite{LasSchu 90}.
	\end{remark}
	\begin{example}\label{ExRightkey}
		The tableau $T=\YT{0.17in}{}{
			{{1},{3},{\overline{1}}},
			{{3},{\overline{3}}},
			{{\overline{3}}}}$ gives rise to six KN skew tableaux with same number of columns of each length as $T$, each one corresponding to a permutation of its column lengths, and each one is associated to its column reading, which is a frank word.
		
		\begin{tikzpicture}
		\node at (0,0) {$\YT{0.17in}{}{
				{{1},{3},{\overline{1}}},
				{{3},{\overline{3}}},
				{{\overline{3}}}}$};	
		\node at (4,1) {$\begin{tikzpicture}[scale=.4, baseline={([yshift=-.8ex]current bounding box.center)}]
			\draw (0,0) rectangle +(1,1);
			\draw (0,1) rectangle +(1,1);
			\draw (0,2) rectangle +(1,1);
			\draw (1,2) rectangle +(1,1);
			\draw (2,2) rectangle +(1,1);
			\draw (2,3) rectangle +(1,1);
			\node at (.5,.5) {$\overline{3}$};
			\node at (.5,1.5) {$3$};
			\node at (.5,2.5) {$1$};
			\node at (1.5,2.5) {$\overline{3}$};
			\node at (2.5,2.5) {$\overline{1}$};
			\node at (2.5,3.5) {$3$};
			\end{tikzpicture}$};
		\node at (8,1) {$\begin{tikzpicture}[scale=.4, baseline={([yshift=-.8ex]current bounding box.center)}]
			\draw (0,0) rectangle +(1,1);
			\draw (1,0) rectangle +(1,1);
			\draw (1,1) rectangle +(1,1);
			\draw (1,2) rectangle +(1,1);
			\draw (2,1) rectangle +(1,1);
			\draw (2,2) rectangle +(1,1);
			\node at (.5,.5) {$2$};
			\node at (1.5,.5) {$\overline{2}$};
			\node at (1.5,1.5) {$\overline{3}$};
			\node at (1.5,2.5) {$1$};
			\node at (2.5,1.5) {$\overline{1}$};
			\node at (2.5,2.5) {$3$};
			\end{tikzpicture}$};
		\node at (4,-1) {$\begin{tikzpicture}[scale=.4, baseline={([yshift=-.8ex]current bounding box.center)}]
			\draw (0,0) rectangle +(1,1);
			\draw (0,1) rectangle +(1,1);
			\draw (1,0) rectangle +(1,1);
			\draw (1,1) rectangle +(1,1);
			\draw (1,2) rectangle +(1,1);
			\draw (2,2) rectangle +(1,1);
			\node at (.5,.5) {$2$};
			\node at (.5,1.5) {$1$};
			\node at (1.5,.5) {$\overline{2}$};
			\node at (1.5,1.5) {$\overline{3}$};
			\node at (1.5,2.5) {$3$};
			\node at (2.5,2.5) {$\overline{1}$};
			\end{tikzpicture}$};
		\node at (8,-1) {$$\begin{tikzpicture}[scale=.4, baseline={([yshift=-.8ex]current bounding box.center)}]
			\draw (0,0) rectangle +(1,1);
			\draw (0,1) rectangle +(1,1);
			\draw (1,1) rectangle +(1,1);
			\draw (2,1) rectangle +(1,1);
			\draw (2,2) rectangle +(1,1);
			\draw (2,3) rectangle +(1,1);
			\node at (.5,.5) {$2$};
			\node at (.5,1.5) {$1$};
			\node at (1.5,1.5) {$\overline{2}$};
			\node at (2.5,1.5) {$\overline{1}$};
			\node at (2.5,2.5) {$\overline{3}$};
			\node at (2.5,3.5) {$3$};
			\end{tikzpicture}$$};
		\node at (12,0) {$\SYT{0.17 in}{}{{{3}},{{\overline{3}},{1}},{{\overline{1}},{\overline{2}},{2}}}$};
		\draw [->] (1,.5) -- (3,1);
		\draw [->] (1,-.5) -- (3,-1);
		\draw [->] (5,1) -- (7,1);
		\draw [->] (5,-1) -- (7,-1);
		\draw [->] (9,1) -- (11,.5);
		\draw [->] (9,-1) -- (11,-.5);	
		\end{tikzpicture}
		
		The right key tableau associated to $T$ has as columns $r\!\YT{0.17in}{}{
			{{3}},
			{{\overline{3}}},
			{{\overline{1}}}}$, $r\!\YT{0.17in}{}{
			{{3}},
			{{\overline{1}}}}$ and $r\!\YT{0.17in}{}{
			{{\overline{1}}}}$. 
		Hence $K_+(T)=\YT{0.17in}{}{
			{{3},{3},{\overline{1}}},
			{{\overline{2}},{\overline{1}}},
			{{\overline{1}}}}$.
	\end{example}
	
	In the same spirit of the right key, we define the left key of a KN tableau. 
	Just like in Proposition \ref{nestedcolumns}, we can prove that the slides of the symplectic \textit{jeu de taquin} are effectively adding an entry to $\ell C_1$, i.e. $\ell C_1\subseteq \ell C_1'$, hence the left columns of the first columns of all skew tableaux with the same number of columns of each length as $T$ will be nested.
	
	So, if we replace each column of $T$ with a column of the same size taken from the left columns of the first columns of all skew tableaux associated to it we obtain the left key $K_-(T)$.
	
	\begin{example}In Example \ref{ExRightkey} we have that the left key of
		$T=\YT{0.17in}{}{
			{{1},{3},{\overline{1}}},
			{{3},{\overline{3}}},
			{{\overline{3}}}}$ has as columns
		
		$\ell\!\YT{0.17in}{}{
			{{1}},
			{{3}},
			{{\overline{3}}}}$, $\ell\!\YT{0.17in}{}{
			{{1}},
			{{2}}}$ and $\ell\!\YT{0.17in}{}{
			{{2}}}$. 
		Hence $K_-(T)=\YT{0.17in}{}{
			{{1},{1},{2}},
			{{2},{2}},
			{{\overline{3}}}}$.
	\end{example}

	\subsection{Demazure crystals and right key tableaux}
	
	Let $\lambda\in\Z^n$ be a partition and $v\in B_n\lambda $.
	We define $\mathfrak{U}(v)=\{T\in \mathcal{KN}(\lambda,n)\mid K_+(T)=K(v) \}$ the set of KN tableaux of $B^\lambda$ with right key $K(v)$.
	
	Given a subset $X$ of $\mathfrak{B}^\lambda$, consider the operator $\mathfrak{D}_i$ on $X$, with $i\in [n]$ defined by
	$\mathfrak{D}_iX=\{x\in \mathfrak{B}^\lambda\mid e_i^k(x)\in X \,\,\text{for some $k\geq 0$}\}$\cite{BSch 17}.
	If $v= \sigma \lambda$ where $\sigma =s_{i_{\ell(\sigma)}}\dots s_{i_1}\in B_n$ is a reduced word, we define the \emph{Demazure crystal} to be
	\begin{align}\label{Dem}
		\mathfrak{B}_v=\mathfrak{D}_{i_{\ell(\sigma)}}\dots\mathfrak{D}_{i_1}\{K(\lambda)\} .
	\end{align} 
	
	This definition is independent of the reduced word for $\sigma$ \cite[Theorem 13.5]{BSch 17}. In particular, when $\sigma$ is the longest element of $B_n$ we recover $\mathfrak{B}^{\lambda}$. Also this definition is independent of the coset representative of $\sigma W_\lambda$, that is, $\mathfrak{B}_{\sigma\lambda  }=\mathfrak{B}_{\sigma_v\lambda  }$. From \cite[Proposition 2.4.4]{BB05}, $\sigma$ uniquely factorizes  as $\sigma_v\sigma'$ where $\sigma'\in W_\lambda$ and $\ell(\sigma)=\ell(\sigma_v)+\ell(\sigma')$. From the signature rule, Subsection \ref{crystal}, if $ \sigma'=s_{j_{\ell(\sigma')}}\dots s_{j_1} \in W_\lambda$ is a reduced word, $\mathfrak{B}_{\sigma'\lambda }=\mathfrak{B}_{\lambda }=\mathfrak{D}_{j_{\ell(\sigma')}}\dots\mathfrak{D}_{j_1}\{K(\lambda)\} =\{K(\lambda)\}$ and we may write in \eqref{Dem} $\mathfrak{B}_{\sigma\lambda }=\mathfrak{B}_{v}$.
	
	From \cite[Proposition 2.5.1]{BB05}, if $\rho\leq\sigma$ in $B_n$ then $\rho_u\leq\sigma_v$ where $u=\rho\lambda$. Since $e_i^0(x)=x$, if $\rho\leq\sigma$ then  $\mathfrak{B}_{\rho\lambda }=\mathfrak{B}_{\rho_u\lambda }\subseteq\mathfrak{B}_{\sigma_v\lambda }=\mathfrak{B}_{ v}$.
	Thus we define the\emph{ Demazure  atom crystal} $\widehat{\mathfrak{B}}_{ v}$ to be
	\begin{equation}\label{atom}\widehat{\mathfrak{B}}_{ v}=\widehat{\mathfrak{B}}_{ \sigma\lambda }:=\mathfrak{B}_{\sigma_v\lambda  }\setminus\bigcup\limits_{\rho_u<\sigma_v }\mathfrak{B}_{\rho_u\lambda }=\mathfrak{B}_{v }\setminus\bigcup\limits_{u<v }\mathfrak{B}_{u}=\mathfrak{B}_{v }\setminus\bigcup\limits_{K(u)<K(v) }\mathfrak{B}_{u},\end{equation}
	where the two rightmost identities follow from Theorem \ref{keycoset}.

	\begin{lemma}\label{fi}
		Let $\sigma=s_i$  be a generator of $B_n$ and $C$ an admissible column such that $f_i(C)\neq 0$. Then $\text{\emph{wt}}(rC)=\text{\emph{wt}}(r(f_i(C)))$ or   $\text{\emph{wt}}(rC)=\sigma (\text{\emph{wt}}(r(f_i(C))))$.
	\end{lemma}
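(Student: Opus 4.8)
The plan is to analyze the effect of $f_i$ on the right column $rC$ directly, splitting the argument according to which letter of $C$ the signature rule marks $f_i$ to act on. Recall that $rC$ is built from $C$ by leaving the unbarred part untouched and replacing each barred symmetric letter $\overline{z_i}$ by $\overline{t_i}$, where the $t_i$ are the ``gap'' letters chosen in the splitting procedure. So the weight $\mathrm{wt}(rC)$ differs from $\mathrm{wt}(C)$ only in the coordinates indexed by the $t_i$'s and by the $z_i$'s: each $z_i$ contributes $+1$ in coordinate $z_i$ (from the unbarred $z_i$ still present) rather than a $-1$, and instead we get a $-1$ in coordinate $t_i$. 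First I would set up this bookkeeping precisely, writing $\mathrm{wt}(rC)$ as $\mathrm{wt}(C)$ plus a sum of $(\,\boldsymbol{e}_{z_i}-\boldsymbol{e}_{t_i}\,)$ over the symmetric pairs, and similarly for $f_i(C)$.

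Next I would run through the cases of the signature rule for a single column. Since $C$ is a column (strictly increasing), at most one of $i+1,\overline{i}$ and at most one of $i,\overline{i+1}$ occurs, and $f_i(C)\neq 0$ means the relevant $+$ survives bracketing. The four basic possibilities for the letter acted on are: $i\mapsto i+1$ (with $\overline{i}\notin C$), $i\mapsto i+1$ interacting with $\overline{i+1}\in C$ producing a symmetric pair, $\overline{i+1}\mapsto\overline{i}$ (with $i\notin C$), and $\overline{i+1}\mapsto\overline{i}$ where $i\in C$ so a symmetric pair $i,\overline{i}$ is created or destroyed; one must also treat $i=n$ separately, where $f_n$ sends $n\mapsto\overline{n}$. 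In the ``generic'' cases where no symmetric pair is created or destroyed and none of the splitting gap-letters $t_j$ is affected, the splitting data $I,J$ for $f_i(C)$ is obtained from that of $C$ by the same coordinate swap (or sign change in coordinate $n$), so $\mathrm{wt}(r(f_i(C)))=s_i\,\mathrm{wt}(rC)$ directly, or equals it when the swapped coordinates happen to be outside $\{z_j,t_j\}$ in a way that makes $s_i$ act trivially on the difference — giving $\mathrm{wt}(rC)=\mathrm{wt}(r(f_i(C)))$.

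The delicate cases are those in which applying $f_i$ creates or destroys a symmetric pair, or changes the set $J=\{t_1,\dots,t_r\}$ of chosen gap-letters (for instance $f_i$ could turn some $z_j$ or $t_j$ into $z_j\pm 1$, forcing a cascade of adjustments in the greedy choice of the $t$'s, exactly as in the cascading phenomenon in Proposition~\ref{nestedcolumns}). Here I would argue that any such cascade shifts the $t_j$'s by amounts that, after passing to $rC$, amount precisely to the transposition $s_i=(i\,\,i{+}1)$ acting on the weight: the ``hole'' at position $t_j$ moves by one, and $f_i$ moving a letter between positions $i$ and $i+1$ is exactly what $s_i$ records on $\mathrm{wt}(rC)$. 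For the barred-pair-creation case one checks that the new $\overline{z}$ together with its partner $z$ replaces an occurrence of $\overline{i}$ by $\overline{i+1}$ (or vice versa) in the barred part while the new gap letter absorbs the difference, and this too is visible as either no change or an $s_i$-move on the weight of $rC$. I would handle this by checking that the column reading $cr(rC)$ satisfies $P(cr(f_i(C)))$-type compatibility, or more elementarily by invoking that $\Phi$ and hence the passage $C\mapsto rC$ commutes appropriately with $e_i,f_i$ at the level of single columns — a fact I expect follows from the crystal-theoretic description in \cite{Lec 02,She 99} combined with the signature rule.

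The main obstacle I anticipate is precisely the cascade case: proving that the greedy re-selection of the gap letters $t_1>\cdots>t_r$ after a single $f_i$-move changes $\mathrm{wt}(rC)$ by at most one simple transposition $s_i$, rather than by some more complicated permutation. I would tackle this by a careful induction on the index $j$ of the gap letters (as in the proof of Proposition~\ref{nestedcolumns}), showing that only the one gap letter adjacent to the moved entry can shift, and it shifts by exactly one in the $i,i{+}1$ (or $n$) coordinate; a small finite check of the residual configurations, or appeal to the reversibility of $\Phi$ in \cite[Section 2.2]{Lec 02}, then closes the argument.
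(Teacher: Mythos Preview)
Your general strategy---a case analysis of how $f_i$ interacts with the splitting data---is exactly the paper's approach, but you have made it harder than it needs to be, and your execution does not close.

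The paper's proof is a bare finite check. For $i=n$ there is one configuration. For $i<n$, since $C$ is a strictly increasing column, the subset $\{i,i+1,\overline{i+1},\overline{i}\}\cap C$ together with the condition $f_i(C)\neq 0$ leaves exactly six configurations, and the paper simply lists them. In each case one writes down which of $i,i+1,\overline{i},\overline{i+1}$ lie in $rC$ and in $r(f_i(C))$, reads off the weights in coordinates $i$ and $i{+}1$, and observes that they either agree or are swapped. That is the whole proof. Your ``four basic possibilities'' do not cover these six cases cleanly (for instance you do not isolate the case $i+1,\overline{i+1}\in C$, $i,\overline{i}\notin C$, where a symmetric pair is \emph{destroyed} and the weight of $rC$ is unchanged, nor the case $i,i+1,\overline{i+1}\in C$).

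The cascade you are worried about never happens, and the reason is local: the only letters whose availability as gap letters can change between $C$ and $f_i(C)$ are $i$ and $i{+}1$, and they are adjacent. So if, say, $i{+}1$ was chosen as some $t_j$ for $C$ and becomes unavailable in $f_i(C)$, the greedy rule simply picks $i$ instead (which has just become available), and nothing below $i$ is disturbed. You do not need an induction on the gap index or any appeal to crystal compatibility of $\Phi$; in each of the six cases this locality is a two-line check, and the paper records it by the phrase ``all other appearances in $rC$ are intact.'' Your plan would work if completed, but the right move is to abandon the general bookkeeping with $\mathrm{wt}(rC)=\mathrm{wt}(C)+\sum(\boldsymbol{e}_{z_j}-\boldsymbol{e}_{t_j})$ and simply enumerate.
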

	\begin{proof}
		Let $i=n$. We can apply $f_i$ to $C$ if and only $n\in C$ and $\overline{n}\not\in C$. In this case $n\in rC$ and after applying $f_i$ we have $n\not\in C$ and $\overline{n}\in C$, hence $\overline{n}\in rC$. So $\text{wt}(rC)=s_n(\text{wt}(r(f_n(C))))$.
		
		Let $i<n$. We can apply $f_i$ to $C$, so we have $6$ cases to study:
		\begin{enumerate}
			\item $i\in C$, $i+1, \overline{i+1}, \overline{i}\not \in C$: In this case we have that $i+1\in f_i(C)$, $i, \overline{i+1}, \overline{i}\not \in f_i(C)$. Note that $\overline{i}\notin rC$ and $\overline{i+1}\notin r(f_i(C))$. If  $\overline{i+1}\not \in rC$ then  $\overline{i}\not \in r(f_i(C))$, hence $f_i$ swaps the weight of $i$ and $i+1$ from $(1,0)$ to $(0,1)$, respectively.
			If  $\overline{i+1} \in rC$ then $\overline{i} \in r(f_i(C))$, hence $f_i$ swaps the weight of $i$ and $i+1$ from $(1,-1)$ to $(-1,1)$.
			
			\item $i,\overline{i+1}\in C$, $i+1 , \overline{i}\not \in C$: In this case we have that $i+1, \overline{i+1}\in f_i(C)$, $i, \overline{i}\not \in f_i(C)$. Note that $i,\overline{i+1}\in rC$, $i+1, , \overline{i}\not \in rC$ and that $i+1, \overline{i}\in r(f_i(C))$, $i, \overline{i+1}\not \in r(f_i(C))$, and all other appearances in $rC$ are intact. Hence  $f_i$ swaps the weight of $i$ and $i+1$ from $(1,-1)$ to $(-1,1)$.
			
			\item $i+1,\overline{i+1}\in C$, $i , \overline{i}\not \in C$: In this case we have that $i+1, \overline{i}\in f_i(C)$, $i, \overline{i+1}\not \in f_i(C)$. Note that $i+1,\overline{i}\in rC$, $i, \overline{i+1}\not \in rC$ and that $i+1, \overline{i}\in r(f_i(C))$, $i, \overline{i+1}\not \in r(f_i(C))$, and all other appearances in $rC$ are intact. Hence  $f_i$ did nothing to weight of $rC$.
			
			\item $i, i+1,\overline{i+1}\in C$, $ \overline{i}\not \in C$: In this case we have that $i, i+1, \overline{i}\in f_i(C)$, $\overline{i+1}\not \in f_i(C)$. Note that $i,i+1\in rC$, $\overline{i+1} , \overline{i}\not \in rC$ and that $i, i+1\in r(f_i(C))$, $\overline{i+1}, \overline{i}\not \in r(f_i(C))$, and all other appearances in $rC$ are intact. Hence  $f_i$ did nothing to weight of $rC$.
			
			\item $i, \overline{i+1},\overline{i}\in C$, $i+1 \not \in C$: In this case we have that $i+1, \overline{i+1}, \overline{i}\in f_i(C)$, $i\not \in f_i(C)$. Note that $i, \overline{i+1}\in rC$, $i+1 , \overline{i}\not \in rC$ and that $i+1, \overline{i}\in r(f_i(C))$, $i, \overline{i+1} \not \in r(f_i(C))$, and all other appearances in $rC$ are intact. Hence $f_i$ swaps the weight of $i$ and $i+1$ from $(1,-1)$ to $(-1,1)$.
			
			\item $\overline{i+1}\in C$, $i,i+1, \overline{i}\not \in C$: In this case we have that $\overline{i}\in f_i(C)$, $i,i+1, \overline{i+1}\not \in f_i(C)$. Note that $i,i+1\not\in rC$ and $\overline{i+1} \in rC$.  If  $\overline{i} \in rC$ then we have  $i,i+1\not\in r(f_i(C))$ and $\overline{i+1},\overline{i} \in r(f_i(C))$, so $f_i$ did nothing to weight of $rC$. If  $\overline{i} \not\in rC$ then  $\overline{i+1}\not \in r(f_i(C))$ and  $\overline{i} \in r(f_i(C))$, hence $f_i$ swaps the weight of $i$ and $i+1$ from $(0,-1)$ to $(-1,0)$.
		\end{enumerate}
	\end{proof}
	\begin{remark}
		All the cases where the weight is preserved happen to have equal weight for $i$ or $i+1$ in $rC$ or we are in a column $C$ in which we can also apply $e_i$. If the weights for $i$ and $i+1$ in $rC$ swap, then if $rC$ the weight of $i$  is bigger (in the usual ordering) then the weight of $i+1$.
	\end{remark} Hence we have the following corollaries:
	
	\begin{corollary}\label{onde está fi de T}
		Let $T$ be a KN tableau and $i\in [n]$. If
		$K_+(T)=K(v)$, for some $v=(v_1,\dots, v_n)\in\Z^n$, then $K_+(f_i(T))=K(v) \text{ or } K_+(f_i(T))=K(s_iv).$
		Moreover, $K_+((T)f_i)=K(vs_i)$ only if $v_i>v_{i+1}$ (in the usual ordering of real numbers) and $1\leq i<n$, or, $v_i>0$ and $i=n$. 
	\end{corollary}
	\begin{proof}
		Consider a multiset of frank words $F$ such that the multiset of length of their first columns is the same of the multiset of lengths of columns of $T$. 
		
		If $K_+(f_i(T))=K_+(T)$ then we are done. Else there are two cases: $1\leq i<n$ and $i=n$.
		
		Consider $1\leq i<n$. 
		Since there is a change in the weight of the key tableau, we have that in at least one first column of words in $F$ weight of $i$ is bigger or equal than the weight of $i+1$. These first columns form a nested set without symmetric entries, hence in all first column of words in $F$ weight of $i$ is bigger or equal than the weight of $i+1$.

		Let $A$ be the subset of $F$ such that the weight of $i$ and $i+1$ in the right column of its first column is different and does not swap when we apply $f_i$ to the frank word.
		
		Consider $(a,b)$ the sum of weights of $i$ and $i+1$, respectively, of all right columns of first columns of words in $A$, and $(c,d)$ defined analogously to $F\setminus A$.  
		
		The weights of $i$ and $i+1$ in $K_+(T)$ is $(a,b)+(c,d)=(a+c,b+d)$ and the weights of $i$ and $i+1$ in $K_+(f_i(T))$ is $(a,b)+(d,c)=(a+d,b+c)$, and note that $(a+c,b+d)\in B_2(a+d,b+c)$, because $f_i$ doesn't change any other weight (Lemma \ref{fi}).
		
		Since in all first columns of $F$ weight of $i$ is bigger or equal than the weight of $i+1$, $a\geq0$ and $b\leq 0$, and they are equal when $A=\emptyset$, so $(a+c,b+d)= s_1(a+d,b+c)$, hence $\text{wt}(K_+(f_i(T)))=s_iv$. Hence we assume $a\neq b$.  If $c=d$ we have $\text{wt}(K_+(f_i(T)))=v$, hence $K_+(f_i(T))=K(v)=K_+(T)$, which is a contradiction.
		
		This implies that $(a+c,b+d)=\sigma(a+d,b+c)$ where $\sigma=\overline{12}$ or $\sigma=\overline{21}$.
		The first case implies that $a=\frac{-c-d}{2}=b$ and the second case implies  $c=\frac{-a-b}{2}=d$, hence there are not more possibilities for the weight of $K_+(f_i(T))$.
		
		The case $i=n$ is a simpler version of this one.
	\end{proof}

	\begin{corollary}\label{ei}
		Let $\sigma=s_i$  be a generator of $B_n$ and $C$ an admissible column. Then $\text{\emph{wt}}(rC)=\text{\emph{wt}}(r(e_i(C)))$ or   $\text{\emph{wt}}(rC)=\sigma(\text{\emph{wt}}(r(e_i(C))))$.
	\end{corollary}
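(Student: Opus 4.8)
The plan is to deduce this directly from Lemma~\ref{fi}, exploiting that $e_i$ and $f_i$ are mutually inverse partial bijections on a seminormal crystal. I will assume $e_i(C)\neq 0$, since otherwise $r(e_i(C))$ is undefined and there is nothing to prove. Set $C':=e_i(C)$.

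First I would check that $C'$ is again an admissible column, so that Lemma~\ref{fi} is applicable to it. An admissible column is exactly a KN tableau of shape $(1^k)$ for some $k\le n$; the crystal operators preserve the shape of a KN tableau, and $e_i(C)\neq 0$, so $C'\in\mathcal{KN}((1^k),n)$, i.e.\ $C'$ is an admissible column. Next, by crystal axiom~(1) recalled in Subsection~\ref{crystal}, the equality $e_i(C)=C'$ forces $f_i(C')=C$; in particular $f_i(C')\neq 0$, so Lemma~\ref{fi} applies to $C'$ and yields
\[
\text{wt}(rC')=\text{wt}(r(f_i(C')))\quad\text{or}\quad\text{wt}(rC')=s_i(\text{wt}(r(f_i(C')))).
\]
Substituting $C'=e_i(C)$ and $f_i(C')=C$, this says $\text{wt}(r(e_i(C)))=\text{wt}(rC)$, or $\text{wt}(r(e_i(C)))=s_i(\text{wt}(rC))$. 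In the first case the claim is immediate. In the second case I would apply $s_i$ to both sides and use $s_i^2=1$ to get $s_i(\text{wt}(r(e_i(C))))=\text{wt}(rC)$, which is exactly the second alternative with $\sigma=s_i$. This finishes the argument.

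I do not expect a real obstacle here: the whole proof is a formal transfer along the involution $e_i\leftrightarrow f_i$, and the only points needing a line of justification --- that $e_i$ preserves admissibility of a column, and that $e_i$ and $f_i$ invert each other where they are nonzero --- are both immediate from the crystal structure on $\mathcal{KN}((1^k),n)$. As an alternative one could simply rerun the six-case (plus $i=n$) analysis of Lemma~\ref{fi} verbatim with $e_i$ in place of $f_i$, but invoking the involution avoids repeating that computation.
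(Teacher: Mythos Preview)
Your argument is correct and is essentially the same as the paper's: both set $C'=e_i(C)$, apply Lemma~\ref{fi} to $C'$ using $f_i(C')=C$, and then invert with $s_i^2=1$. Your version is more carefully written (you make explicit the implicit hypothesis $e_i(C)\neq 0$ and justify that $C'$ is admissible), but the route is the same.
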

	\begin{proof}
		Let $C'$ be $e_i(C)$. By Lemma \ref{fi} we have that  $\text{wt}(rC')=\text{wt}(r(f_i(C')))$ or   $\text{wt}(C')=\sigma(\text{wt}(r(f_i(C'))))$, so we have that   $\text{wt}(e_i(C))=\sigma(\text{wt}(rC))\Leftrightarrow \sigma(\text{wt}(e_i(C)))=\text{wt}(rC)$ or\linebreak $\text{wt}(r(e_i(C)))=\text{wt}(rC)$.
	\end{proof} 
	
	\begin{lemma}\label{eicheck}
		Let $i\in \left[n\right]$ and $C$ be an admissible column such that one of the following happens
		\begin{enumerate}
			\item $i<n$ and the weight of $i$ in $rC$ is less than the weight of $i+1$ in $rC$;
			\item $i=n$ and weight of $i$ is negative in $rC$,
		\end{enumerate}
		then we can apply $e_i$ to $C$ (in the sense $e_i(C)\neq 0$).
	\end{lemma}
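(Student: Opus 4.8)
The plan is to prove the contrapositive: assuming $e_i(C)=0$, I would show that the displayed weight condition fails, i.e.\ the $i$-th coordinate of $\text{wt}(rC)$ is at least its $(i+1)$-th coordinate when $i<n$, and is $\geq 0$ when $i=n$. I will use freely that, $C$ being admissible, it can be split (\cite[Lemma 3.1]{She 99}), that the unbarred part of $rC$ coincides with the unbarred part of $C$, and that the barred part of $rC$ equals $\{\overline{w}:\overline w\in C,\ w\notin C\}\cup\{\overline{t_1},\dots,\overline{t_r}\}$, where $J=\{t_1>\dots>t_r\}\subseteq[n]$ is the set from the splitting definition (recall each $t_k\notin C$ and $\overline{t_k}\notin C$). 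Since $rC$ is a column, its $j$-th weight coordinate equals $1$ if $j\in rC$, equals $-1$ if $\overline j\in rC$, and equals $0$ otherwise.

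For $i<n$, I would apply the signature rule of Subsection \ref{crystal} to $cr(C)$. Only the letters $i,i+1,\overline{i+1},\overline i$ survive it, and since $C$ is strictly increasing and $i<i+1<\overline{i+1}<\overline i$ in $[\pm n]$, they occur in exactly that order; hence the $i$-signature of $C$ is a subword of $+\,-\,+\,-$. Running through these subwords, $e_i(C)=0$ precisely when the reduced word has no $-$, and those cases split into: (a)~$i\in C$; (b)~$i\notin C$ and $\overline{i+1}\in C$ (whence also $i+1\notin C$, since otherwise the $-$ coming from $i+1$ would be unbracketed); (c)~none of $i,i+1,\overline{i+1},\overline i$ lies in $C$. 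In case (a), $i\in rC$ while $\overline i\notin rC$ (it is not an unreplaced barred entry, as $i\in C$, nor some $\overline{t_k}$, as $t_k\notin C$), so the $i$-th coordinate of $\text{wt}(rC)$ is $1\geq$ the $(i+1)$-th. In case (b), $i+1\notin C$ gives $i+1\notin I$, so $\overline{i+1}$ is not replaced and stays in $rC$; hence the $(i+1)$-th coordinate of $\text{wt}(rC)$ is $-1$, at most the $i$-th.

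The remaining case (c) is the heart of the argument. There the $i$-th coordinate of $\text{wt}(rC)$ is $0$ or $-1$ according as $i\notin J$ or $i\in J$, and likewise for $i+1$; so it suffices to show $i\in J\Rightarrow i+1\in J$. The key point is that in case (c) both $i$ and $i+1$ are \emph{available} (they and their bars avoid $C$). Suppose $i=t_k$. Then $i$ is the greatest available letter strictly below $\min(t_{k-1},z_k)$ (with the convention $t_0:=+\infty$), so availability of $i+1>i$ forces $i+1\geq\min(t_{k-1},z_k)$. Now $z_k\in C$ but $i+1\notin C$, so $z_k\neq i+1$, and $z_k\leq i$ is impossible since it would give $\min(t_{k-1},z_k)\leq i$; hence $z_k\geq i+2$, so $\min(t_{k-1},z_k)=t_{k-1}$ (in particular $k\geq 2$), and $i+1\geq t_{k-1}>t_k=i$ forces $t_{k-1}=i+1\in J$.

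Finally, for $i=n$ the argument is shorter: from $t_1<z_1\leq n$ we get $J\subseteq[n-1]$, so $n\notin J$, and therefore $\overline n\in rC$ only when $\overline n\in C$ and $n\notin C$. The signature rule (now involving only $n$ and $\overline n$, with $n<\overline n$) yields $e_n(C)=0$ iff $n\in C$ or $\overline n\notin C$; in either case $\overline n\notin rC$, so the $n$-th coordinate of $\text{wt}(rC)$ is $0$ or $1$, hence not negative. I expect the signature bookkeeping and the $i=n$ case to be routine; the one genuinely delicate step is (c), where the inequality has to be squeezed out of the greedy recursion defining $J$.
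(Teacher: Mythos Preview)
Your proof is correct. You argue the contrapositive and split according to which of $i,i+1,\overline{i+1},\overline i$ lie in $C$, whereas the paper argues directly and splits according to the possible weight pairs $(0,1),(-1,1),(-1,0)$ in $rC$. These are the same case analysis viewed from opposite ends, so the two proofs are essentially equivalent in spirit.

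One point worth noting: your case~(c) is actually more explicit than the corresponding step in the paper. In the paper's $(-1,0)$ case it is asserted that $\overline i\in rC$ (together with $i+1,\overline{i+1}\notin rC$) forces $\overline i\in C$, but the possibility $\overline i\notin C$ and $i\in J$ is not visibly ruled out there. Your argument that $i\in J\Rightarrow i+1\in J$ (using the greedy recursion defining $J$ and the availability of $i+1$) is exactly what closes that gap: it would give $\overline{i+1}\in rC$, a contradiction. So your contrapositive organisation not only works, it makes the delicate step transparent.
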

	\begin{proof}
		If $i=n$ then $-n$ appears on $rC$ and $n$ does not. Since $n$ is the biggest unbarred letter of the alphabet we have that $-n$ also appears in $C$ and $n$ does not. Hence we can apply $e_n$ to $C$.
		
		If $i<n$ and the weight of $i$ in $rC$ is less than the weight of $i+1$ in $rC$ then the weight of both can be one of the following three options: $(0,1)$, $(-1,1)$, $(-1,0)$. Note that $rC$ does not have symmetric entries. So in the first two cases we have that $i+1$ exists in $rC$ and $i$ does not, hence $i+1$ exists in $C$ and $i$ does not, so we can apply $e_i$ to $C$.
		In the last case, we have that $\overline{i}$ exists in $rC$ and $i+1$ and $\overline{i+1}$ does not. Hence we have that $\overline{i}$ exists in $C$ and $i$ or $\overline{i+1}$ does not, so we can apply $e_i$ to $C$.
	\end{proof}
	
	The next theorem is the main theorem of this paper. It gives a description of a Demazure crystal atom in type $C$ using the right key map Theorem \ref{rightkeymap}. Lascoux and Schützenberger, in \cite[Theorem 3.8]{LasSchu 90}, proved the type $A$ version of this theorem, which consists in considering the case when $v\in \N^n$ and, consequently, $\sigma_v\in \mathfrak{S}_n$. For inductive reasoning, used in what follows, we recall the chain property on the set of minimal length coset representatives modulo $W_\lambda$ \cite[Theorem 2.5.5]{BB05}.
	
	\begin{theorem}\label{DemKey}
		Let $v\in B_n\lambda$.
		Then $\mathfrak{U}(v)=\widehat{\mathfrak{B}}_{v}$.
	\end{theorem}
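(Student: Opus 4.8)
The plan is to deduce the statement from the sharper identity
\[
\mathfrak{B}_v=\biguplus_{u\le v}\mathfrak{U}(u),
\]
a disjoint union over the Bruhat interval $[\lambda,v]$ in $B_n\lambda$, disjointness being automatic since each KN tableau has a single right key (Theorem~\ref{rightkeymap}). Granting this identity for every $v\in B_n\lambda$, one gets $\bigcup_{u<v}\mathfrak{B}_u=\bigcup_{u<v}\bigcup_{w\le u}\mathfrak{U}(w)=\biguplus_{w<v}\mathfrak{U}(w)$, so from the middle expression in \eqref{atom} together with Theorem~\ref{keycoset} (which replaces $K(u)<K(v)$ by $u<v$) we obtain $\widehat{\mathfrak{B}}_v=\mathfrak{B}_v\setminus\bigcup_{u<v}\mathfrak{B}_u=\mathfrak{U}(v)$, as claimed. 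Thus the whole proof reduces to the displayed identity, which I would establish by induction on $\ell(\sigma_v)$.

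For the base case $v=\lambda$ one has $\mathfrak{B}_\lambda=\{K(\lambda)\}$ from \eqref{Dem}; and $\mathfrak{U}(\lambda)=\{K(\lambda)\}$ by a short columnwise argument: $K_+$ fixes key tableaux, so $K(\lambda)\in\mathfrak{U}(\lambda)$, and conversely if $K_+(T)=K(\lambda)$ then the rightmost (shortest) column $C$ of $T$ has $C\le rC=[1,2,\dots,\ell]$ entrywise, forcing $C=[1,\dots,\ell]$, after which one peels columns off from the right. For the inductive step, take $v>\lambda$, pick a left descent $s_j$ of $\sigma_v$, and set $v'=s_jv$; a routine length estimate (using minimality of $\sigma_v$ in its $W_\lambda$-coset) shows that $s_j\sigma_v$ is again the minimal coset representative of its coset, i.e.\ $\sigma_{v'}=s_j\sigma_v$ with $\ell(\sigma_{v'})=\ell(\sigma_v)-1$. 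Choosing a reduced word for $\sigma_v$ starting with $s_j$ gives $\mathfrak{B}_v=\mathfrak{D}_j\mathfrak{B}_{v'}$, and by the inductive hypothesis $\mathfrak{B}_{v'}=\bigcup_{u\le v'}\mathfrak{U}(u)$. Since $\mathfrak{D}_j$ manifestly commutes with finite unions, $\mathfrak{B}_v=\bigcup_{u\le v'}\mathfrak{D}_j\mathfrak{U}(u)$, so it remains to compute $\mathfrak{D}_j\mathfrak{U}(u)$ exactly.

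The key step is to analyse $K_+$ along a single $j$-string $T_0\xrightarrow{f_j}\cdots\xrightarrow{f_j}T_m$ (with $T_0$ the $j$-highest vertex). Corollary~\ref{onde est� fi de T} (the behaviour of $K_+$ under $f_j$) shows $K_+$ is constant along the string with at most one jump: there is $0\le a\le m$ with $K_+(T_i)=K(w)$ for $i\le a$ and $K_+(T_i)=K(s_jw)$ for $i>a$, where, when $a<m$, the jump is a genuine increase $s_jw>w$ in $B_n\lambda$ (it can occur only once, since afterwards the $j$-th coordinate of the key weight lies strictly below the $(j{+}1)$-st, and Corollary~\ref{ei} controls the reverse $e_j$-moves). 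Moreover, by Lemma~\ref{eicheck}, if $K_+(T)=K(z)$ with $z$ ``anti-sorted'' at $j$ (i.e.\ $z_j<z_{j+1}$, or $z_n<0$ when $j=n$) then $e_j(T)\neq 0$, so $T_0$ always has $\mathrm{wt}\,K_+(T_0)$ sorted at $j$, and any tableau whose right-key weight is anti-sorted at $j$ necessarily sits strictly below the jump of its $j$-string — whose upper value is then forced to be $K(s_j z)$. Reading this local picture off for every $j$-string yields
\[
\mathfrak{D}_j\mathfrak{U}(u)=\begin{cases}\mathfrak{U}(u)\cup\mathfrak{U}(s_ju)&\text{if }s_ju>u,\\[2pt]\mathfrak{U}(u)&\text{otherwise},\end{cases}
\]
the only non-formal inclusion being $\mathfrak{U}(s_ju)\subseteq\mathfrak{D}_j\mathfrak{U}(u)$ when $s_ju>u$, which is exactly the statement that a tableau with right-key weight $s_ju$ lies below the jump of its string. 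Summing over $u\le v'$ and using the chain/lifting property for the Bruhat order on $B_n/W_\lambda$ (\cite[Theorem~2.5.5, Proposition~2.5.1]{BB05}), which gives $\{x:x\le v\}=\{u:u\le v'\}\cup\{s_ju:u\le v',\ s_ju>u\}$, we get $\mathfrak{B}_v=\bigcup_{x\le v}\mathfrak{U}(x)$, closing the induction.

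I expect the main obstacle to be the ``single-jump'' analysis of $K_+$ along a $j$-string, and in particular the verification that \emph{every} tableau with an anti-sorted right-key weight genuinely sits below the jump of its string: this is the point where Corollaries~\ref{onde est� fi de T} and~\ref{ei} and Lemma~\ref{eicheck} must be combined, and it has no purely formal shortcut. By contrast, the reductions in the first two paragraphs are formal, and the final Bruhat-order bookkeeping is standard once the chain property is invoked.
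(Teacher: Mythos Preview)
Your argument is correct and rests on the same technical core as the paper's proof: the behaviour of $K_+$ along a single $j$-string, governed by Corollary~\ref{onde est� fi de T}, Corollary~\ref{ei}, and Lemma~\ref{eicheck}. The one place where you compress more than the paper is the implication ``$K_+(T)=K(z)$ with $z$ anti-sorted at $j$ $\Rightarrow$ $e_j(T)\neq0$'': Lemma~\ref{eicheck} is stated for a single admissible column, and the lift to tableaux goes through frank words exactly as in the paper's own proof (the first column $V_1$ of an appropriate frank word has $rV_1$ anti-sorted, and since $V_1$ sits at the left end of the word, any unbracketed $-$ it produces remains unbracketed in the full word). This is the only step that is not purely formal, and you correctly flag it.

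Where you differ from the paper is in packaging. The paper inducts directly on the atom identity $\widehat{\mathfrak{B}}_v=\mathfrak{U}(v)$, arguing that the $e_i$-head of any $T\in\widehat{\mathfrak{B}}_{s_iv}$ lands in $\mathfrak{U}(v)$. You instead induct on the full decomposition $\mathfrak{B}_v=\biguplus_{u\le v}\mathfrak{U}(u)$, isolating the clean formula $\mathfrak{D}_j\mathfrak{U}(u)=\mathfrak{U}(u)\cup\mathfrak{U}(s_ju)$ (when $s_ju>u$) and then invoking the lifting property on $B_n/W_\lambda$ to assemble the interval $[\lambda,v]$ from $[\lambda,v']$. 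Your route makes the induction more transparent: $\mathfrak{D}_j$ acts naturally on Demazure crystals rather than on atoms, and your formula for $\mathfrak{D}_j\mathfrak{U}(u)$ records exactly what the string analysis yields. The paper's route avoids the explicit Bruhat bookkeeping at the end but pays for it with a less explicit inductive hypothesis. Both arrive at the same place via the same string analysis.
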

	\begin{proof} Let $\rho$ be a minimal length coset representative modulo $W_\lambda$ such that $v=\rho\lambda$. We will proceed by induction on $\ell(\rho)$. 
		If $\ell(\rho)=0$ then $\rho=id$ and $v=\lambda$. In this case we have that $\widehat{\mathfrak{B}}_\lambda=\{K(\lambda)\}=\mathfrak{U}(\lambda)$.
		
		Let $\rho\geq 0$. Consider $\sigma=s_i$ a generator of $B_n$ such that $\sigma\rho>\rho$ and $\sigma\rho\lambda\neq\rho\lambda=v$, i.e., $\rho^{-1}\sigma\rho\notin W_\lambda$.
		Recall $e_i$, $\varepsilon_i$,  $f_i$ and $\phi_i$ from the definition of the crystal $\mathfrak{B}^\lambda$.
		If $T\in \widehat{\mathfrak{B}}_{\sigma\rho\lambda}$ then $T$ is obtained after applying $f_i$ (maybe more than once) to a tableau in $\widehat{\mathfrak{B}}_{\rho\lambda}$, which by inductive hypothesis exists in $\mathfrak{U}(v)$. By Corollary \ref{onde está fi de T}, if $f_i(T)\notin\mathfrak{U}(v)$ then $f_i(T)\in \mathfrak{U}(\sigma v)$. So it is enough to prove that given a tableau $T\in \mathfrak{U}(v)\cup \mathfrak{U}(\sigma v)$ then $e_i^{\varepsilon_i(T)}(T)\in\mathfrak{U}(v)$. 
		
		We have two different cases to consider: $i=n$ and $i<n$. 
		
		
		
		If $T\in \mathfrak{U}(\sigma v)$ then, if $i<n$, there exists a frank word of $T$ such that, if $V_1$ is its first column then $rV_1$ has less weight for $i$ than for $i+1$ (less in the usual ordering of real numbers); if $i=n$, there exists a frank word of $T$ such that, if $V_1$ is its first column then $rV_1$ has negative weight for $i$. Since we are in the column $rV_1$, if $i<n$, $i$ and $i+1$ can have weights $(0,1)$, $(-1,1)$ or $(-1,0)$ and if $i=n$ then $i$ has weight $-1$. Note that these are the exact conditions of Lemma \ref{eicheck}. In either case, due to Lemma \ref{eicheck}, we can applying $e_i$ enough times to the frank word associated until this no longer happens. This is true because we only need to look to $V_1$ to see if it changes after applying $e_i$ enough times to the frank word.
		In the signature rule we have that successive applications of $e_i$ changes the letters of a word from the end to the beginning, so, from the remark after Lemma \ref{fi}, the number of times that we need to apply $e_i$, in order to conditions of Lemma \ref{eicheck} do not hold for the first column, is $\varepsilon_i(T)$. So $K_+\left(e_i^{\varepsilon(T)}(T)\right)\neq K(\sigma v)$, hence, from Corollary \ref{ei}, we have that $e_i^{\varepsilon_i(T)}(T)\in\mathfrak{U}(v)$.

		If $T\in \mathfrak{U}(v)$ then $e_i^{\varepsilon_i(T)}(T)\in\mathfrak{U}(v)$ because if not, $e_i^{\varepsilon_i(T)}(T)$ will be in a Demazure crystal associated to $\rho'\in B_n$, with $\rho'<\rho$ such that $\sigma\rho'=\rho$. This cannot happen because in this case $\rho'=\sigma\rho<\rho$, which is a contradiction.
	\end{proof}

	\subsection{Combinatorial description of type $C$ Demazure characters and atoms}
	Given $v\in B_n\lambda $ define the Demazure character (or key polynomial), $\kappa_v$, and the Demazure atom in type $C$, $\widehat{\kappa}_v$, as the generating functions of the KN tableau weights in $\mathfrak{B}_v$ and $\widehat{\mathfrak{B}}_v$, respectively:  $\kappa_v=\sum\limits_{T\in \mathfrak{B}_{\sigma_v\lambda }}x^{\text{wt}\,T},\,
	\widehat{\kappa}_v=\sum\limits_{T\in \widehat{\mathfrak{B}}_{\sigma_v\lambda }}x^{\text{wt}\,T}$. Theorem \ref{DemKey} detects the KN tableaux in $\mathfrak{B}^\lambda$ contributing to the Demazure atom $\widehat{\kappa}_v$, $\widehat{\kappa}_v=\sum\limits_{\substack{K_+(T)=K(v)\\
			T\in \mathfrak{B}^\lambda }}x^{\text{wt}\,T}$.
	\begin{proposition}\label{Keysetal}
		Given $v\in B_n\lambda $, one has
		$\kappa_v=\sum\limits_{u\leq v}\widehat{\kappa}_u.$
	\end{proposition}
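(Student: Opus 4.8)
The plan is to prove the set-theoretic identity $\mathfrak{B}_v=\biguplus_{u\le v}\widehat{\mathfrak{B}}_u$ (a disjoint union of subsets of $\mathfrak{B}^\lambda$) and then obtain the claimed polynomial identity by applying the weight generating function $\sum_T x^{\text{wt}\,T}$ to both sides, using the defining formulas for $\kappa_v$ and $\widehat{\kappa}_u$.

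First I would establish the (not yet known to be disjoint) equality $\mathfrak{B}_v=\bigcup_{u\le v}\widehat{\mathfrak{B}}_u$ by induction on $\ell(\sigma_v)$. The base case $v=\lambda$ is clear since $\widehat{\mathfrak{B}}_\lambda=\mathfrak{B}_\lambda=\{K(\lambda)\}$. For the inductive step, recall from \eqref{atom} that $\widehat{\mathfrak{B}}_v=\mathfrak{B}_v\setminus\bigcup_{u<v}\mathfrak{B}_u$ and, from the Bruhat--monotonicity recalled just before \eqref{atom}, that $\mathfrak{B}_u\subseteq\mathfrak{B}_v$ whenever $u\le v$; hence $\mathfrak{B}_v=\widehat{\mathfrak{B}}_v\cup\bigcup_{u<v}\mathfrak{B}_u$. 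Every $u<v$ in $B_n\lambda$ satisfies $\sigma_u<\sigma_v$ (Definition \ref{BruhatVec}) and therefore $\ell(\sigma_u)<\ell(\sigma_v)$, so the inductive hypothesis gives $\mathfrak{B}_u=\bigcup_{w\le u}\widehat{\mathfrak{B}}_w$ for each such $u$. Combining this with the elementary poset identity $\{\,w:\exists\,u<v,\ w\le u\,\}=\{\,w:w<v\,\}$ yields $\bigcup_{u<v}\mathfrak{B}_u=\bigcup_{w<v}\widehat{\mathfrak{B}}_w$, whence $\mathfrak{B}_v=\bigcup_{w\le v}\widehat{\mathfrak{B}}_w$.

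It then remains to see this union is disjoint. By Theorem \ref{DemKey}, $\widehat{\mathfrak{B}}_u=\mathfrak{U}(u)=\{T\in\mathcal{KN}(\lambda,n):K_+(T)=K(u)\}$, and since each $T$ has a single right key $K_+(T)$ while $u\mapsto K(u)$ is a bijection between $B_n\lambda$ and the key tableaux of shape $\lambda$ (Proposition \ref{UniKw}), the sets $\widehat{\mathfrak{B}}_u$, $u\in B_n\lambda$, are pairwise disjoint. Therefore $\mathfrak{B}_v=\biguplus_{u\le v}\widehat{\mathfrak{B}}_u$, and applying $\sum_T x^{\text{wt}\,T}$ to both sides gives $\kappa_v=\sum_{u\le v}\widehat{\kappa}_u$. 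The substantive inputs here are Theorem \ref{DemKey} and the inclusion $\mathfrak{B}_u\subseteq\mathfrak{B}_v$ for $u\le v$; the induction and the poset manipulation are routine, so I do not anticipate a genuine obstacle beyond keeping the bookkeeping straight.
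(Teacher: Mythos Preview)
Your proof is correct and follows essentially the same induction on $\ell(\sigma_v)$ as the paper, using the definition \eqref{atom} of the atom and the Bruhat-monotonicity $\mathfrak{B}_u\subseteq\mathfrak{B}_v$ for $u\le v$. The one place where you are more careful than the paper is disjointness: the paper simply asserts that proving $\mathfrak{B}_v=\bigcup_{u\le v}\widehat{\mathfrak{B}}_u$ suffices for the polynomial identity, leaving implicit why the union is disjoint, whereas you invoke Theorem~\ref{DemKey} and Proposition~\ref{UniKw} to make this explicit---a legitimate and clean way to close that small gap.
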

	\begin{proof}
		It is enough to prove that $\mathfrak{B}_v=\bigcup\limits_{u\leq v}\widehat{\mathfrak{B}}_u$, because $\kappa_v$ and $\widehat{\kappa}_u$ are the generating functions of the tableau weights in $\mathfrak{B}_v$ and $\widehat{\mathfrak{B}}_u$, respectively.
		Since $v=\sigma\lambda $, where $\sigma:=\sigma_v$, we can rewrite the identity as $\mathfrak{B}_{\sigma\lambda }=\bigcup\limits_{\rho\leq \sigma}\widehat{\mathfrak{B}}_{\rho\lambda }.$
		
		We will proceed by induction on $\ell(\sigma)$. If $\ell(\sigma)=0$ then the result follows because $\mathfrak{B}_{\lambda}=\widehat{\mathfrak{B}}_{\lambda}=\{K(\lambda)\}.$
		From \ref{atom}, $\widehat{\mathfrak{B}}_{\sigma\lambda }=\mathfrak{B}_{\sigma\lambda }\setminus\bigcup\limits_{\rho<\sigma}\mathfrak{B}_{\rho\lambda }$, and by inductive hypothesis, we have that $\mathfrak{B}_{\rho\lambda }=\bigcup\limits_{\rho'\leq \rho}\widehat{\mathfrak{B}}_{\rho'\lambda }.$
		Hence:
		\begin{align*}
			\widehat{\mathfrak{B}}_{\sigma\lambda }=\mathfrak{B}_{\sigma\lambda }\setminus\bigcup\limits_{\rho<\sigma}\mathfrak{B}_{\rho\lambda }=\mathfrak{B}_{\sigma\lambda }\setminus\bigcup\limits_{\rho<\sigma}\bigcup\limits_{\rho'\leq \rho}\widehat{\mathfrak{B}}_{\rho'\lambda }=\mathfrak{B}_{\sigma\lambda }\setminus\bigcup\limits_{\rho'<\sigma}\widehat{\mathfrak{B}}_{ \rho'\lambda}
		\end{align*}
	\end{proof}
	
	
	Proposition \ref{Keysetal}, the equivalence $u\leq v\Leftrightarrow K(u) \leq K(v)$, and Theorem \ref{DemKey}, allow to detect the KN tableaux contributing to a key polynomial in type $C$:
	
	\begin{align*}
		\kappa_v=\sum_{u\leq v}\widehat{\kappa}_u=\sum_{\substack{u\leq v\\T\in \mathfrak{U}(u)}}x^{\text{wt} T}=\sum_{\substack{K(u)\leq K(v)\\T\in \mathfrak{U}(u)}}x^{\text{wt} T}=\sum_{K_+(T)\leq K(v)}x^{\text{wt} T}.
	\end{align*}
	
	\begin{example}\label{cristal21}
		
		We start by looking to the crystal graph associated to the partition $\lambda=(2,1)$:
		\begin{multicols}{2}
			
			${
				\begin{tikzpicture}
				[scale=.5,auto=left]
				\node (n0) at (0,14.5) {$\YT{0.17 in}{}{{{1},{1}},{{2}}}$};
				\node (n1l) at (-3.5,13.5)  {$\YT{0.17 in}{}{{{1},{2}},{{2}}}$};
				\node (n1r) at (3.5,13.5)  {$\YT{0.17 in}{}{{{1},{1}},{{\overline{2}}}}$};
				\node (n2l) at (-3.5,11)  {$\YT{0.17 in}{}{{{1},{\overline{2}}},{{2}}}$};
				\node (n2r) at (3.5,11) {$\YT{0.17 in}{}{{{1},{2}},{{\overline{2}}}}$};
				\node (n3r) at (3.5,8.5)  {$\YT{0.17 in}{}{{{2},{2}},{{\overline{2}}}}$};
				\node (n4rr) at (5,6)  {$\YT{0.17 in}{}{{{2},{2}},{{\overline{1}}}}$};
				\node (n4r) at (2,6)  {$\YT{0.17 in}{}{{{2},{\overline{2}}},{{\overline{2}}}}$};
				\node (n5r) at (3.5,3.5)  {$\YT{0.17 in}{}{{{2},{\overline{2}}},{{\overline{1}}}}$};
				\node (n6r) at (3.5,1)  {$\YT{0.17 in}{}{{{\overline{2}},{\overline{2}}},{{\overline{1}}}}$};
				\node (n3ll) at (-5,8.5)  {$\YT{0.17 in}{}{{{1},{\overline{2}}},{{\overline{2}}}}$};
				\node (n4l) at (-3.5,6)  {$\YT{0.17 in}{}{{{1},{\overline{1}}},{{\overline{2}}}}$};
				\node (n3l) at (-2,8.5)  {$\YT{0.17 in}{}{{{1},{\overline{1}}},{{2}}}$};
				\node (n5l) at (-3.5,3.5)  {$\YT{0.17 in}{}{{{2},{\overline{1}}},{{\overline{2}}}}$};
				\node (n6l) at (-3.5,1)  {$\YT{0.17 in}{}{{{2},{\overline{1}}},{{\overline{1}}}}$};
				\node (n7) at (0,0)  {$\YT{0.17 in}{}{{{\overline{2}},{\overline{1}}},{{\overline{1}}}}$};
				
				\draw (0,12.5)--(-3.5,15.5);
				\draw (0,12.5)--(3.5,15.5);
				\draw (0,12.5)--(5,3.5);
				\draw (0,12.5)--(-5,7);
				\draw (0,12.5)--(-2,-.5);
				\draw (0,12.5)--(2,-.5);
				\draw (0,12.5)--(-5,12.5);
				\draw (0,12.5)--(5,12.5);
				\draw[->] [draw=red,  thick] (-2.5,0.7)--(-1,0.3);
				\draw[->] [draw=blue,  thick] (2.5,0.7)--(1,0.3);
				\draw[->] [draw=blue,  thick] (-3.5,2.5)--(-3.5,2);
				\draw[->] [draw=red,  thick] (3.5,2.5)--(3.5,2);
				\draw[->] [draw=blue,  thick] (-3.5,5)--(-3.5,4.5);
				\draw[->] [draw=blue,  thick] (2.5,5)--(3,4.5);
				\draw[->] [draw=red,  thick] (4.5,5)--(4,4.5);
				\draw[->] [draw=red,  thick] (-2.5,7.5)--(-3,7);
				\draw[->] [draw=blue,  thick] (-4.5,7.5)--(-4,7);
				\draw[<-] [draw=blue,  thick] (4.5,7)--(4,7.5);
				\draw[<-] [draw=red,  thick] (2.5,7)--(3,7.5);
				\draw[->] [draw=blue,  thick] (-3,10)--(-2.5,9.5);
				\draw[->] [draw=red,  thick] (-4,10)--(-4.5,9.5);
				\draw[->] [draw=blue,  thick] (3.5,10)--(3.5,9.5);
				\draw[->] [draw=red,  thick] (-3.5,12.5)--(-3.5,12);
				\draw[->] [draw=blue,  thick] (3.5,12.5)--(3.5,12);
				\draw[->] [draw=blue,  thick] (-1,14.2)--(-2.5,13.8);
				\draw[->] [draw=red,  thick] (1,14.2)--(2.5,13.8);
				\end{tikzpicture}
			}$	

			\noindent	The crystal is split into several parts. Each one of those parts is a Demazure atom and contains exactly one symplectic key tableau, so we can identify each part with the weight of that key tableau, which is a vector in the $B_2$-orbit of $(2,1)$. From Theorem \ref{DemKey} we have that all tableaux in the same part have the same right key.
			
			\noindent	One can check that, for example\\ $\mathfrak{U}((1,\overline{2}))=\left\{\YT{0.17 in}{}{{{1},{\overline{2}}},{{2}}}, \YT{0.17 in}{}{{{1},{\overline{2}}},{{\overline{2}}}}\right\}=\widehat{\mathfrak{B}}_{\lambda s_1s_2}$.
			
			\noindent	Also,
			
			\noindent $\!\mathfrak{B}_{(1,\overline{2})}\!\!=\!\!\{T\!\!\in\!\!\mathfrak{B}^\lambda\mid\!\! K_+(T)\!\leq\! K((1,\overline{2}))\}=\left\{\YT{0.17 in}{}{{{1},{1}},{{2}}}, \YT{0.17 in}{}{{{1},{2}},{{2}}}, \YT{0.17 in}{}{{{1},{1}},{{\overline{2}}}}, \YT{0.17 in}{}{{{1},{\overline{2}}},{{2}}}, \YT{0.17 in}{}{{{1},{\overline{2}}},{{\overline{2}}}}\right\}$.\end{multicols}
	\end{example}
	
	
	\section{Realization of the Lusztig involution in types $A$ and $C$}\label{SecLusztig}
	Let $\mathfrak{B}^\lambda$ be the crystal with set $\mathcal{KN}(\lambda, n)$ (respectively $SSYT(\lambda, n)$).
	\begin{definition}
		The Lusztig involution $L:\mathfrak{B}^\lambda\rightarrow\mathfrak{B}^\lambda$ is the only involution such that for all $i\in I$ ($I=[n-1]$ in type $A_{n-1}$ and $I=[n]$ in type $C_n$):
		\begin{enumerate}
			\item $\text{wt}(L(x))=\omega_0(\text{wt}(x))$,  where $\omega_0$ is the longest element of the Weyl group;
			\item $e_i(Lx)=L(f_{i'}(x))$ and $f_i(Lx)=L(e_{i'}(x))$ where $i'$ is such that $\omega_0(\alpha_i)=-\alpha_{i'}$ and $\alpha_i$ is the $i$-th simple root;
			\item $\varepsilon_i(Lx)=\varphi_{i'}(x)$ and $\varphi_i(Lx)=\varepsilon_{i'}(x)$.\end{enumerate}
	\end{definition}
	
	For type $A$ we have that $\omega_0$ is the reverse permutation and $i'=n-i$, and for type $C_n$ we have  $\omega_0=-\text{Id}$ and $i'=i$, where $\text{Id}$ is the identity map.
	In type $C_n$ the involution can be seen as flipping the crystal upside down.

	\begin{definition} \cite{BSch 17}
		Let $\mathfrak{C}$ be a  connected component  in the type $C_n$ crystal $G_n$. The dual crystal $\mathfrak{C}^\vee$  is the crystal obtained from $\mathfrak{C}$ after reversing the direction of all arrows. Also, the if $x\in\mathfrak{C}$, then for its correspondent in $\mathfrak{C}^\vee$, $x^\vee$, we have $\text{wt}(x)=-\text{wt}(x^\vee)$.
	\end{definition}
	In type $C$, since $i'=i$ and $\omega_0=-\text{Id}$, it follows from the definition that $\mathfrak{C}$ and $\mathfrak{C}^\vee$, as crystals in $G_n$, have the same highest weight. Therefore, they are isomorphic. In the case of $\mathfrak{B}^\lambda$, with set $\mathcal{KN}(\lambda, n)$, the Lusztig involution  is  a realization of the dual crystal. Hence the crystal $\mathfrak{B}^\lambda$ with set $\mathcal{KN}(\lambda, n)$ is self-dual. We shall see other realizations of the dual.

	\subsection{Evacuation algorithms}
	In type $A_{n-1}$, the Lusztig involution on the crystal with set $SSYT(\lambda, n)$ is known as Schützenberger involution or evacuation, ${Ev}$, and takes $T\!\in SSYT(\lambda, n)$ to $T^{Ev}\!\in\! SSYT(\lambda, n)$, whose weight is $\omega_0(\text{wt}\,T)$, where $\omega_0$ is the longest permutation of $\mathfrak{S}_n$, in the Bruhat order. Note that $\omega_0(\text{wt}\,T)$ is the vector $\text{wt}\,T$ in reverse order, i.e., $\omega_0(v_1, \dots, v_n)=(v_n,\dots, v_1)$.
	In type $C_n$ we will work with KN tableaux instead of SSYTs. Consider $T\in \mathcal{KN}(\lambda,n)$. In this case, $T^{{Ev}}\in \mathcal{KN}(\lambda,n)$ and $\text{wt}\, T=-\text{wt}\, T^{{Ev}}=\omega_0^C(\text{wt}\, T^{{Ev}})$, where $\omega_0^C$ is the longest permutation of $B_n$.
	The complement of a tableau or a word in types $A_{n-1}$ or $C_n$ consists in applying $\omega_0$ or $\omega_0^C$, respectively, to all its entries. In type $A_{n-1}$, it sends $i$ to $n+1-i$ for all $i\in[n]$, i.e., $\omega_0(i)=n+1-i$ and in type $C_n$ we have $\omega_0(i)=-i$. 
	Given a SSYT, there are several algorithms, due to Schützenberger, to obtain a SSYT with the same shape whose weight is its reverse. We recall some versions of them for which one is able to find analogues for KN tableaux.

	\begin{algorithm}\label{rsi}\indent
		\begin{enumerate}
			\item Define $w=cr(T)$.
			\item Define $w^\star$ the word obtained by complementing its letters and writing it backwards.
			\item $T^{Ev}:=P(w^\star)$.
		\end{enumerate}
	\end{algorithm}
	\begin{example}
		In type $A$, the tableau $T=\YT{0.145in}{}{
			{{1},{1},{2},{3}},
			{{2},{3},{3}},
			{{4}}}$ has reading $w=32313124$. Then $w^\star=13424232$, and the  column insertion of this word is
		$T^{Ev}=\YT{0.145in}{}{
			{{1},{2},{2},{3}},
			{{2},{4},{4}},
			{{3}}}$.

		In type $C$, consider the KN tableau $T=\YT{0.17in}{}{
			{{1},{3},{\overline{1}}},
			{{3},{\overline{3}}},
			{{\overline{3}}}}$. 
		Then, $w=cr(T)=\overline{1}3\overline{3}13\overline{3}$ and $w^\star=3\overline{3}\overline{1}3\overline{3}\overline{3}1$.
		So now we insert $w^\star$, obtaining the following sequence of tableaux:
		
		$\YT{0.17in}{}{
			{{{3}}}}
		\YT{0.17in}{}{
			{{3}},
			{{\overline{3}}}}
		\YT{0.17in}{}{
			{{3}},
			{{\overline{3}}},
			{{\overline{1}}}}
		\YT{0.17in}{}{
			{{2},{\overline{2}}},
			{{3}},
			{{\overline{1}}}}
		\YT{0.17in}{}{
			{{2},{\overline{2}}},
			{{3}, {\overline{1}}},
			{{\overline{3}}}}
		\YT{0.17in}{}{
			{{1},{2},{\overline{2}}},
			{{3}, {\overline{1}}},
			{{\overline{3}}}}=P(w^\star)$.
	\end{example}
	
	\begin{algorithm}\label{rcjt}\indent
		\begin{enumerate}
			\item Define $T^0:=\text{complement}(\text{$\pi$-rotate} (T))$.
			\item $T^{Ev}:=$ rectification of $T^0$.
		\end{enumerate}
	\end{algorithm}
	
	\begin{example}
		In type $A$, consider the tableau $T=\YT{0.145in}{}{
			{{1},{1},{2},{3}},
			{{2},{3},{3}},
			{{4}}}$. After $\pi$-rotation and complement we have the skew tableau
		$T^0=\SYT{0.145in}{}{
			{{1}},
			{{3},{2},{2}},
			{{4},{4},{3},{2}}}$ which, after rectification, gives the tableau
		$T^{Ev}=\YT{0.145in}{}{
			{{1},{2},{2},{3}},
			{{2},{4},{4}},
			{{3}}}$.
		
		In type $C$, consider the KN tableau $T=\YT{0.17in}{}{
			{{1},{3},{\overline{1}}},
			{{3},{\overline{3}}},
			{{\overline{3}}}}$. 	
		Then, $T_0=\SYT{0.17in}{}{
			{{3}},
			{{\overline{3}},{3}},
			{{\overline{1}},{\overline{3}}, {1}}}$.	
		So now we have to rectify this skew tableau obtaining	
		$T^{Ev}=\YT{0.17in}{}{
			{{1},{2},{\overline{2}}},
			{{3}, {\overline{1}}},
			{{\overline{3}}}}.$
	\end{example}
	Given a KN (SSYT) tableau $T$, the algorithm characterize $T^{Ev}$ as the unique KN tableau Knuth equivalent to $\text{wt}(T)^\star$ and coplactic equivalent do $T$.
	
	In both Cartan types we have that algorithms \ref{rsi} and \ref{rcjt} produce the same tableau since the column reading of $T^0$ is $w^\star$, $P(w^\star)=rect(T^0)=rect(w^\star)$, assuming that, in type $C_n$, $T^0$ is admissible. This can be concluded using the following lemma.
	\begin{lemma}\label{rotatestar}
		For type $C_n$, the split of a column $C$, $(\ell C, rC)$ is the rotation and complement of the split of the column $C^0=\text{complement}(\text{$\pi$-rotate}(C))$, $(\ell C^0, rC^0)$.
	\end{lemma}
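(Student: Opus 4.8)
The plan is to reduce the lemma to a straightforward bookkeeping about the combinatorial data entering the definition of the split, since the operation $C\mapsto C^0=\mathrm{complement}(\text{$\pi$-rotate}(C))$ interacts transparently with every ingredient of that definition. First I would spell out what ``rotation and complement'' does to the two-column array $(\ell C\mid rC)$: a $\pi$-rotation of a $k\times 2$ array exchanges its two columns and reverses each of them, so $\mathrm{complement}(\text{$\pi$-rotate}(\cdot))$ applied to $(\ell C\mid rC)$ produces the array whose left column is $\mathrm{complement}(\text{$\pi$-rotate}(rC))$ and whose right column is $\mathrm{complement}(\text{$\pi$-rotate}(\ell C))$. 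As this composite is an involution, the lemma is equivalent to the two column identities $\ell C^0=\mathrm{complement}(\text{$\pi$-rotate}(rC))$ and $rC^0=\mathrm{complement}(\text{$\pi$-rotate}(\ell C))$. I would also record that, for a single column, $\mathrm{complement}\circ\text{$\pi$-rotate}$ is nothing but ``take the underlying set of letters, complement it entrywise, and display it in increasing order'', because $\alpha\mapsto\overline\alpha$ reverses the order on $[\pm n]$; hence it is enough to verify the two identities at the level of underlying sets of letters.

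Next I would isolate the key observation: the splitting data of $C^0$ literally coincides with that of $C$. Writing $I=\{z_1>\dots>z_r\}$ for the set of unbarred $z$ with $(z,\overline z)$ occurring in the column and $J=\{t_1>\dots>t_r\}$ for the associated greedy set, one has $\{z,\overline z\}\subseteq C\iff\{z,\overline z\}\subseteq C^0$, so $I$ is the same for $C$ and $C^0$; and the recursive definition of $J$ refers only to the usual order on $[n]$, which is untouched, and to the conditions $t_i\notin C$, $\overline{t_i}\notin C$, which are equivalent to $t_i\notin C^0$, $\overline{t_i}\notin C^0$ since $t\in C^0\iff\overline t\in C$. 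Thus the greedy construction produces the same $J$ for $C^0$ as for $C$; in particular it succeeds for $C^0$, so $C^0$ is admissible whenever $C$ is, by \cite[Lemma 3.1]{She 99} (this is the admissibility claim needed to justify Algorithm \ref{rcjt} in type $C$).

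Finally I would push the complement through the set descriptions of the split. Setting $\overline I=\{\overline{z_1},\dots,\overline{z_r}\}$ and $\overline J=\{\overline{t_1},\dots,\overline{t_r}\}$, one has $\ell C=(C\setminus I)\cup J$, $rC=(C\setminus\overline I)\cup\overline J$, and likewise $\ell C^0=(C^0\setminus I)\cup J$, $rC^0=(C^0\setminus\overline I)\cup\overline J$ by the previous step. Complementing, $\mathrm{complement}(\ell C)=(C^0\setminus\overline I)\cup\overline J=rC^0$ and $\mathrm{complement}(rC)=(C^0\setminus I)\cup J=\ell C^0$ as sets of letters, which is exactly the pair of identities singled out above, and the lemma follows. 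The only point genuinely requiring care is the claim that $J$ is unchanged: one must run through its recursion step by step and confirm that each $t_i$ is pinned down using only comparisons in the order on $[n]$ together with the symmetric non-membership conditions; everything else is routine.
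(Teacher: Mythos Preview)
Your proof is correct and follows essentially the same route as the paper's. Both arguments reduce the lemma to the pair of identities $(rC)^0=\ell C^0$ and $(\ell C)^0=rC^0$, and both rest on the observation that the splitting data---the set $I$ of symmetric pairs and the greedy set $J$---is identical for $C$ and $C^0$. The paper phrases this via a block decomposition of each column into its unbarred and barred parts $A,B$ and argues that $B^{0\prime}=B^{\prime 0}$, $A^{0\prime}=A^{\prime 0}$; you instead write $\ell C=(C\setminus I)\cup J$, $rC=(C\setminus\overline I)\cup\overline J$ and push the complement through these set formulas. Your version is a bit more explicit about why the $J$-recursion is unchanged, but the underlying idea is the same.
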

	\begin{proof}
		Let's say that $(\ell C, rC)=\YT{0.17in}{}{
			{{A'},{A}},
			{{B},{B'}}}$ where $C=\YT{0.17in}{}{
			{{A}},
			{{B}}}$, $\ell C=\YT{0.17in}{}{
			{{A'}},
			{{B}}}$ and $rC=\YT{0.17in}{}{
			{{A}},
			{{B'}}}$,
		where $A$ and $A'$ are the unbarred letters of the columns $C$ and $\ell C$, respectively, and $B$ and $rB$ are the barred letters of $C$ and $rC$, respectively. Note that $\ell C$ and $C$ share the barred part and $C$ and $rC$ share the unbarred part.
		
		We have that $C^0=\YT{0.2in}{}{
			{{B^0}},
			{{A^0}}}$ and its split $(\ell C^0, rC^0)=\YT{0.2in}{}{
			{{B^{0'}},{B^0}},
			{{A^0},{A^{0'}}}}$.
		Ignoring bars and counting multiplicities, the letters that appear in $C$ and $C^0$ are the same. Hence $B^{0'}$ has the same letters as $B'$, but they appear unbarred, hence $B^{0 '}=B'^0$. The same happens with $A^{0'}$ and $A'^0$.
		Now it is easy to see that $(\ell C^0, rC^0)$ is obtained from $(\ell C, rC)$ rotating and complementing. In particular $(rC)^0=\ell C^0$ and $(\ell C)^0=rC^0$.	
	\end{proof}

	We now set the Cartan type to be $C$. Given a word $w\in[\pm n]^\ast$, we define the $w^\star$ like in the Algorithm \ref{rsi} and show that the map $^\star$ preserves Knuth equivalence. 
	
	\begin{theorem}\label{star preserves Knuth}
		Let $v, w \in [\pm n]^\ast$. Then $v\sim w$ if and only if $v^\star\sim w^\star$.
	\end{theorem}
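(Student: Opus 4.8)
The plan is to reduce the statement to a finite check on the five elementary Knuth relations K1--K5. First note that $\star$ is an involution on $[\pm n]^\ast$: complementing letters twice and reversing twice each return the original word, so $(u^\star)^\star=u$. Hence it suffices to prove one implication, say that $v\sim w$ implies $v^\star\sim w^\star$; the converse is the same statement applied to $v^\star,w^\star$. By definition of Knuth equivalence there is a finite chain $v=u_0,u_1,\dots,u_k=w$ in which each $u_{j+1}$ arises from $u_j$ by replacing one factor according to one of K1--K5. If $u_j=X\,p\,Y$ and $u_{j+1}=X\,p'\,Y$, then $u_j^\star=Y^\star\,p^\star\,X^\star$ and $u_{j+1}^\star=Y^\star\,(p')^\star\,X^\star$, so $(p')^\star$ sits in $u_{j+1}^\star$ in the position mirroring that of $p^\star$ in $u_j^\star$. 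Thus it is enough to show that for every instance $p\sim p'$ of a relation K1--K5, the unordered pair $(p^\star,(p')^\star)$ is again an instance of one of K1--K5; then $u_0^\star,\dots,u_k^\star$ witnesses $v^\star\sim w^\star$. The key elementary fact, used throughout, is that complementation $i\mapsto\overline i$ reverses the order $1<\cdots<n<\overline n<\cdots<\overline 1$, preserves absolute values, and sends a symmetric pair $\{x,\overline x\}$ to itself.

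Next I would run through the local relations. For K1, the instance $\gamma\beta\alpha\sim\beta\gamma\alpha$ with $\gamma<\alpha\le\beta$ and $(\beta,\gamma)\ne(\overline x,x)$ is carried by $\star$ to the pair $\overline\alpha\,\overline\beta\,\overline\gamma$ versus $\overline\alpha\,\overline\gamma\,\overline\beta$. Since complementation reverses the order we get $\overline\beta\le\overline\alpha<\overline\gamma$, and the excluded condition becomes $(\overline\gamma,\overline\beta)\ne(\overline x,x)$, which is again $(\beta,\gamma)\ne(\overline x,x)$; so this is precisely an instance of K2, obtained by the substitution $\alpha\mapsto\overline\alpha$, $\beta\mapsto\overline\gamma$, $\gamma\mapsto\overline\beta$ in the template of K2. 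By involutivity of $\star$, K2 is likewise sent to K1. For K3, the instance $y+1\,\overline{y+1}\,\beta\sim\overline y\,y\,\beta$ with $y<\beta<\overline y$ and $y\in[n-1]$ is carried to $\overline\beta\,(y+1)\,\overline{y+1}$ versus $\overline\beta\,\overline y\,y$; since $y<\overline\beta<\overline y$ still holds, this is an instance of K4 with the same $y$ and with $\beta$ replaced by $\overline\beta$, and symmetrically K4 is sent back to K3.

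The remaining case, K5, is the one I expect to be the main obstacle, since it is not a local rewriting but encodes the one column condition. Here I would show that if $w$ is a non-admissible column breaking the $1CC$ at $z$ with every proper factor admissible, then $w^\star$ has all the same properties, with the same $z$. Indeed $w^\star$ is again strictly increasing (complementation reverses the alphabet order and the reversal undoes it); admissibility of an arbitrary column $C$ is equivalent to that of $C^\star$, because complementing a column of length $m$ exchanges, for each symmetric pair, the "$a$-th row from the top" with the "$a$-th row from the bottom", so the inequality $a+b\le i$ of Definition \ref{1CC} is symmetric in $a,b$ and is preserved; and since the multiset of absolute values of $w^\star$ equals that of $w$ and symmetric pairs correspond, the minimal breaking index $z$ is unchanged. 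The factors of $w^\star$ are exactly the $\star$-images of the factors of $w$, so "all proper factors admissible" transfers as well, and $(w\setminus\{z,\overline z\})^\star=w^\star\setminus\{z,\overline z\}$ since $\{z,\overline z\}$ is $\star$-stable. Hence K5 applies directly to $w^\star$. Assembling the five cases yields $v\sim w\Rightarrow v^\star\sim w^\star$, and applying this to $v^\star,w^\star$ together with $(u^\star)^\star=u$ gives the equivalence.
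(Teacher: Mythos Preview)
Your proposal is correct and takes essentially the same approach as the paper: both reduce to a case check showing that $\star$ exchanges K1 with K2 and K3 with K4 and preserves K5. You streamline slightly by invoking the involutivity of $\star$ up front to avoid checking K2 and K4 separately, and you spell out more carefully than the paper why a column, its admissibility, and its $1CC$-breaking index are preserved under $\star$; the paper simply asserts this last point as ``clear to see''.
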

	\begin{proof}
		It is enough to consider $v$ and $w$ only one Knuth relation apart, because all other cases are obtained by composing multiple Knuth relations. It is enough to consider each transformation applied in one direction, since the other direction is the same case, after swapping the roles of $v$ and $w$.
		\begin{enumerate}
			\item[K1] Consider $v=v_p\gamma \beta \alpha v_s$, with $\gamma <\alpha\leq \beta$ and $(\beta, \gamma)\neq(\overline{x},x)$, where $v_p$ is a prefix of $v$, $v_s$ is a suffix of $v$, and $\gamma \beta \alpha$ are three consecutive letters of $v$. Then, $v\stackrel{K1}{\sim}w=v_p\beta\gamma  \alpha v_s$.
			Note that $v^\star=v_s^\star \overline{\alpha} \overline{\beta} \overline{\gamma}  v_p^\star$ and $w^\star=v_s^\star \overline{\alpha} \overline{\gamma} \overline{\beta}  v_p^\star$, with $(\overline{\gamma}, \overline{\beta}) \neq (\overline{x}, x)$ and $\overline{\beta}\leq \overline{\alpha}<\overline{\gamma}$. 
			Hence $v^\star\stackrel{K2}{\sim}w^\star$, so they are Knuth related.

			\item[K2] Consider $v=v_p\alpha \beta \gamma  v_s$, with $\gamma \leq\alpha< \beta$ and $(\beta, \gamma)\neq(\overline{x},x)$, where $v_p$ is a prefix of $v$, $v_s$ is a suffix of $v$, and $\alpha \beta \gamma$ are three consecutive letters of $v$. Then, $v\stackrel{K2}{\sim}w=v_p\alpha\gamma  \beta v_s$.
			Note that $v^\star=v_s^\star \overline{\gamma} \overline{\beta} \overline{\alpha}  v_p^\star$ and $w^\star=v_s^\star \overline{\beta} \overline{\gamma} \overline{\alpha}  v_p^\star$, with $(\overline{\gamma}, \overline{\beta}) \neq (\overline{x}, x)$ and $\overline{\beta}< \overline{\alpha}\leq\overline{\gamma}$. 
			Hence $v^\star\stackrel{K1}{\sim}w^\star$, so they are Knuth related.

			\item[K3]  Consider $v=v_p (y+1) \overline{y+1} \beta  v_s$, with $y <\beta< \overline{y}$, where $v_p$ is a prefix of $v$, $v_s$ is a suffix of $v$, and $(y+1) \overline{y+1} \beta$ are three consecutive letters of $v$. Then, $v\stackrel{K3}{\sim}w=v_p\overline{y}y\beta v_s$.
			Note that $v^\star=v_s^\star \overline{\beta} (y+1) \overline{y+1}  v_p^\star$ and $w^\star=v_s^\star \overline{\beta}\overline{y}y  v_p^\star$, with  $y <\beta< \overline{y}$. 
			Hence $v^\star\stackrel{K4}{\sim}w^\star$, so they are Knuth related.

			\item[K4] Consider $v=v_p  \alpha \overline{x}x   v_s$, with $x <\alpha< \overline{x}$, where $v_p$ is a prefix of $v$, $v_s$ is a suffix of $v$, and $\alpha \overline{x}x$ are three consecutive letters of $v$. Then, $v\stackrel{K4}{\sim}w=v_p\alpha(x+1)\overline{x+1} v_s$.		
			Note that $v^\star=v_s^\star \overline{x} x \overline{\alpha}  v_p^\star$ and $w^\star=v_s^\star (x+1)\overline{x+1}\overline{\alpha}  v_p^\star$, with  $x <\alpha< \overline{x}$. 
			Hence $v^\star\stackrel{K3}{\sim}w^\star$, so they are Knuth related.
			
			\item[K5] Consider $w$ and $\{z, \overline{z}\}\in w$ such that $w\stackrel{K5}{\sim}w\setminus\{z,\overline{z}\}$. It is clear to see that a word $v$ breaks the $1CC$ at $z$ if and only if $v^\star$ breaks the $1CC$ at $z$. So, if $w$ is non admissible and all its factors are admissible then the same will happen to $w^\star$, because all of its factors are obtained after applying $^\star$ to a factor of $w$. So we have that $w^\star\stackrel{K5}{\sim}w^\star\setminus\{z,\overline{z}\}$.
		\end{enumerate}
		Hence the word operator $^\star$ preserves Knuth equivalence.	
	\end{proof}
	
	Consider a KN tableau $T$ with column reading $w$. The column reading of the tableau obtained after applying Algorithm \ref{rsi} to $T$ is Knuth-related to $w^\star$, because both give the same tableau if inserted. Since $^\star$ is an involution ($(w^\star)^\star=w$), if we apply the algorithm again we will get a tableau whose column reading, by the last theorem, is Knuth equivalent to $(w^\star)^\star=w$, hence we will have $T$ again. So Algorithm \ref{rsi} is an involution. Next we conclude that algorithms \ref{rsi} and \ref{rcjt} is a realization of the Lusztig involution for type $C$. 
	
	\begin{theorem} Let $w\in [\pm n]^\ast$.
		The connected component of the crystal $G_n$ that contains the word $w$ is isomorphic to the one that contains the word $w^\star$. Therefore $P(w)$ and $P(w^\star)$ have the same shape and weights of opposite sign. Moreover, the two crystals are dual of each other and the $^\star$ map is a realization of the dual crystal.
	\end{theorem}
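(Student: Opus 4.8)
The plan is to base everything on a single computation, after which the statement becomes formal. The key claim is that the word operator $^\star$ (complement every letter and reverse the word) intertwines the crystal structure of $G_n$ by interchanging the raising and lowering operators: for every $w\in[\pm n]^\ast$ and every $i\in[n]$, with the convention $0^\star=0$,
\[
\text{wt}(w^\star)=-\text{wt}(w),\qquad \varepsilon_i(w^\star)=\varphi_i(w),\qquad \varphi_i(w^\star)=\varepsilon_i(w),
\]
\[
e_i(w^\star)=(f_i(w))^\star,\qquad f_i(w^\star)=(e_i(w))^\star.
\]

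First I would establish these identities via the signature rule of Subsection \ref{crystal}. The weight identity is immediate, since complementing a letter negates its weight and the order of the letters does not affect the weight. Fixing $i$, one checks from the definition of the $i$-signature that a letter $x$ of $w$ contributes $+$ (resp.\ $-$, resp.\ nothing) precisely when $\overline x$ contributes $-$ (resp.\ $+$, resp.\ nothing); since the letters of $w^\star$ occur in reverse order, the reduced $i$-signature of $w^\star$ is obtained from that of $w$ by reversing it and flipping every sign. Hence the bracketing of $+-$ pairs is transported, the uncancelled block $-^{\varepsilon_i(w)}+^{\varphi_i(w)}$ of $w$ becomes the uncancelled block $-^{\varphi_i(w)}+^{\varepsilon_i(w)}$ of $w^\star$, and the letter on which $f_i$ acts in $w$ (the leftmost uncancelled $+$) is carried by $^\star$ to the letter on which $e_i$ acts in $w^\star$ (the rightmost uncancelled $-$). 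A short verification of the explicit action of $e_i$ as the inverse of $f_i$, including the case $i=n$, then yields $e_i(w^\star)=(f_i(w))^\star$; the remaining identity follows by applying this to $w^\star$ and using $(w^\star)^\star=w$. This signature bookkeeping is the only real work and the main obstacle, although it is entirely routine.

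Granting these relations, the rest is formal. They show that $^\star$ restricts to a bijection between the connected component $\mathfrak C$ of $w$ and the connected component $\mathfrak C^\star$ of $w^\star$: a sequence of operators $f_i,e_j$ taking $w$ to a given $x\in\mathfrak C$ is transported to a sequence of operators $e_i,f_j$ taking $w^\star$ to $x^\star$, and conversely. Comparing the relations above with the definition of the dual crystal shows that the map $x^\vee\mapsto x^\star$ is a crystal isomorphism from $\mathfrak C^\vee$ to $\mathfrak C^\star$ (the identities match $\text{wt}$, $e_i$, $f_i$, $\varepsilon_i$, $\varphi_i$ one by one). Thus $\mathfrak C$ and $\mathfrak C^\star$ are dual of each other and $^\star$ is a realization of the dual crystal.

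To finish, recall that the shape of $P$ is constant on a connected component, since words in the same component share their oscillating tableau, hence the final shape of $Q$, which is the shape of $P$. Moreover the highest weight word $u$ of $\mathfrak C$ satisfies $P(u)=K(\mu)$ with $\mu=\text{wt}(u)$ the highest weight, so the common $P$-shape on $\mathfrak C$ equals $\mu=\text{shape}\,P(w)$ and $\mathfrak C\cong\mathfrak B^{\mu}$. Since $\mathfrak B^{\mu}$ with set $\mathcal{KN}(\mu,n)$ is self-dual, $\mathfrak C^\star\cong\mathfrak C^\vee\cong(\mathfrak B^{\mu})^\vee\cong\mathfrak B^{\mu}\cong\mathfrak C$, so the two components are isomorphic; in particular they have the same highest weight, whence $\text{shape}\,P(w^\star)=\text{shape}\,P(w)$. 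Finally, because the Baker--Lecouvey insertion preserves weights, $\text{wt}\,P(w^\star)=\text{wt}(w^\star)=-\text{wt}(w)=-\text{wt}\,P(w)$, which gives the opposite weights. Restricting the whole picture to $\mathfrak B^{\mu}$ and invoking Algorithm \ref{rsi}, the induced involution $T\mapsto P(cr(T)^\star)$ on $\mathcal{KN}(\mu,n)$ realizes the dual crystal, that is, the Lusztig involution in type $C$.
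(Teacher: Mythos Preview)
Your proposal is correct and follows essentially the same approach as the paper: the core of both arguments is the signature-rule computation showing $(f_i(w))^\star=e_i(w^\star)$, from which duality, self-duality of $\mathfrak{B}^\mu$, and hence isomorphism of the two components follow. Your version is somewhat more detailed (you make the $\varepsilon_i/\varphi_i$ identities and the bijection between components explicit, and you derive the equality of shapes internally rather than citing \cite[Theorem 3.2.8]{Lec 02}), but the route is the same.
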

	\begin{proof} Remember the crystal operators $e_i$ and $f_i$ from the definition of crystal. Note that $(f_i(w))^\star=e_i(w^\star)$, because in the signature rule applied to $w$ and $w^\star$, the distance of the leftmost unbracketed $+$ of $w$ to the beginning of the word is equal to the distance of the rightmost unbracketed $-$ of $w^\star$ to the end of this word. Hence, the letter that changes when applying $f_i$ to $w$ is the complement of the letter that changes when applying $e_i$ to $w^\star$, and the letter obtained on their position after applying the crystal operators are also complement of each other. Hence the crystal that contains the word $w^\star$ is the dual to the one that contains $w$. But the crystal that contains $w$ is self-dual, hence the crystals that contains any of the words are isomorphic. From \cite[Theorem 3.2.8]{Lec 02} $P(w)$ and $P(w^\star)$ have the same shape.
	\end{proof}

	\subsection{Right and left keys and Lusztig involution}
	
	The next result shows that the right and left key maps defined for KN tableaux anticommutes with the Lusztig involution. The evacuation of the right key of a tableau is the left key of the evacuation of the same tableau.
	
	\begin{proposition}\label{key&Lusz}
		Let $T$ be a KN tableau and ${Ev}$ the type $C$ Lusztig involution. Then
		$$K_+(T)^{Ev}=K_-(T^{Ev}).$$
	\end{proposition}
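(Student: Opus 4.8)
The plan is to translate both sides of the identity into the skew-tableau description of the key maps and then to use Lemma~\ref{rotatestar} together with Theorem~\ref{star preserves Knuth}. Fix a KN tableau $T$ of shape $\lambda$. By Theorem~\ref{rightkeymap}, the columns of $K_+(T)$ are exactly the right columns $rC$ of the \emph{last} columns $C$ of the KN skew tableaux $S$ (those with the same multiset of column lengths as $T$, as in Proposition~\ref{Fultonices}) that rectify to $T$; dually, the columns of $K_-(T^{Ev})$ are the left columns $\ell D$ of the \emph{first} columns $D$ of the KN skew tableaux that rectify to $T^{Ev}$. The strategy is to match each such $S$ for $T$ with $S^{0}=\text{complement}(\pi\text{-rotate}(S))$ for $T^{Ev}$ and to keep track of the relevant column.

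First I would verify that $S\mapsto S^{0}$ is a well-defined involution between these two families of skew tableaux. The shape of $S^{0}$ is the $\pi$-rotate of the shape of $S$, hence a skew shape with the same (reversed) multiset of column lengths. Since $spl(S^{0})$ is the complement of the $\pi$-rotate of $spl(S)$ by the columnwise form of Lemma~\ref{rotatestar}, and $spl(S)$ is semistandard, $spl(S^{0})$ is semistandard, so $S^{0}$ is a KN skew tableau (in particular all of its columns are admissible). Moreover $cr(S^{0})=cr(S)^\star$, and since $S$ rectifies to $T$ we have $P(cr(S))=T=P(cr(T))$, i.e. $cr(S)\sim cr(T)$; by Theorem~\ref{star preserves Knuth} this gives $cr(S^{0})=cr(S)^\star\sim cr(T)^\star$, so by Proposition~\ref{rr} and Algorithm~\ref{rsi} the rectification of $S^{0}$ is $P(cr(T)^\star)=T^{Ev}$. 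Finally, $\pi$-rotation reverses the order of the columns, so the first column of $S^{0}$ is $C^{0}$, where $C$ is the last column of $S$.

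Now I would combine this with Lemma~\ref{rotatestar}: if $C$ is the last column of $S$, then $\ell(C^{0})=(rC)^{0}$, where for a single column $(\cdot)^{0}$ is just ``complement every entry and re-sort''. Letting $S$ run over the skew tableaux associated to $T$ (which realize all distinct column lengths of $T$ as last-column lengths, by Corollary~\ref{lastcolumn} and its left-hand analogue), it follows that the multiset of columns of $K_-(T^{Ev})$ is obtained from the multiset of columns of $K_+(T)$ by complementing every entry. Writing $K_+(T)=K(v)$ with $v=\text{wt}\,K_+(T)$ (Proposition~\ref{UniKw}), the $j$-th column of $K(v)$ is the set $\{\mathrm{sgn}(v_i)\,i:\ |v_i|\ge j\}$, and complementing turns it into $\{\mathrm{sgn}(-v_i)\,i:\ |{-v_i}|\ge j\}$, which is the $j$-th column of $K(-v)$. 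Since a key tableau is determined by its multiset of columns, $K_-(T^{Ev})=K(-v)$.

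It remains to identify $K_+(T)^{Ev}=K(v)^{Ev}$. Evacuation preserves the shape $\lambda$, and by item~(1) of the definition of the Lusztig involution it sends weight $v$ to $\omega_0^C(v)=-v$; thus $K(v)^{Ev}$ is a KN tableau of shape $\lambda$ and weight $-v\in B_n\lambda$. Because $-v$ is an extremal weight of $\mathfrak{B}^\lambda$, the tableau $K(v)^{Ev}$ is forced to be a key tableau: $K(v)$ is reached from the highest weight element $K(\lambda)$ by a maximal chain of lowering operators along a reduced word of $\sigma_v$, so $K(v)^{Ev}$ is reached from the lowest weight element $K(\lambda)^{Ev}=K(-\lambda)$ by the matching raising operators, hence is again a key. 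By the uniqueness in Proposition~\ref{UniKw}, $K(v)^{Ev}=K(-v)=K_-(T^{Ev})$, which is the claim. I expect the main obstacle to be the bookkeeping of the second paragraph: showing that $S\mapsto S^{0}$ is genuinely a (shape-reversing) bijection between the exact families of skew tableaux that compute $K_+(T)$ and $K_-(T^{Ev})$, with the last column of $S$ sent to the first column of $S^{0}$; this is where Lemma~\ref{rotatestar}, Theorem~\ref{star preserves Knuth}, Proposition~\ref{rr} and Algorithm~\ref{rsi} have to be dovetailed. The final identification $K(v)^{Ev}=K(-v)$ is comparatively routine once the extremal-weight observation is in place.
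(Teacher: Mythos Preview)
Your argument is correct and follows essentially the same route as the paper's proof. The only cosmetic difference is that the paper phrases everything in terms of frank words rather than skew tableaux: it picks a frank word $w$ for $T$ with first column $w_k$, observes via Theorem~\ref{star preserves Knuth} that $w^\star$ is a frank word for $T^{Ev}$ with last column $w_k^\star$, and then applies Lemma~\ref{rotatestar} to get $\ell(w_k^\star)=(rw_k)^\star$. Your map $S\mapsto S^0$ on skew tableaux is the same move after passing through the equivalence in the Remark following Theorem~\ref{rightkeymap}. The paper is also terser at the end: it simply notes that key tableaux are determined by their weights and that the weights of $K_+(T)$ and $K_-(T^{Ev})$ are opposite, leaving implicit the fact (which you spell out) that $K(v)^{Ev}=K(-v)$ because extremal weights have multiplicity one in $\mathfrak{B}^\lambda$.
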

	\begin{proof}
		Since the tableaux $K_+(T)$ and $K_-(T^{Ev})$ are key tableaux, they are completely determined by their weights. Then we just need to prove that their weights are symmetric.
		
		Fix a column $C$ of $K_+(T)$. There is a frank word $w$, Knuth related to $cr(T)$, such that $C$ is the right column of the first column of $w$. Let's say the $w_k$ is the first column of $w$.
		From Proposition \ref{star preserves Knuth}, $w^\star$ is Knuth related to $cr(T)^\star$, hence $P(w^\star)=T^{Ev}$. Also note that the $w^\star$ has the same number of columns of each length as $w$, hence it is a frank word, and its last column is $w_k^\star$.
		Note that Lemma \ref{rotatestar} implies that if $v$ is an admissible column, then $l(v^\star)=(rv)^\star$. So we have that $l(w_k^\star)=(rw_k)^\star$ is a column of $K_-(T^{Ev})$.
		Therefore, for each column $C$ of $K_+(T)$ there is a column of $K_-(T^{Ev})$ whose weight is $\omega_0(\text{wt}(C))$, hence $K_+(T)$ and $K_-(T^{Ev})$ have symmetric weights.
	\end{proof}

	\section{Final Remarks}
	In \cite{Mas 09}, Mason   showed that Demazure atoms  are specializations of
	non-symmetric Macdonald polynomials of  type $A$ with $q = t = 0$. This allowed to use the shapes of semi-skyline augmented fillings,  in the combinatorial formula of non-symmetric Macdonald polynomials \cite{HHL 08}, which are in bijection with semi standard Young tableaux, to detect  the right keys. It would be interesting to obtain a similar object for a KN tableau in type $C$. For example, semi-skyline augmented fillings have been instrumental to obtain a RSK type bijective proof \cite{AzE 15} for the Lascoux non-symmetric Cauchy identity in type $A$ \cite{Las 03}. Such a generalization of skyline fillings for type $C$ could also lead to  a combinatorial formula for some specialization of nonsymmetric Macdonald polynomials in type $C$.
	
	In \cite{Willis 13}, Willis presents a direct algorithm to compute right keys in type $A$. It would be interesting to find something similar for type $C$. 
	
	In type $A$, key polynomials can also be described in terms of Kohnert diagrams \cite{Assaf 17, AssafSealer 19,Kohnert 90}. It would also be interesting to find an analogous description for type $C$.

	
	\subsection*{Acknowledgements}
	
	This work was partially supported by the Center for Mathematics of the  University of Coimbra - UID/MAT/00324/2019, funded by the Portuguese Government through FCT/MEC and co-funded by the European Regional Development Fund through the Partnership Agreement PT2020. It was also supported by FCT, through the grant PD/BD/142954/2018, under POCH funds, co-financed by the European Social Fund and Portuguese National Funds from MEC.
	
	I am grateful to C. Lenart, for his observations on the alcove path model, and to O. Azenhas, my Ph.D. advisor, for her help on the preparation of this paper. Also I wish to thank  the anonymous referees  for their careful reading and helpful comments on the manuscript.
	

\end{document}